\documentclass[11pt, a4paper]{amsart}

\usepackage[utf8]{inputenc}
\usepackage[english]{babel}

\usepackage{enumerate}
\usepackage{color}
\usepackage{stmaryrd}
\usepackage[text={14.06cm,20.84cm},centering]{geometry}
\usepackage{marginnote} \usepackage{framed}

\usepackage{xcolor}
\usepackage{esint,amssymb,amsmath}
\usepackage{graphicx}
\usepackage{mathtools}
\usepackage[colorlinks=true, pdfstartview=FitV, linkcolor=blue, citecolor=blue, urlcolor=blue,pagebackref=false]{hyperref}
\usepackage{microtype}

\usepackage{bm}
\usepackage{scalerel} 
\usepackage{mathrsfs}
\usepackage{tikz}   
\usepackage[all]{xy} \xyoption{arc} \xyoption{color}
\usepackage{epsfig}
\usepackage{amsbsy}
\usepackage[font={footnotesize}]{caption}

\usepackage{enumitem}

\usepackage{comment}

\newcommand{\eps}{\varepsilon}

\newtheorem{theorem}{Theorem}
\newtheorem{proposition}{Proposition}
\newtheorem{lemma}[proposition]{Lemma}

\theoremstyle{remark}
\newtheorem{remark}[proposition]{Remark}

\theoremstyle{definition}
\newtheorem{definition}[proposition]{Definition}

\numberwithin{equation}{section}
\numberwithin{proposition}{section}
\title[Thin film approximations of one-phase unstable Muskat]{Rigorous thin film approximations of the one-phase unstable Muskat problem}
\author{Edoardo Bocchi and Francisco Gancedo}

\begin{document}

\begin{abstract}
This paper studies the one-phase Muskat problem driven by gravity and surface tension. The regime considered here is unstable with the fluid on top of a dry region. 
By a novel approach using a depth-averaged formulation, we derive two asymptotic approximations for this scenario. The lower order approximation is the classical thin film equation, while the higher order approximation provides a new refined thin film equation. We prove the optimal order of convergence in the shallowness parameter to the original Muskat solutions for both models with low-regular initial data.
\end{abstract}

\maketitle
\section{introduction}

We study in this paper the one-phase Muskat problem, which describes the motion of an incompressible viscous fluid in a porous medium moving by gravity and surface tension.
This common physical phenomenon is classically modeled through the Darcy's law \cite{Darcy1856} which states that forces acting on the fluid equal the flow velocity:
\begin{equation}\label{Darcy}
	\frac{\nu}{k}u=-\nabla p -g \rho\mathbf(0,1).
\end{equation}
Here $u$ is the fluid velocity,  $p$ is the fluid pressure, $\nu>0$ is the dynamic viscosity and $\rho>0$ is the density. The constants $k$ and $g$ are the permeability of medium and gravity, respectively. This law was first used by Muskat in \cite{Muskat} two deal with the dynamics of immiscible fluids in porous media. It arises in hydrogeology, in particular in the study of groundwater flows through aquifers.
This scenario has been widely studied, in particular for the mathematical analogies with the free boundary system given by a fluid in a vertical Hele-Shaw cell \cite{SaffmanTaylor1958}. 
It governs the motion of a viscous fluid confined between two vertical parallel flat plates in proximity to each other. The equivalence between the two problems holds by identifying the fixed distance between the plates in a Hele-Shaw cell to the the constant permeability of the homogeneous medium in the one-phase Muskat problem.\\
In this paper, we consider the unstable configuration in which an incompressible, homogeneous and viscous fluid lies over the dry region.
We denote by $u(t,x,z)$ the fluid velocity vector field, by $\zeta(t,x)$  the $2\pi$-periodic free surface elevation and by $p(t,x,z)$ the fluid pressure. Here $x$ is the horizontal variable and $z$ is the vertical (high or depth). We consider the two-dimensional case, suppressing the dependence in the other horizontal variable. The fluid domain is defined by $$\widetilde{\Omega}(t)=\{(x,z)\in \mathbb{T}\times \mathbb{R} \ |   -\zeta(t,x)<  z < H  \}$$ 
with upper and lower boundary given respectively by
$$\widetilde{\Gamma}_{\rm top}=\{(x,H) |  \ x\in\mathbb{T} \}, \qquad \widetilde{\Gamma}(t)=\{(x,-\zeta(t,x)) |  \ x\in\mathbb{T} \}.$$
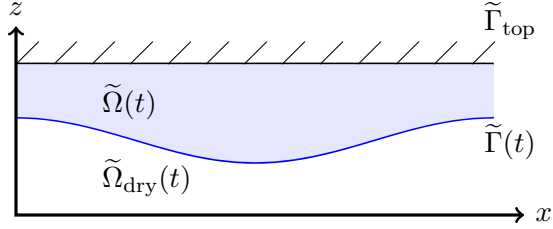
\begin{figure}[h]
	\centering
	\begin{tikzpicture}[domain=0:2*pi, scale=1]
		\draw[color=black] plot (\x,{0.3*cos(\x r)+1}); 
		\draw[very thick, smooth, variable=\x, blue] plot (\x,{0.3*cos(\x r)+1}); 
		\draw[very thick, smooth, variable=\x, black] plot (\x,2); 
		\fill[blue!10] plot[domain=0:2*pi] (\x,2) -- plot[domain=2*pi:0] (\x,{0.3*cos(\x r)+1});
		\draw[very thick,<->] (2*pi+0.4,0) node[right] {$x$} -- (0,0) -- (0,2.5) node[above] {$z$};
		\coordinate[label=above:{$\widetilde{\Gamma}(t)$}] (A) at (6.5,0.60);
		\coordinate[label=above:{$\widetilde{\Gamma}_{\text{\rm top}}$}] (A) at (6.5,2.2);
		\node[right] at (1,1.5) {$\widetilde{\Omega}(t)$};
		\node[right] at (1,0.5) {$\widetilde{\Omega}_{\text{dry}}(t)$};
		\draw[-] (0,2) -- (0.3, 2.3);
		\draw[-] (0.5,2) -- +(0.3, 0.3);
		\draw[-] (1,2) -- +(0.3, 0.3);
		\draw[-] (1.5,2) -- +(0.3, 0.3);
		\draw[-] (2,2) -- +(0.3, 0.3);
		\draw[-] (2.5,2) -- +(0.3, 0.3);
		\draw[-] (3,2) -- +(0.3, 0.3);
		\draw[-] (3.5,2) -- +(0.3, 0.3);
		\draw[-] (4,2) -- +(0.3, 0.3);
		\draw[-] (4.5,2) -- +(0.3, 0.3);
		\draw[-] (5,2) -- +(0.3, 0.3);
		\draw[-] (5.5,2) -- +(0.3, 0.3);
		\draw[-] (6,2) -- +(0.3, 0.3);
	\end{tikzpicture} 
	\caption{Physical setting}
\end{figure}

The one-phase unstable Muskat problem is described by the following system:
\begin{equation}\label{HSunstable0}
	\begin{cases}\begin{aligned}
			&\frac{\nu}{k}u=-\nabla p -g \rho\mathbf{e}_z \qquad  &\mbox{in}\quad \widetilde{\Omega}(t)\\
			&\nabla \cdot u=0\qquad &\mbox{in}\quad \widetilde{\Omega}(t)\\
			&\partial_t (-\zeta) = u\cdot N_{-\zeta} \qquad &\mbox{on}\quad \widetilde{\Gamma}(t)\\
			&p=\gamma \kappa_{-\zeta} \qquad &\mbox{on}\quad \widetilde{\Gamma}(t)\\
			&u\cdot \mathbf{e}_z=0  \qquad &\mbox{on} \quad \widetilde{\Gamma}_{\rm top} 
		\end{aligned}
	\end{cases}
\end{equation} where $\mathbf{e}_z=(0,1)$ is the unit vector parallel to the vertical axis, $N_{-\zeta}=(\partial_x(-\zeta),-1)^T$ is the outgoing normal vector to $\widetilde{\Gamma}(t)$, $\gamma$ is the surface tension coefficient and  $\kappa_{-\zeta}$ is the mean surface curvature defined by
$$\kappa_{-\zeta}= \frac{\partial_{xx}(-\zeta)}{(1+ (\partial_x(-\zeta))^2)^{3/2}}.$$ 
The boundary condition for the pressure at the surface in \eqref{HSunstable0} is given by the Young-Laplace law for the pressure drop at the interface between two immiscible fluids considering the  exterior atmospheric pressure  equals to zero for simplicity.

The configuration of the one-phase unstable Muskat problem naturally appears in hydrology, in particular when dealing with groundwater. Indeed, below the soil there are additional layers that contain water plus air in the pores. At sufficient depth, one can find a capillarity zone, where water is drawn upward by surface forces and this is in the form of a film around the soil grains. Hence, capillarity effects play a key role in the stabilization of the Rayleigh-Taylor instability (see below) due to the unstable configuration of a thin film fluid layer in the capillarity zone of the soil. This motivates our goals in this paper: derivation of lubrication approximations in the unstable case and the study of the influence of the surface tension on the thin film asymptotic models.

\subsection{Previous results} In the mathematical literature, this scenario has attracted a lot of attention, in particular because it is a classical free boundary problem with wide applicability \cite{Bear1988}. In the case of surface tension and without gravity interaction ($g=0$), local-in-time existence for large data was first proved \cite{DuchonRobert1984,Chen1993,EscherSimonett1997}. See \cite{HQNguyen2019} where the regularity of the initial data is at any subcritical level in terms of Sobolev spaces. Capillarity force moving the fluid provides surface tension as a high-order parabolic character to the system, giving local-in-time existence. For small initial data, the parabolicity is stronger than the nonlinearity, giving global existence \cite{Chen1993,ConstantinPugh1993,YeTanveer2011} for solutions close to the circle or near planar. Considering gravity, when the fluid is on top, large initial data produce local in-time instabilities \cite{GHS2007}. This fact is also given when a less viscosity fluid is pushing a more viscous one, producing fingering for small surface tension \cite{Otto1997,EscherMatioc2011}. These are Rayleigh-Taylor instabilities, also known as Saffman-Taylor instabilities in the Muskat framework. Recently, these gravity unstable scenarios have been proved to have global existence for near planar solutions, giving instant time smoothing \cite{GG-BS2020}. Bubble shape solutions are particular unstable due to gravity movement and therefore due to viscosity. However, they have special structure giving globally in-time regularity for small initial slopes compared to the surface tension coefficient \cite{GG-JPS2019Bubble}.

Without surface tension, there are recent results showing very interesting behavior of the solutions from the fluid dynamics point of view. Non-graph stable regimes provide blow-up scenarios where two particles on the free boundary collide in finite time \cite{CCFG2016,CordobaPernas-Castano2017}. On the other hand, for the graph stable scenario, there are maximum principles for the high and the slope \cite{Kim2003,AlazardOneFluid2019}. These facts have been used to give well-posedness for an arbitrary slope \cite{DGN2021}. The solutions have been shown to do not smooth instantly \cite{APW2023}. For small slope, there is instant smoothing in the stable regime and ill-posedness in the unstable regime \cite{GG-JPS2019}.
In the stable regime, as the surface tension coefficient goes to zero, capillarity solutions approach to solutions without surface tension \cite{Ambrose2014} with low regularity at the optimal decay rate \cite{FlynnNguyec2021}.

Some of these features only hold in the one-phase case. In the two-phase scenario (two-immiscid fluid interaction), there is no particle collision as in the the one-phase case \cite{GancedoStrain2014}. Initial graphs can turn to nongraph in finite time \cite{CCFGL-F2012}. See \cite{Sema2017} for a review and \cite{GancedoLazar2022,AlazardHung2023,G-JG-SNP2022} for recent developments of the two-phase case.      

Viscous thin fluid films and lubrication approximations have been widely studied in fluid dynamics in the last three decades. The most studied asymptotic model is the classical thin film equation, a fourth-order degenerate parabolic equation. Both the existence of global weak solutions \cite{BF1990, BBD-P1995, BP1996} and singularity formation \cite{Con1993, Con2018} of the thin film equation were investigated three decades ago. Since these pioneering works, many authors focused their attention on different configurations of the thin film equation in order to get a complete and deeper understanding of the dynamics described by this approximated equation.
The equations for the motion of two thin fluid films in a porous medium were derived from the three-phase Muskat problem \cite{EMM2012}. The stable case and the influence of both capillary and gravitational effects were considered.
 In the case of purely gravity-driven motion, the local existence of classical solutions in the Sobolev space $H^2$ and exponential stability of steady state solutions were determined in \cite{EMM2012}, while the existence of nonnegative global weak solutions and exponential convergence towards equilibria in $L^2$ were obtained in \cite{ELM2011}. In the case of purely capillarity driven motion,  existence of nonnegative global weak solutions in $H^1$ is given using a priori estimates \cite{Mat2012}.
 In the case of both gravity and capillarity driven motion, local existence and asymptotic stability of steady states were proved in $H^4$ in the stable case \cite{EM2013}. Existence of global $L^2$ weak solutions was obtained on the full line using a gradient flow theory \cite{LM2013, LM2014} and it was recently proved on the torus in low regularity Wiener spaces together with propagation of Sobolev regularity \cite{BG-B2019}.
While the literature of the existence theory is wide and well-known, the rigorous justification of the lubrication approximations has not been deeply investigated. Indeed, to the best of our knowledge, the unique study dealing with this aspect is \cite{MP2012}, where the authors derived a cascade of thin film equations of different orders and rigorously justified them in highly regular Sobolev spaces. They considered a purely capillarity driven motion, the approximated equations derived are not explicit and moreover are obtained using two ansatzs, on the velocity potential and on the free surface.

\subsection{Main results}
The aim of this work is twofold. First, we derive lubrication approximations for the one-phase Muskat problem in a thin film regime in the unstable case, where the fluid lies over a dry region, taking into account the gravitational effects. To do that, exploiting the incompressibility condition, we employ a depth-averaged formulation used in the derivation of asymptotic models of water waves \cite{Lan17, Boc20} to write the evolution equation for the free surface. This formulation permits to derive different explicit thin film asymptotic models with respect to the shallowness parameter $\mu$ in a direct manner after having introduced only one ansatz for the velocity potential. In the regime $\tfrac{1}{\rm Bo}=O(1)$ and at approximation $O(\mu)$, denoting by $\zeta$ the free surface, we recover the gravity-driven thin film equation 
\begin{equation}\label{tf}
\partial_t \zeta + \partial_x\left((1+\eps \zeta)(\partial_x \zeta + \tfrac{1}{\mathrm{Bo}}\partial_{xxx} \zeta)\right)=0\\[5pt]
\end{equation} 
where $\eps$ is the nonlinearity parameter and $\rm Bo$ is the Bond number describing the ratio between gravitational and surface forces (see Section \ref{dimless_muskat} for the detailed definitions). Furthermore, in the different regime $\tfrac{1}{\rm Bo}=\tfrac{\sqrt{\mu}}{\rm bo}=O(\sqrt{\mu})$ and at approximation $O(\mu^{3/2})$ we obtain a refined gravity-driven thin film equation 
\begin{equation}\label{tf_ref}
\begin{aligned}
\partial_t  \zeta +\partial_x\left((1+\eps \zeta)(\partial_x \zeta + \tfrac{\sqrt{\mu}}{\mathrm{bo}}\partial_{xxx} \zeta)\right)+ \tfrac{\mu}{3}\partial_{xx}\left((1+\eps\zeta)^3 \partial_{xx}\zeta\right) =0,\\[10pt]
\end{aligned}
\end{equation} which, to the best of our knowledge, appears for the first time in the thin film literature.
Secondly, our goal is to rigorously justify the derived thin film asymptotic models, showing that their solutions are good approximations of the solution to the full one-phase unstable Muskat problem, whose existence has been recently obtained \cite{GG-BS2020}. To do that, we derive elliptic estimates for the velocity potential adapting the ones used in \cite{GG-BS2020}. We use them to get, respectively, in Theorem \ref{conv_theo} and Theorem \ref{conv_ref_theo}, error estimates for both the gravity-driven thin film and the refined gravity-driven thin film equations. Namely, we obtain for $\mathrm{Bo}<1$ and $\mu\in(0,1)$
\begin{equation*}
\|\zeta^{\rm MU}_\mu -\zeta^{\rm app}\|_{L^\infty([0,T]; \ \mathbb{A}^0_{\nu t})}+ 	 \tfrac{1}{64}\left(\tfrac{1}{\rm Bo}-1\right)\|\zeta^{\rm MU}_\mu -\zeta^{\rm app}\|_{L^1([0,T]; \ \mathbb{A}^4_{\nu t})}\leq  C\mu \\[10pt]
\end{equation*}
and for $\mathrm{bo}$ small enough and $\mu\in[\mu_0,1)$
\begin{equation*}
\|\zeta^{\rm MU}_\mu -\zeta^{\rm app}_{\rm ref}\|_{L^\infty([0,T]; \ \mathbb{A}^0_{\nu t})}+  \  \tfrac{1}{64}\left(\tfrac{\sqrt{\mu}}{\rm bo}+\tfrac{\mu}{3}-1\right)	\|\zeta^{\rm MU}_\mu -\zeta^{\rm app}_{\rm ref}\|_{L^1([0,T]; \ \mathbb{A}^4_{\nu t})}\leq  C\mu^{3/2}, \\[5pt]
\end{equation*}
where $\zeta^{\rm MU}_\mu$ is solution to the full one-phase unstable Muskat problem obtained in \cite{GG-BS2020} and $\zeta^{\rm app}$, $\zeta^{\rm app}_{\rm ref}$ are solutions to \eqref{tf} and \eqref{tf_ref} respectively. 
One of the novelties with respect to \cite{MP2012} is that we obtain asymptotic stability in low regularity Wiener space $\mathbb{A}^0_{\nu t}$ (see Section \ref{fun_sec} for the definition). Moreover, no assumptions are made on the approximated surface profiles $\zeta^{\rm app}$ and $\zeta^{\rm app}_{\rm ref}.$ The unstable configuration is responsible for the lower bound $\mu_0$ of the parameter $\mu$ in the convergence result. Contrarily, in the stable case the sign of the gravity term permits to remove that condition and to take arbitrarily small values of $\mu$. More precisely, we derive the refined thin film equation in the stable case
\begin{equation}\label{tf_stable_ref}
\begin{aligned}
\partial_t  \zeta +\partial_x\left((1+\eps \zeta)(-\partial_x \zeta + \tfrac{\sqrt{\mu}}{\mathrm{bo}}\partial_{xxx} \zeta)\right)- \tfrac{\mu}{3}\partial_{xx}\left((1+\eps\zeta)^3 \partial_{xx}\zeta\right) =0
\end{aligned}
\end{equation} and, denoting now by $\zeta^{\rm app}_{\rm ref}$ a solution to \eqref{tf_stable_ref}, we obtain for $\mathrm{bo}>0$ and $\mu\in(0,\mu_1)$ the error estimate
\begin{equation*}
\|\zeta^{\rm MU}_\mu -\zeta^{\rm app}_{\rm ref}\|_{L^\infty([0,T]; \ \mathbb{A}^0_{\nu t})}+  \  \tfrac{1}{64}\left(\tfrac{\sqrt{\mu}}{\rm bo}-\tfrac{\mu}{3}\right)	\|\zeta^{\rm MU}_\mu -\zeta^{\rm app}_{\rm ref}\|_{L^1([0,T]; \ \mathbb{A}^4_{\nu t})}\leq  C\mu^{3/2}. \\[5pt]
\end{equation*}
Finally, we point out that we investigate the most difficult unstable case and the most complete gravity-capillarity driven configuration. However, the analysis carried out in this paper, used for the derivation of asymptotic models and their rigorous justification, holds also in the purely capillarity-driven motion ($g=0$). 
Indeed, in the purely capillarity-driven scenario, one only needs to change the definition of the Bond number $\rm{Bo}$ and repeat the computations to obtain the same results (see Remark \ref{Rcapillary-drivenBond}). This configuration in the absence of gravity is the same of the well-known Hele-Shaw interface problem \cite{SaffmanTaylor1958}. Since the new term in the refined model is due to the presence of gravity, it turns out that under both orders of approximation $O(\mu)$ and $O(\mu^{3/2})$ we obtain the standard thin film equation
\begin{equation*}\label{tf_capillary}
\partial_t \zeta + \tfrac{1}{\mathrm{Bo}}\partial_x\left((1+\eps \zeta) \partial_{xxx} \zeta\right)=0
\end{equation*} whether $\tfrac{1}{\rm Bo}=O(1)$ or $O(\sqrt{\mu})$.

\subsection{Outline of the paper}
In Section \ref{muskat_sec} we introduce a depth-averaged formulation for the one-phase Muskat problem driven by gravity and surface tension in the unstable regime. In Section \ref{fun_sec} the functional framework in which the analysis is carried out is presented, introducing Wiener spaces, Wiener-Sobolev spaces, and structural propositions.
In Section \ref{lubri_sec} we deal with the lubrication approximations of the general problem and the thin film equations are derived after introducing different ansatzs on the fluid velocity potential. In Section \ref{ellipest_sec} we derive crucial elliptic estimates for the remainder velocity potential in both approximations. These estimates allow to obtain the optimal order of convergence for both asymptotics models in Section \ref{conv_sec}, detailed in Theorem \ref{conv_theo}, Theorem \ref{conv_ref_theo} and Theorem \ref{conv_ref_theo_stable}.

\section{One-phase unstable Muskat problem}\label{muskat_sec}
 After the change of variable $z'=-z$ and omitting the prime, the system \eqref{HSunstable0} can be recast as 
\begin{equation}\label{HSunstable}
\begin{cases}\begin{aligned}
&\frac{\nu}{k}u=-\nabla p + g \rho\mathbf{e}_z \qquad  &\mbox{in}\quad \Omega(t)\\
&\nabla \cdot u=0\qquad &\mbox{in}\quad \Omega(t)\\
&\partial_t \zeta = u\cdot N_\zeta \qquad &\mbox{on}\quad \Gamma(t)\\
&p=-\gamma \kappa_{\zeta} \qquad &\mbox{on}\quad \Gamma(t)\\
&u\cdot \mathbf{e}_z=0  \qquad &\mbox{on} \quad \Gamma_{\rm bot}
\end{aligned}
\end{cases}
\end{equation} 
where the fluid domain reads $$\Omega(t)=\{(x,z)\in \mathbb{T}\times \mathbb{R} \ |   -H<  z < \zeta(t,x)  \}$$ 
with upper and lower boundary given respectively by
$$\Gamma(t)=\{(x,\zeta(t,x)) |  \ x\in\mathbb{T} \}, \qquad \Gamma_{\rm bot}=\{(x,-H) |  \ x\in\mathbb{T} \}, $$
and $N_\zeta=(-\partial_x\zeta,1)^T$ is the outgoing normal vector to $\Gamma(t)$.

\begin{figure}[h]
	\centering
	\begin{tikzpicture}[domain=0:2*pi, scale=1] 
	\draw[color=black] plot (\x,{0.3*sin(\x r)+1}); 
	\draw[very thick, smooth, variable=\x, blue] plot (\x,{0.3*sin(\x r)+1}); 
	\fill[orange!10] plot[domain=0:2*pi] (\x,0) -- plot[domain=2*pi:0] (\x,{0.3*sin(\x r)+1});
	\draw[very thick,<->] (2*pi+0.4,0) node[right] {$x$} -- (0,0) -- (0,2) node[above] {$z$};
	\coordinate[label=above:{$\Gamma_{\text{b}}(t)$}] (A) at (6.8,0);
	\coordinate[label=above:{$\Gamma(t)$}] (A) at (6.8,0.72);
	\node[right] at (1,1.8) {$\Omega_{\text{dry}}(t)$};
	\node[right] at (1,0.5) {$\Omega(t)$};
	\node[right] at (2.5,0.3) {$u(x,z,t),p(x,z,t)$};
	\draw[-] (0,-0.3) -- (0.3, 0);
	\draw[-] (0.5,-0.3) -- +(0.3, 0.3);
	\draw[-] (1,-0.3) -- +(0.3, 0.3);
	\draw[-] (1.5,-0.3) -- +(0.3, 0.3);
	\draw[-] (2,-0.3) -- +(0.3, 0.3);
	\draw[-] (2.5,-0.3) -- +(0.3, 0.3);
	\draw[-] (3,-0.3) -- +(0.3, 0.3);
	\draw[-] (3.5,-0.3) -- +(0.3, 0.3);
	\draw[-] (4,-0.3) -- +(0.3, 0.3);
	\draw[-] (4.5,-0.3) -- +(0.3, 0.3);
	\draw[-] (5,-0.3) -- +(0.3, 0.3);
	\draw[-] (5.5,-0.3) -- +(0.3, 0.3);
	\draw[-] (6,-0.3) -- +(0.3, 0.3);
	\end{tikzpicture} 
	\caption{Physical setting after the change of coordinates}
\end{figure}
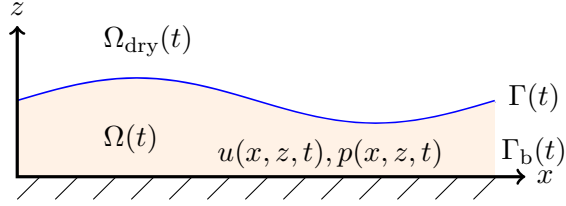

Introducing the potential $\Phi(x,z):=\tfrac{k}{\nu}(-p(x,z)+g\rho z)$, the system \eqref{HSunstable} can be written in terms of $\Phi$ as
\begin{equation}\label{HSunstablePhi}
\begin{cases}\begin{aligned}
&u=\nabla \Phi\qquad &\mbox{in}\quad \Omega(t)\\
&\Delta \Phi=0\qquad &\mbox{in}\quad \Omega(t)\\
&\partial_t \zeta = \nabla \Phi \cdot N\qquad &\mbox{on}\quad \Gamma(t)\\
&\Phi=\tfrac{k}{\nu}\left(g \rho \zeta +\gamma \kappa_{\zeta}\right)\qquad &\mbox{on}\quad \Gamma(t)\\
&\partial_z \Phi=0\qquad &\mbox{on} \quad \Gamma_{\rm bot}  
\end{aligned}
\end{cases}
\end{equation}

Differently from the approach used in \cite{GG-BS2020}, we exploit the divergence-free condition on the fluid velocity to write the kinematic equation in a divergence form. This kind of approach is usually applied when dealing with asymptotic models for the water waves problem \cite{Lan13} and for the floating structures problem \cite{Lan17, Boc20}. Here we adopt it to derive in a more direct way the thin-film approximation of the full one-phase Muskat problem. More precisely, we will write the potential as a formal expansion in terms of some parameters and, after truncating at the leading order, the asymptotic model will be derived.\\
In fact, using the fact that $\Phi$ is a harmonic function and the non-permeability condition at $\Gamma_{\rm bot}$, we obtain that
\begin{align*}
\nabla \Phi \cdot N&= -\partial_x\zeta (\partial_x\Phi )_{|_{z=\zeta}} + (\partial_z \Phi)_{|_{z=\zeta}}  = -\partial_x\zeta (\partial_x\Phi )_{|_{z=\zeta}} + \int_{-H}^{\zeta}\partial_{zz} \Phi dz\\ &=  -\partial_x\zeta (\partial_x\Phi )_{|_{z=\zeta}} -  \int_{-H}^{\zeta}\partial_{xx} \Phi dz= -\partial_x\left(\int_{-H}^{\zeta}\partial_x \Phi dz\right).
\end{align*}
Therefore the one-dimensional one-phase unstable Muskat problem \eqref{HSunstablePhi} reads 
\begin{equation}\label{heleshaw}
\begin{cases}\begin{aligned}
&\partial_t \zeta + \partial_x\left(\int_{-H}^{\zeta}\partial_x \Phi dz\right)=0 \quad&\mbox{in}\quad \mathbb{T} \ \ \  \ \\
&\Delta \Phi=0 &\mbox{in}\quad \Omega(t)\\
&\Phi=\tfrac{k}{\nu}\left(g \rho \zeta +\gamma \kappa_{\zeta}\right)\qquad &\mbox{on}\quad \Gamma(t)\\
&\partial_z \Phi=0\qquad &\mbox{on} \quad \Gamma_{\rm bot} 
\end{aligned}
\end{cases}
\end{equation}

\subsection{Dimensionless one-phase unstable Muskat problem}\label{dimless_muskat}
Let us introduce now three physical parameters intrinsic to the problem:
\begin{itemize}
	\item[-] The characteristic horizontal scale $L$
	\item[-] The characteristic fluid height $H$
	\item[-] The characteristic free surface amplitude $a$
\end{itemize}
These three parameters permit to define other three quantities whose size describes the properties of the fluid motion. They are the \textit{shallowness parameter} $\mu$, the \textit{nonlinearity parameter}  $\eps$ and the \textit{Bond number} $\rm Bo$, defined respectively by 
\begin{equation}\label{parameters}
\mu=\frac{H^2}{L^2}, \qquad \eps=\frac{a}{H}, \qquad \mathrm{Bo}=\frac{ g\rho L^2}{\gamma}.
\end{equation} Notice that the Bond number describes the ratio of gravitational to capillarity forces. We have that $\mu> 0$, $0< \eps \leq 1 $ and $\rm  Bo > 0.$
\begin{remark}\label{remarOsqrtmu}
	As a first approach we consider the case when 
	the typical horizontal and vertical characteristic scales of the problem are such that
$$ \frac{1}{\rm Bo}=O(1).$$ This ceases to hold if the configuration studied is different and involves larger or smaller scales. For instance, the scenario changes in the study of water waves with surface tension, where it is reasonable to assume that (see \cite{Lan13}) $$\frac{1}{\rm Bo}=O(\mu).$$ 
	The different modeling with respect to water waves is relevant because it will allow us to keep the influence of the surface tension when we derive the asymptotic model at order $O(\mu)$. The presence of the surface tension will be crucial to the existence of the solution to the derived thin-film equation, which would be ill-posed in the presence of only gravity due to the unstable configuration we are dealing with.
	\end{remark}

	We then introduce new dimensionless variables 
	\begin{equation}\label{var_dimless}
	 x'=\frac{x}{L}, \quad z'=\frac{z}{H},\quad t'=\frac{t}{t_0}, \quad\mbox{with}\mbox\quad  t_0=\frac{\nu L^2}{k g\rho H},
	\end{equation} and dimensionless unknowns
		\begin{equation}\label{unk_dimless}
	\zeta'=\frac{\zeta}{a}, \qquad \Phi'=\frac{\Phi}{\Phi_0}, \quad\mbox{with}\mbox\quad  \Phi_0=\frac{k g\rho H }{\nu}. 
	\end{equation}

	After omitting the primes and defining the dimensionless domains 
	\begin{align*}
	\Omega^\eps(t)&=\{(x,z)\in \mathbb{T}\times \mathbb{R} \ | \ -1 < z < \eps \zeta(t,x) \},\\
	\Gamma^\eps(t)&=\{(x,\eps\zeta(t,x)) \ | \ x\in \mathbb{T}\},\\
	\Gamma_{\rm bot}&=\{(x,-1) \ | \ x\in \mathbb{T}\},
	\end{align*}we write the dimensionless one-dimensional one-phase unstable Muskat problem :
\begin{equation}\label{heleshaw_dimless}
\begin{cases}\begin{aligned} 
&\partial_t  \zeta + \frac{1}{\eps}\partial_x\left(\int_{-1}^{\eps \zeta}\partial_x \Phi\right)=0 \quad&\mbox{in}\quad \mathbb{T} \ \ \ \ \ \\
&\Delta^\mu \Phi =0&\mbox{in}\quad \Omega^\eps(t)\\
&\Phi=\eps \zeta + \tfrac{\eps}{\rm{Bo}}  \kappa^{\eps,\mu}_{\zeta}\qquad &\mbox{on}\quad\Gamma^\eps(t)\\
&\partial_z \Phi=0\qquad &\mbox{on} \ \quad \Gamma_{\rm bot}
\end{aligned}
\end{cases}
\end{equation}
with the dimensionless Laplace operator $\Delta^\mu= \mu\partial_{xx} + \partial_{zz}$ and the dimensionless surface curvature $$\kappa^{\eps,\mu}_{\zeta}= \frac{\partial_{xx}\zeta}{(1+ (\eps \sqrt{\mu}\partial_x \zeta)^2)^{3/2}}.$$

	\begin{remark}\label{Rcapillary-drivenBond}
	In the purely capillarity-driven scenario, one should replace in \eqref{var_dimless}-\eqref{unk_dimless}
	$$t_0=\frac{k \rho H}{\nu L}, \quad \Phi_0=\frac{\nu L^3}{k \rho H}$$ and 
	define $\rm{Bo}$ by
	$$
	\mathrm{Bo}=\frac{\nu^2 L^3}{k^2 \rho H^2 \gamma}.
	$$
\end{remark} 
Since we are dealing with a free boundary problem, first we transform  the constant coefficients elliptic problem  \eqref{heleshaw_dimless} for the potential $\Phi$ in the moving domain $\Omega^\eps(t)$ into a variable coefficients elliptic problem in a fixed domain $\mathcal{S}$. In particular, we choose the periodic flat strip $\mathcal{S}= \mathbb{T}\times (-1,0)$. To do that, we introduce the diffeomorphism $\Sigma:\mathcal{S}\rightarrow \Omega^\eps(t)$ of the form 
$$\Sigma(t,x,z)=(x, z+\sigma(t,x,z))$$ for some function $\sigma$. We will explain later the choice of this function that it is not unique. After introducing the new potential defined on the flat strip defined by $\phi=\Phi\circ \Sigma$, the evolution equation for $\zeta$ reads
\begin{equation}\label{evoeq_flat}
\partial_t  \zeta +\frac{1}{\eps}\partial_x\left(\int_{-1}^{0}\left((1+\partial_z\sigma)\partial_x \phi-\partial_z\phi \partial_x \sigma\right) dz\right)=0
\end{equation}with $\phi$ satisfying the elliptic problem 
\begin{equation}\label{ellipro_flat}
	\begin{cases}
	\nabla^\mu \cdot P(\Sigma)\nabla^\mu \phi=0 &\mbox{in}\quad \mathcal{S},\\
	\phi=\eps \zeta + \frac{\eps}{\rm{Bo}}\kappa^{\eps,\mu}_{\zeta} & \mbox{on}\quad \mathbb{T}\times\{0\},\\
	\partial_z \phi=0 &\mbox{on}\quad \mathbb{T}\times\{-1\}.
	\end{cases}
\end{equation}with the dimensionless gradient $\nabla^\mu=(\sqrt{\mu}\partial_x, \partial_z)^T$ and $P(\Sigma)=\mathrm{Id} + Q(\Sigma)$ with
\begin{equation}\label{defQ}
	Q(\Sigma)=\begin{pmatrix}
	\partial_z \sigma & -\sqrt{\mu}\partial_x \sigma\\[10pt]
	-\sqrt{\mu}\partial_x \sigma & \dfrac{-\partial_z \sigma +\mu |\partial_x\sigma|^2}{1+\partial_z\sigma}.
	\end{pmatrix}
\end{equation}

\section{Functional framework}\label{fun_sec}
In this section we present the functional framework in which we derive our results for both the thin film equations and the asymptotic stability. Moreover, we show basic properties that we will repeatedly use later on.

Before introducing the functional spaces we are working with, we list some notations that are used throughout the paper.
We consider the torus $\mathbb{T}=\mathbb{R}/2\pi \mathbb{Z}$ that can be thought as the interval $[-\pi,\pi]$ with periodic boundary conditions. Given a function $f\in L^1(\mathbb{T})$, the $n$-th Fourier coefficient of $f$ is given by
$$\widehat{f}(n)=\frac{1}{2\pi}\int_{\mathbb{T}} f(x)e^{-inx}dx,$$ and if $f\in L^2(\mathbb{T})$ it can be written as a Fourier series, that it
$$f(x)=\sum\limits_{n\in\mathbb{Z}} \widehat{f}(n)e^{inx}.$$
Finally, we denote by $\mathcal{S}$ the horizontally periodic strip $\mathbb{T}\times (-1,0).$
\subsection{Wiener spaces on the torus}Let us introduce the Wiener space  $\mathbb{A}^s_\lambda(\mathbb{T})$ with $s,\lambda \geq 0$. It is defined as the space of functions $f\in L^1(\mathbb{T})$ such that the following norm is finite:
$$|f|_{s,\lambda} := \sum\limits_{n\in \mathbb{Z}} (1+|n|)^s e^{\lambda |n|}|\widehat{f}(n)|.$$
Notice that the presence of the exponential weight in the norm $|f|_{s,\lambda}$ implies that, for $\lambda>0$, the Wiener space $\mathbb{A}^s_\lambda(\mathbb{T})$ contains real analytic functions. It is immediate that 
$$\mathbb{A}^{s_1}_\lambda (\mathbb{T})\subset \mathbb{A}^{s_2}_\lambda(\mathbb{T}) \quad \mbox{for}\quad s_1 \geq s_2\geq 0.$$ Furthermore, for zero-mean functions the inhomogeneous norm $|f|_{s,\lambda}$ is equivalent to the homogeneous norm. Indeed, since $\widehat{f}(0)=0$, one has
\begin{equation}\label{equiv-norms}
\tfrac{1}{2^s}|f|_{s,\lambda}\leq \sum\limits_{n\in\mathbb{Z}}|n|^se^{\lambda|n|}|\widehat{f}(n)|\leq  |f|_{s,\lambda}.
\end{equation}
For the sake of simplicity, we write $\mathbb{A}^s_\lambda=\mathbb{A}^s_\lambda(\mathbb{T}).$
We show in the next proposition that Wiener spaces are algebras for $s\geq 0 $. Although it has been already presented in \cite{BG-B2019,GG-BS2020}, we give the detailed proof since we obtain a sharper constant in the product estimate.
\begin{proposition}\label{productWie}
	Let $f,g\in \wiener[s]_\lambda$ with $s,\lambda\geq 0.$ Then,
	\begin{equation}\label{est_product_wie}
	|fg|_{s,\lambda}\leq K_s \left(|f|_{0,\lambda}|g|_{s,\lambda}+ |f|_{s,\lambda}|g|_{0,\lambda}\right)
	\end{equation}
	with 
	$$K_s=\begin{cases}
	1 \quad \mbox{ for} \quad 0<s\leq 1,\\
	2^{s-1} \quad \mbox{for}\quad  s=0, s>1.
	\end{cases}$$
\end{proposition}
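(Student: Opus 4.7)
The starting point is the Fourier-side identification $\widehat{fg}(n)=\sum_{k\in\mathbb{Z}}\widehat{f}(n-k)\widehat{g}(k)$. Plugging this into the definition of $|fg|_{s,\lambda}$ and using the triangle inequality on absolute values reduces the proof to bounding the weight $(1+|n|)^s e^{\lambda|n|}$ by a sum of products of weights depending separately on $n-k$ and on $k$; Fubini and a translation of the summation index then recover product-of-norms factors $|f|_{\cdot,\lambda}|g|_{\cdot,\lambda}$.

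The exponential factor splits multiplicatively by the triangle inequality $|n|\le|n-k|+|k|$, yielding $e^{\lambda|n|}\le e^{\lambda|n-k|}e^{\lambda|k|}$. For the polynomial factor I would instead use the \emph{additive} bound $1+|n|\le(1+|n-k|)+(1+|k|)$ together with the elementary inequality
\begin{equation*}
(a+b)^s\le C_s\bigl(a^s+b^s\bigr),\qquad a,b\ge 0,
\end{equation*}
with $C_s=1$ when $0\le s\le 1$ (by subadditivity of the concave map $t\mapsto t^s$ vanishing at the origin) and $C_s=2^{s-1}$ when $s\ge 1$ (Jensen's inequality for the convex map $t\mapsto t^s$ applied to the average of $a$ and $b$). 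This additive splitting is the source of the sharper constant: the more naive multiplicative bound $(1+|n|)^s\le(1+|n-k|)^s(1+|k|)^s$ used in earlier references never produces a low-norm factor $|f|_{0,\lambda}$ or $|g|_{0,\lambda}$ on the right-hand side.

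Combining the two splittings, applying Fubini, and relabeling $m=n-k$ in the inner sum yields
\begin{equation*}
|fg|_{s,\lambda}\le C_s\bigl(|f|_{s,\lambda}|g|_{0,\lambda}+|f|_{0,\lambda}|g|_{s,\lambda}\bigr),
\end{equation*}
which gives $K_s=1$ for $0<s\le 1$ and $K_s=2^{s-1}$ for $s>1$. The edge case $s=0$ is treated directly: the polynomial weight is absent, the convolution bound yields $|fg|_{0,\lambda}\le|f|_{0,\lambda}|g|_{0,\lambda}$, and one simply rewrites this as $\tfrac{1}{2}\bigl(|f|_{0,\lambda}|g|_{0,\lambda}+|f|_{0,\lambda}|g|_{0,\lambda}\bigr)$, matching $K_0=2^{-1}$. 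There is no substantial obstacle; the only care required is the dichotomy at $s=1$ that distinguishes the subadditive from the convex regime and produces the factor $2^{s-1}$ exactly in the super-Lipschitz range.
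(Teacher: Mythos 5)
Your proposal is correct and follows essentially the same route as the paper's proof: the Fourier convolution identity, multiplicative splitting of the exponential weight, additive splitting of $(1+|n|)^s$ via subadditivity for $0<s\leq 1$ and convexity (giving $2^{s-1}$) for $s>1$, and Young's convolution inequality, with the $s=0$ case handled directly. The paper reaches the constant $2^{s-1}$ by inserting factors of $2$ and using $1+2|m|\leq 2(1+|m|)$ rather than by quoting $(a+b)^s\leq 2^{s-1}(a^s+b^s)$, but this is the same convexity argument.
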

\begin{proof}
	From the discrete version of the convolution theorem for the Fourier transform we have that
	\begin{equation*}
	\begin{aligned}
	|fg|_{s,\lambda}=\sum\limits_n(1+|n|)^se^{\lambda |n|} |\widehat{fg}(n)|= \sum\limits_n(1+|n|)^se^{\lambda |n|} | \widehat{f}\ast\widehat{g}(n)|
	\end{aligned}
	\end{equation*}
	and \eqref{est_product_wie} directly follows for $s=0$ using Young's convolution inequality.\\ Let us now consider the case $s\neq 0$.
	On the one hand, taking advantage of the convexity of the function $(1+ x)^s$ for $x\geq0$ and $s>1$, it yields 
	\begin{align*}(1+|n|)^s\leq (1+ \frac{2|n-m|}{2} + \frac{2|m|}{2})^s&\leq\frac{1}{2}\left[(1+ 2|n-m|)^s + (1+2|m|)^s\right]\\&\leq 2^{s-1}\left[(1+ |n-m|)^s + (1+|m|)^s\right].\end{align*}
On the other hand, for $s\in (0,1]$ we use the following inequality 
		\begin{equation*}
			(1+ \alpha)^s \leq 1 + \alpha^s \qquad \forall \alpha\geq 0
		\end{equation*}  to derive that
		\begin{equation}\label{est-s01}
			(x+ y)^s \leq x^s + y^s \qquad \forall x, y\geq 0.
		\end{equation}Then, from the monotonicity of the function $(1 + x)^s$ for positive $s$ and applying \eqref{est-s01} with $x=1+|n-m|$ with $y=1+|m|$, we obtain for $s\in(0,1]$
		\begin{equation*}
				(1+ |n|)^s\leq (1+|n-m|+ 1+ |m|)^s \leq (1+|n-m|)^s + (1+|m|)^s.
		\end{equation*}
	Therefore
	\begin{equation*}
	\begin{aligned}
	&\sum\limits_n(1+|n|)^se^{\lambda |n|} | \widehat{f}\ast\widehat{g}(n)|\leq \sum\limits_{n,m}(1+|n|)^se^{\lambda |n|}  | \widehat{f}(n-m)||\widehat{g}(m)|\\&\leq\sum\limits_{n,m} K_s\left[(1+ |n-m|)^s + (1+|m|)^s\right]e^{\lambda |n-m|} | \widehat{f}(n-m)|e^{\lambda |m|}|\widehat{g}(m)|
	\end{aligned}
	\end{equation*}
	and \eqref{est_product_wie} follows after using Young's convolution inequality.
\end{proof}

Iterating Proposition \ref{productWie}, one can obtain the following estimate for integer powers of a function in a Wiener space:
\begin{proposition}\label{powerWie}
	Let $f\in \mathbb{A}^s_\lambda$ with $s>0$ and $\lambda\geq 0$. Then for $n\in\mathbb{N}$ with $n\geq 2$
	\begin{equation}\label{est_wie_power}
	|f^n|_{s,\lambda} \leq  K_{s,n}|f|^{n-1}_{0,\lambda}|f|_{s,\lambda}
	\end{equation}with 
	$$K_{s,n}=\begin{cases}
	n \quad \mbox{ for}\quad 0< s\leq 1,\\[10pt]
	\sum\limits_{j=1}^{n-1}(2K_s)^j= \frac{2K_s((2K_s)^{n-1}-1)}{2K_s-1} \quad \mbox{ for}\quad  s>1.
	\end{cases}$$
	For $s=0$, 
	\begin{equation}\label{est_wie0_power}
	|f^n|_{0,\lambda} \leq |f|^{n}_{0,\lambda}.
	\end{equation}with 
\end{proposition}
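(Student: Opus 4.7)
The plan is to prove both estimates by induction on $n\geq 2$, with Proposition \ref{productWie} as the sole ingredient. The $s=0$ inequality \eqref{est_wie0_power} is immediate from iterating the bare Young convolution bound $|gh|_{0,\lambda}\leq |g|_{0,\lambda}|h|_{0,\lambda}$, which is the $s=0$ instance of \eqref{est_product_wie} (applied with $g=f$, $h=f^{n-1}$).

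For $s>0$, I treat the base case $n=2$ by applying \eqref{est_product_wie} with $g=f$, which gives $|f^2|_{s,\lambda}\leq 2K_s\,|f|_{0,\lambda}|f|_{s,\lambda}$. This matches the claim since $K_{s,2}=2=2K_s$ when $0<s\leq 1$ (where $K_s=1$), and $K_{s,2}=\sum_{j=1}^{1}(2K_s)^j=2K_s$ when $s>1$. For the inductive step I decompose $f^{n+1}=f\cdot f^n$, apply Proposition \ref{productWie}, invoke the inductive hypothesis on $|f^n|_{s,\lambda}$, and use \eqref{est_wie0_power} to bound $|f^n|_{0,\lambda}\leq |f|_{0,\lambda}^n$:
\begin{equation*}
|f^{n+1}|_{s,\lambda}\leq K_s\bigl(|f|_{0,\lambda}|f^n|_{s,\lambda}+|f|_{s,\lambda}|f^n|_{0,\lambda}\bigr)\leq K_s\bigl(K_{s,n}+1\bigr)|f|_{0,\lambda}^n|f|_{s,\lambda}.
\end{equation*}
Hence it only remains to verify that the explicit sequence in the statement satisfies $K_s(K_{s,n}+1)\leq K_{s,n+1}$.

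This final step is a routine algebraic check. For $0<s\leq 1$ one has $K_s=1$ and the condition reduces to $n+1\leq n+1$. For $s>1$, writing out the geometric sum and factoring one $K_s$ inside gives $K_s K_{s,n}=\tfrac{1}{2}\sum_{j=2}^{n}(2K_s)^j$, whence
\begin{equation*}
K_s(K_{s,n}+1)=\tfrac{1}{2}\sum_{j=2}^{n}(2K_s)^j+K_s=\tfrac{1}{2}\sum_{j=1}^{n}(2K_s)^j=\tfrac{1}{2}K_{s,n+1}\leq K_{s,n+1},
\end{equation*}
closing the induction. The only mild subtlety is precisely this verification: the induction actually produces a sharper recursion $K_{s,n+1}=K_s(K_{s,n}+1)$, while the closed form $\sum_{j=1}^{n-1}(2K_s)^j$ is a convenient geometric-sum upper bound losing at most a factor two — one just needs to confirm that it still dominates the natural recursive constant.
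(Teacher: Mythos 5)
Your proof is correct and follows exactly the route the paper intends: the paper gives no detailed argument, stating only that the estimate is obtained by "iterating Proposition \ref{productWie}", and your induction (base case $n=2$ from \eqref{est_product_wie}, inductive step via $f^{n+1}=f\cdot f^n$ together with \eqref{est_wie0_power}, plus the algebraic check $K_s(K_{s,n}+1)\leq K_{s,n+1}$) is precisely that iteration carried out in full. Nothing further is needed.
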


Moreover, the following interpolation inequality holds and we refer to \cite{BG-B2019} for the proof.
\begin{proposition}\label{interpolation}
	Let $0\leq s_1\leq s_2$ and $\lambda\geq 0$. If $f\in \wiener[s_1]_\lambda$ and $f\in\wiener[s_2]_\lambda$, then for any $\theta\in[0,1]$ and $s_\theta:=\theta s_1 + (1-\theta)s_2$ one has
	\begin{equation}
	|f|_{s_\theta,\lambda}\leq |f|_{s_1,\lambda}^\theta|f|_{s_2,\lambda}^{1-\theta}.
	\end{equation}\
\end{proposition}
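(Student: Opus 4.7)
The plan is to reduce the statement to a direct application of Hölder's inequality on counting measure, after decomposing the weight $(1+|n|)^{s_\theta} e^{\lambda|n|}$ multiplicatively. Concretely, I will write $s_\theta = \theta s_1 + (1-\theta) s_2$ inside the exponent and use
\[
(1+|n|)^{s_\theta} = (1+|n|)^{\theta s_1}\,(1+|n|)^{(1-\theta)s_2}, \qquad e^{\lambda|n|} = e^{\theta\lambda|n|}\,e^{(1-\theta)\lambda|n|},
\]
together with the trivial splitting $|\widehat{f}(n)| = |\widehat{f}(n)|^\theta\,|\widehat{f}(n)|^{1-\theta}$. Setting $a_n := (1+|n|)^{s_1} e^{\lambda|n|}|\widehat{f}(n)|$ and $b_n := (1+|n|)^{s_2} e^{\lambda|n|}|\widehat{f}(n)|$, the Wiener norm becomes
\[
|f|_{s_\theta,\lambda} = \sum_{n\in\mathbb{Z}} a_n^{\theta}\, b_n^{1-\theta}.
\]

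Second, I would handle the endpoints $\theta=0$ and $\theta=1$ separately, where the claimed bound is immediate from the definition of the Wiener norm. For the interior case $\theta\in(0,1)$, Hölder's inequality for $\ell^1(\mathbb{Z})$ with conjugate exponents $p=1/\theta$ and $q=1/(1-\theta)$ yields
\[
\sum_{n\in\mathbb{Z}} a_n^{\theta}\, b_n^{1-\theta} \leq \Bigl(\sum_{n\in\mathbb{Z}} a_n\Bigr)^{\theta}\Bigl(\sum_{n\in\mathbb{Z}} b_n\Bigr)^{1-\theta} = |f|_{s_1,\lambda}^{\theta}\,|f|_{s_2,\lambda}^{1-\theta},
\]
which is exactly the desired inequality. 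Note that the nonnegativity of $a_n$ and $b_n$ makes the application of Hölder's inequality legitimate, and the hypotheses $f\in\wiener[s_1]_\lambda\cap\wiener[s_2]_\lambda$ ensure that both right-hand side factors are finite, so there are no convergence issues.

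There is no serious obstacle here: the argument is essentially the standard log-convexity proof of interpolation for weighted $\ell^1$ norms transported to the Wiener setting. The only minor point to verify is the clean multiplicative factorization of the weight $(1+|n|)^{s_\theta} e^{\lambda|n|}$, which follows because both $s\mapsto(1+|n|)^s$ and $\lambda\mapsto e^{\lambda|n|}$ are exponential in their respective parameters. I do not foresee needing Propositions \ref{productWie} or \ref{powerWie}; the estimate is purely a Hölder statement on Fourier side and does not interact with the product structure on $\mathbb{T}$.
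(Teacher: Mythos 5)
Your argument is correct: the multiplicative splitting of the weight and of $|\widehat{f}(n)|$, followed by Hölder with exponents $1/\theta$ and $1/(1-\theta)$, is exactly the standard log-convexity proof of this interpolation inequality. The paper itself gives no proof and simply cites \cite{BG-B2019}, where the argument is this same Fourier-side Hölder computation, so your proposal matches the intended proof.
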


We will need in Section \ref{ellipest_sec} a control on the Wiener norm of a function that is a composition of a function in a Wiener space with a particular analytic function. We show here a regularity estimate that will be used later to obtain the desired control.

\begin{lemma}\label{estimate_G}
Let $G$ be defined by $G(x)=\frac{1}{(1+x)^{3/2}}-1$.
		 If $v\in \mathbb{A}^s_\lambda$ with $s,\lambda\geq 0$ and $4K_s|v|_{0,\lambda}<1$, then 
	\begin{equation}\label{est_G}|G(v)|_{s,\lambda}\leq 18K_s |v|_{s,\lambda}.
	\end{equation}
\end{lemma}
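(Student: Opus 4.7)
The plan is to exploit the analyticity of $G$ by expanding it as a binomial series $G(v)=\sum_{n\geq 1} c_n v^n$ with $c_n=\binom{-3/2}{n}$, and then to bound each $|v^n|_{s,\lambda}$ using the powers estimate Proposition \ref{powerWie} already proved. Concretely, I would first note the well-known identity
\begin{equation*}
\sum_{n\geq 0}|c_n|x^n=(1-x)^{-3/2},\qquad |x|<1,
\end{equation*}
since $c_n=(-1)^n|c_n|$, so that $\sum_{n\geq 1}|c_n|x^n=(1-x)^{-3/2}-1$ and (by differentiation) $\sum_{n\geq 1}n|c_n|x^{n-1}=\tfrac32(1-x)^{-5/2}$. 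Under the assumption $4K_s|v|_{0,\lambda}<1$, the formal identity $G(v)=\sum_{n\geq 1}c_n v^n$ can be given a rigorous meaning in $\mathbb{A}^s_\lambda$ because the partial sums are absolutely convergent in this norm, once the term-by-term estimates below are in hand.

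Next I would apply the triangle inequality and Proposition \ref{powerWie}:
\begin{equation*}
|G(v)|_{s,\lambda}\leq \sum_{n\geq 1}|c_n|\,|v^n|_{s,\lambda}\leq |v|_{s,\lambda}\sum_{n\geq 1}|c_n|\,\widetilde K_{s,n}\,|v|_{0,\lambda}^{n-1},
\end{equation*}
where $\widetilde K_{s,1}=1$ and $\widetilde K_{s,n}=K_{s,n}$ for $n\geq 2$. The proof then splits naturally according to the three cases covered by Proposition \ref{powerWie}. For $s=0$, one has $|v^n|_{0,\lambda}\leq |v|_{0,\lambda}^n$, so summation gives directly $(1-|v|_{0,\lambda})^{-3/2}-1$, which is of the form $|v|_{0,\lambda}\cdot h(|v|_{0,\lambda})$ with $h$ monotone; using $|v|_{0,\lambda}\leq 1/4$ one obtains $|G(v)|_{0,\lambda}\leq 4\bigl((4/3)^{3/2}-1\bigr)|v|_{0,\lambda}$. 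For $0<s\leq 1$, the Proposition gives $K_{s,n}=n$, so the derivative identity yields $\sum|c_n|n|v|_{0,\lambda}^{n-1}=\tfrac32(1-|v|_{0,\lambda})^{-5/2}\leq \tfrac32(4/3)^{5/2}$ under $|v|_{0,\lambda}\leq 1/(4K_s)=1/4$. For $s>1$, one uses the sharper bound $K_{s,n}\leq (2K_s)^n/(2K_s-1)$ to reduce to the generating function evaluated at $y=2K_s|v|_{0,\lambda}\leq 1/2$, producing a bound of the form $\tfrac{2K_s}{2K_s-1}\cdot \tfrac{(1-y)^{-3/2}-1}{y}\leq 2\cdot(4\sqrt2-2)$.

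In each of the three cases the resulting numerical factor is strictly smaller than $18K_s$, so combining them yields the uniform bound $|G(v)|_{s,\lambda}\leq 18K_s|v|_{s,\lambda}$ as claimed. The constant $18$ is not sharp but is a convenient integer large enough to absorb the worst case (the $s>1$ case after using the loose bounds $\tfrac{2K_s}{2K_s-1}\leq 2$ and $K_s\geq 1$).

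The only real subtlety I expect is the case-bookkeeping from Proposition \ref{powerWie}: the expression for $K_{s,n}$ differs in the three regimes $s=0$, $0<s\leq 1$, and $s>1$, and one must be careful to pair each regime with the correct evaluation of the binomial generating function so that the quantitative threshold $4K_s|v|_{0,\lambda}<1$ simultaneously ensures convergence of the series and yields the desired constant $18K_s$. Aside from this, the proof is essentially a direct Taylor expansion combined with the previously established algebra property and the explicit summation of a hypergeometric series.
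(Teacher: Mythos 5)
Your proposal is correct and follows essentially the same route as the paper: expand $G$ in its binomial series, apply Proposition \ref{powerWie} term by term, and sum the generating function $(1-x)^{-3/2}$ (or its derivative) under the smallness hypothesis, splitting according to the three regimes of $K_{s,n}$. One minor bookkeeping slip: for $s=0$ the paper's constant is $K_0=2^{-1}$, so the hypothesis $4K_s|v|_{0,\lambda}<1$ only gives $|v|_{0,\lambda}<\tfrac12$ rather than $\tfrac14$ (and $K_s\geq 1$ fails at $s=0$), but the resulting constant $2(2\sqrt{2}-1)$ still lies below $18K_0=9$, so the conclusion is unaffected.
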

\begin{proof}
Since $G$ is real analytic and $G(0)=0$, we know that for $|x|<1$
\begin{equation}\label{analyticx}
G(x)=\sum_{n\geq 1} \frac{G^{(n)}(0)}{n!} x^n\end{equation} and from the explicit expression of $G$ also that 
\begin{equation}\label{analytic-x}
G(-x)= \sum_{n\geq 1} \frac{|G^{(n)}(0)|}{n!} x^n.
\end{equation}
Therefore if $|v(x)|<1$ we have
$$|G(v(x))|\leq \sum_{n\geq 1} \frac{|G^{(n)}(0)|}{n!} |v^n(x)|.$$
Using that for $s\geq 0$
$$|v^n(x)|\leq |v^n|_{L^\infty}\leq |v^n|_{0,\lambda}\leq |v^n|_{s,\lambda}$$
and applying Proposition \ref{powerWie} it yields 
$$|G(v(x))|\leq |v|_{s,\lambda} \sum_{n\geq 1} \frac{|G^{(n)}(0)|}{n!} K_{s,n}|v|^{n-1}_{0,\lambda}.$$
We distinguish now when $0\leq s\leq 1$ and $s>1$. In the first case, $K_{s,n}\leq n$
and since \eqref{analytic-x} implies
$$\frac{d}{dx} (G(-x))= \sum_{n\geq 1} \frac{|G^{(n)}(0)|}{n!} n x^{n-1},$$
then we have
\begin{equation*}
\sum_{n\geq 1} \frac{|G^{(n)}(0)|}{n!} K_{s,n}|v|^{n-1}_{0,\lambda}\leq \sum_{n\geq 1} \frac{|G^{(n)}(0)|}{n!} n|v|^{n-1}_{0,\lambda}= \frac{3}{2(1-|v|_{0,\lambda})^{5/2}}
\end{equation*}if $|v|_{0,\lambda}<1$. In the second case, $K_{s,n}\leq (2K_s)^n=$
 and we get 
 \begin{equation*}\begin{aligned}
 	\sum_{n\geq 1} \frac{|G^{(n)}(0)|}{n!} K_{s,n}|v|^{n-1}_{0,\lambda}&\leq 2K_s\sum_{n\geq 1} \frac{|G^{(n)}(0)|}{n!} (2K_s|v|_{0,\lambda})^{n-1}\\[5pt]&
 	\leq 2K_s\sum_{n\geq 1} \frac{|G^{(n)}(0)|}{n!} n (2K_s|v|_{0,\lambda})^{n-1} \leq \frac{ 3K_s}{(1-2K_s|v|_{0,\lambda})^{5/2}}
 	\end{aligned}
 \end{equation*}if $2K_s|v|_{0,\lambda}<1$. Summing up, we have for $s\geq 0$
 \begin{equation*}
 	|G(v(x))|\leq \frac{ 3K_s}{(1-2K_s|v|_{0,\lambda})^{5/2}} |v|_{s,\lambda}
 \end{equation*} and starting from \eqref{analyticx} we can obtain in an analogous way
 \begin{equation*}
 |G(v)|_{s,\lambda}\leq \frac{ 3K_s}{(1-2K_s|v|_{0,\lambda})^{5/2}} |v|_{s,\lambda}, 
\end{equation*} which implies \eqref{est_G} if $4K_s|v|_{0,\lambda}<1.$
\end{proof}

\subsection{Wiener-Sobolev spaces in the horizontal strip}
 
	For $s,\lambda\geq 0$ and $k\in \mathbb{N}$ we consider the anisotropic Wiener-Sobolev spaces $\mathcal{A}^{s,k}_\lambda(\mathcal{S})$. It is defined as the space of functions $f\in L^1(\mathbb{T}\times (-1,0))$ such that $\partial_z^k \hat{f}(n,\cdot)\in L^1(-1,0) $ for any $n\in \mathbb{Z}$ and the following norm is finite:
$$\|f\|_{\mathcal{A}^{s,k}_\lambda}= \sum\limits_{n\in\mathbb{Z}}(1+|n|)^s e^{\lambda|n|}\int_{-1}^0 |\partial_z^k \widehat{f}(n,z)|dz.$$
For the sake of simplicity, we write $\mathcal{A}^{s,k}_\lambda=\mathcal{A}^{s,k}_\lambda(\mathcal{S}).$ 
Analogously to Wiener spaces, we can derive a product estimate for $\mathcal{A}^{s,0}_\lambda$.
The proof of the following proposition is based on Proposition \ref{productWie} and Proposition \ref{powerWie} and we refer to Lemma 1.6 in \cite{GG-BS2020} for more details. 

\begin{proposition}\label{productWieSob}
	Let $f \in \mathcal{A}^{s,0}_\lambda $ and $g\in \mathcal{A}^{s,1}_\lambda$  with $s,\lambda\geq 0.$ Then $fg \in \mathcal{A}^{s,0}_\lambda$ and
	\begin{equation*}\label{product_wiesob}
	\|fg\|_{\mathcal{A}^{s,0}_\lambda} \leq K_s\left(\|f\|_{\mathcal{A}^{s,0}_\lambda} \|g\|_{\mathcal{A}^{0,1}_\lambda} + \|f\|_{\mathcal{A}^{0,0}_\lambda} \|g\|_{\mathcal{A}^{s,1}_\lambda}\right)
	\end{equation*}
	with $K_s$ as in Proposition \ref{productWie}.
\end{proposition}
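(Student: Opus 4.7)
The plan is to reduce the bound on the strip $\mathcal{S}$ to the one-dimensional Wiener product estimate applied slicewise in $z$. Since Fourier multiplication in the $x$-variable does not see $z$, one has
\[
\widehat{fg}(n,z)=\bigl(\widehat{f}(\cdot,z)\ast\widehat{g}(\cdot,z)\bigr)(n),
\]
so that, unwinding the definition of the Wiener-Sobolev norm and using Fubini,
\[
\|fg\|_{\mathcal{A}^{s,0}_\lambda}=\int_{-1}^{0}|f(\cdot,z)g(\cdot,z)|_{s,\lambda}\,dz.
\]

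Applying Proposition \ref{productWie} for each fixed $z$, integrating in $z$, and pulling the $g$-factors out in $L^\infty_z$ while keeping the $f$-factors in $L^1_z$, I would arrive at
\[
\|fg\|_{\mathcal{A}^{s,0}_\lambda}\leq K_s\bigl(\|f\|_{\mathcal{A}^{0,0}_\lambda}\sup_{z}|g(\cdot,z)|_{s,\lambda}+\|f\|_{\mathcal{A}^{s,0}_\lambda}\sup_{z}|g(\cdot,z)|_{0,\lambda}\bigr).
\]
The remaining ingredient is a one-dimensional Sobolev-type embedding on the unit-length interval $(-1,0)$. For each Fourier mode $n$ the fundamental theorem of calculus combined with a mean-value argument (the interval has length one, so there exists $z^*$ satisfying $|\widehat g(n,z^*)|\leq\int_{-1}^{0}|\widehat g(n,z')|\,dz'$) gives, after absorbing the function-value term using the geometry of the interval,
\[
\sup_{z\in[-1,0]}|\widehat g(n,z)|\leq \int_{-1}^{0}|\partial_z\widehat g(n,z')|\,dz'.
\]
Multiplying by $(1+|n|)^s e^{\lambda|n|}$ and summing in $n$ turns $\sup_z|g(\cdot,z)|_{s,\lambda}$ into $\|g\|_{\mathcal{A}^{s,1}_\lambda}$ and analogously for the $s=0$ factor, producing the announced bound.

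The main delicate point I expect is precisely the Sobolev step: a naive application of the fundamental theorem of calculus would also bring in an extra $\|g\|_{\mathcal{A}^{s,0}_\lambda}$ contribution from the boundary, so one has to carefully track which factor carries the $z$-derivative and exploit the unit length of the vertical interval to absorb the function-value term. This is exactly what Lemma~1.6 of \cite{GG-BS2020} addresses, and once this boundary issue is resolved the constant $K_s$ transfers verbatim from Proposition \ref{productWie}, since all remaining operations (Fubini, Hölder in $z$, summation in $n$) are linear in the Wiener norms.
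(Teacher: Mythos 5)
Your reduction to a slicewise argument (convolution in $x$ at each fixed $z$, Tonelli to write $\|fg\|_{\mathcal{A}^{s,0}_\lambda}=\int_{-1}^0|f(\cdot,z)g(\cdot,z)|_{s,\lambda}\,dz$, Proposition \ref{productWie} slice by slice, then $L^1_z$--$L^\infty_z$ H\"older) is sound and is essentially the route the paper delegates to Lemma 1.6 of \cite{GG-BS2020}; up to that point the constant $K_s$ does transfer verbatim. The gap is the ``Sobolev step''. The inequality you assert,
\begin{equation*}
\sup_{z\in[-1,0]}|\widehat g(n,z)|\leq \int_{-1}^{0}|\partial_z\widehat g(n,z')|\,dz',
\end{equation*}
is false: for any mode on which $\widehat g(n,\cdot)$ is constant and nonzero the right-hand side vanishes while the left does not. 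The fundamental theorem of calculus together with your mean-value point $z^*$ only yields
\begin{equation*}
\sup_{z\in[-1,0]}|\widehat g(n,z)|\leq \int_{-1}^{0}|\widehat g(n,z')|\,dz'+\int_{-1}^{0}|\partial_z\widehat g(n,z')|\,dz',
\end{equation*}
and the function-value term cannot be ``absorbed by the geometry of the interval'': with the definition of $\|\cdot\|_{\mathcal{A}^{s,1}_\lambda}$ given in this paper (only $\partial_z\widehat g$ enters) that term is simply not dominated by the stated right-hand side. Deferring exactly this point to Lemma 1.6 of \cite{GG-BS2020} leaves unproved the one step a self-contained argument must supply.

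What your scheme honestly produces is
\begin{equation*}
\|fg\|_{\mathcal{A}^{s,0}_\lambda}\leq K_s\left(\|f\|_{\mathcal{A}^{s,0}_\lambda}\bigl(\|g\|_{\mathcal{A}^{0,0}_\lambda}+\|g\|_{\mathcal{A}^{0,1}_\lambda}\bigr)+\|f\|_{\mathcal{A}^{0,0}_\lambda}\bigl(\|g\|_{\mathcal{A}^{s,0}_\lambda}+\|g\|_{\mathcal{A}^{s,1}_\lambda}\bigr)\right),
\end{equation*}
i.e.\ the $W^{1,1}(-1,0)\hookrightarrow L^\infty(-1,0)$ embedding necessarily brings in the zeroth-order norms of $g$. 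Note that this is not a cosmetic loss: with the purely derivative-based $\mathcal{A}^{s,1}_\lambda$ quantity written in this paper, the inequality of the proposition as literally stated already fails for $g\equiv 1$ (left side $\|f\|_{\mathcal{A}^{s,0}_\lambda}$, right side $0$), so the lower-order terms are genuinely needed unless the norm $\|\cdot\|_{\mathcal{A}^{s,1}_\lambda}$ is understood, as in the cited reference, to contain the zeroth-order piece, or unless one knows that $\widehat g(n,\cdot)$ vanishes somewhere on $[-1,0]$ for every $n$ (not the case for $g=Q(\Sigma)$, whose entries include $z$-independent functions of $\zeta$). So keep the slicewise structure, but replace your one-term embedding by the two-term bound above and state the conclusion with the zeroth-order norms of $g$ included, or make explicit the convention on $\mathcal{A}^{s,1}_\lambda$ under which they are already accounted for.
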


\section{Lubrication  approximations}\label{lubri_sec}
In this section we consider lubrication approximations of the dimensionless one-phase Muskat problem, which are widely used in applications when a particular regime occurs. This is the so-called \textit{thin film} regime, where the fluid height is small compared to the horizontal length scale of the problem. With our notation, this means that $$\sqrt{\mu}=\frac{H}{L} \ll 1.$$We will consider different approximations of the problem in the thin-film regime, truncating the equations at different orders.\\\\
Let us write the flattened one-phase Muskat problem \eqref{evoeq_flat}-\eqref{ellipro_flat} in a different way. 
From \cite{Lan13}, we know that the elliptic operator $	\nabla^\mu \cdot P(\Sigma)\nabla^\mu$ can be written as the sum of two differential operators, the first one independent of $\mu$ and the second one dependent of $\mu$, namely
$$\nabla^\mu \cdot P(\Sigma)\nabla^\mu = \tfrac{1}{1+\eps\zeta} \partial_{zz} + \mu A_\sigma(\partial_x,\partial_z)$$ with 
$$A_\sigma(\partial_x, \partial_z) \cdot = \partial_x((1+\eps\zeta)\partial_x \cdot )  - \partial_z(\partial_x\sigma \partial_x \cdot)+ \partial_z\left(\frac{(\partial_x \sigma)^2}{1+\eps\zeta}\partial_z \cdot\right)  - \partial_x(\partial_x\sigma \partial_z \cdot).$$
Secondly, we write the dimensionless surface curvature as the sum of its linearized part and the remainder, that is
\begin{equation}\label{defcurv_dimless} \kappa^{\eps,\mu}_{\zeta}= \partial_{xx} \zeta+F_{\eps\sqrt{\mu}}(\partial_x \zeta)\partial_{xx} \zeta, \end{equation} where $F_{\eps\sqrt{\mu}}(\partial_x \zeta)=F(\eps\sqrt{\mu}\partial_x\zeta) $ with $$F(x)=\dfrac{1}{(1+x^2)^{3/2}}-1.$$ From the Taylor development of $F$ at $x=0$, we observe that $F_{\eps\sqrt{\mu}}(\partial_x \zeta)$ is $O(\mu)$.\\
We introduce now the following ansatz: we look for the potential $\phi$ as an asymptotic expansion in terms of $\sqrt{\mu}$, \textit{i.e.}  \begin{equation}\label{ansatz}\phi ^\mu=\phi^0 +\sqrt{\mu} \phi^{1/2}+ \widetilde{\phi}^\mu\end{equation} with $\phi^0$ satisfying
\begin{equation}\label{laplace_phi0}
\begin{cases}
\frac{1}{1+\eps\zeta}\partial_{zz} \phi^0=0&\mbox{in}\quad \mathcal{S},\\[5pt]
\phi^0=\eps (\zeta +  \tfrac{1}{\rm Bo}\partial_{xx}\zeta) & \mbox{on}\quad \mathbb{T}\times\{0\},\\[5pt]
\frac{1}{1+\eps\zeta}\partial_z \phi^0=0&\mbox{on}\quad \mathbb{T}\times\{-1\},
\end{cases}
\end{equation}$\phi^{1/2}$ satisfying 
\begin{equation}\label{laplace_phi1/2}
\begin{cases}
\frac{1}{1+\eps\zeta}\partial_{zz} \phi^{1/2}=0&\mbox{in}\quad \mathcal{S},\\[5pt]
\phi^{1/2}=0 & \mbox{on}\quad \mathbb{T}\times\{0\},\\[5pt]
\frac{1}{1+\eps\zeta}\partial_z \phi^{1/2}=0&\mbox{on}\quad \mathbb{T}\times\{-1\}.
\end{cases}
\end{equation} and the remainder potential $\widetilde{\phi}^\mu$ satisfying
\begin{equation}\label{laplace_phimu}
\begin{cases}
\nabla^\mu \cdot P(\Sigma)\nabla^\mu \widetilde{\phi}^\mu=-\mu A_\sigma(\partial_x,\partial_z)(\phi^0+\sqrt{\mu}\phi^{1/2})&\mbox{in}\quad \mathcal{S},\\[5pt]
\widetilde{\phi}^\mu= \frac{\eps}{\mathrm{Bo}} F_{\eps\sqrt{\mu}}(\partial_x \zeta)\partial_{xx} \zeta & \mbox{on}\quad \mathbb{T}\times\{0\},\\[5pt]
\frac{1}{1+\eps\zeta}\partial_z \widetilde{\phi}^\mu=0&\mbox{on}\quad \mathbb{T}\times\{-1\}.
\end{cases}
\end{equation}
It is straightforward that systems \eqref{laplace_phi0} and \eqref{laplace_phi1/2} admit respectively the unique explicit solutions 
\begin{equation}\label{phi0}
\begin{aligned}
\phi^0(x,z)= \phi^0(x)=\eps (\zeta+ \tfrac{1}{\rm{Bo}}  \partial_{xx} \zeta)
\end{aligned}
\end{equation}
and 
\begin{equation}\label{phi1/2}
\begin{aligned}
\phi^{1/2}(x,z)\equiv 0.
\end{aligned}
\end{equation}
Let us now choose the diffeomorphism $\Sigma$ such that $\Sigma^{-1}$ flattens the fluid domain $\Omega^\eps(t)$ into the horizontal strip $\mathcal{S}$. We consider $\Sigma(x,z)=(x, z+\sigma(x,z))$ with 
\begin{equation}\label{sigma}
\sigma(x,z)=\eps\zeta(x)(z+1).
\end{equation}Notice that \eqref{sigma} is not the unique possible choice for $\sigma$. For instance, one can define $\sigma$ as the harmonic extension of the free surface $\zeta$ (as done in \cite{GG-BS2020} for small perturbations of flat interfaces) or as a pseudo-differential vertical localization (as presented in \cite{Lan13}). We will see that, although with our simple choice of $\sigma$ the diffeomorphism $\Sigma$ is not \textit{regularizing} (in the sense of \cite{Lan13}), we get the same control for $\nabla \sigma$ in the Wiener-Sobolev space $\mathcal{A}^{s,0}_\lambda$ as in \cite{GG-BS2020} in terms of regularity of the free surface $\zeta.$ With this choice, $A_\sigma(\partial_x,\partial_z)$ reduces to
\begin{equation*}
\begin{aligned}
A_\zeta(\partial_x,\partial_z) \cdot = \ &\partial_x((1+\eps\zeta)\partial_x \cdot )  - \partial_z(\eps(z+1)\partial_x\zeta \partial_x \cdot)\\[5pt]&+ \partial_z\left(\frac{(\eps(z+1)\partial_x \zeta)^2}{1+\eps\zeta}\partial_z \cdot\right)  - \partial_x(\eps(z+1)\partial_x\zeta \partial_z \cdot).
\end{aligned}
\end{equation*}
 In particular, when $A_\zeta(\partial_x,\partial_z)$ is applied to a function independent of the vertical variable $z$ it reads 	$A_\zeta(\partial_x,\partial_z) =(1+\eps\zeta)\partial_{xx} $.\\
 Therefore, after injecting \eqref{sigma}, \eqref{ansatz}, \eqref{phi0} and \eqref{phi1/2} into \eqref{evoeq_flat}, the one-dimensional one-phase unstable Muskat problem reduces to the evolution equation
\begin{equation}\label{HSeq_dimless}
\begin{aligned}
\partial_t \zeta +& \partial_x\left((1+\eps \zeta)\left(\partial_x \zeta + \tfrac{1}{\rm{Bo}}\partial_{xxx} \zeta\right)\right)\\&= -\tfrac{1}{\eps}\partial_x\left(\int_{-1}^0\left((1+\eps\zeta)\partial_x \widetilde{\phi}^\mu - \eps(z+1)\partial_x\zeta \partial_z\widetilde{\phi}^\mu\right)dz\right)
\end{aligned}
\end{equation} with $\widetilde{\phi}^\mu$ satisfying the elliptic problem \eqref{laplace_phimu}.

\subsection{Approximation at order $O(\mu)$}Let us truncate \eqref{HSeq_dimless} by dropping the terms of order $O(\mu)$. Therefore, we obtain the following non-degenerate fourth-order parabolic equation 
\begin{equation}\label{thin-film}
\partial_t \zeta + \partial_x\left((1+\eps \zeta)(\partial_x \zeta + \tfrac{1}{\rm{Bo}}\partial_{xxx} \zeta)\right)=0.\vspace{1em}
\end{equation} 

This equation is a generalization of the well-known thin-film equation that takes into account the influence of both gravity and surface tension, the latter appearing in the Bond number $\textrm{Bo}$ (see definition in \eqref{parameters}). In some sense, the capillary effects balance the gravitational effects and allow to obtain existence of the solution to the approximated equation also in the unstable case.
Let us now introduce the definition of weak solution to \eqref{thin-film}.

\begin{definition}\label{weakdefTF}
We say that $\zeta\in L^\infty([0,T],L^\infty(\mathbb{T}))\cap L^1([0,T],W^{3,\infty}(\mathbb{T})) $ is a weak solution to \eqref{thin-film} with initial data $\zeta_0\in L^\infty(\mathbb{T})$ if for any $\varphi\in C^\infty([0,T]\times \mathbb{T})$ and $t\in[0,T]$ the following equality holds:
\begin{equation}
	\begin{aligned}
	&\int_\mathbb{T} \zeta(t,x)\varphi(t,x) dx-	\int_\mathbb{T} \zeta_0(x)\varphi(0,x)dx - \int_0^t\int_\mathbb{T} \zeta(t')\partial_t \varphi(t',x)dx dt'\\
	&\quad=\int_0^t\int_\mathbb{T}(1+\eps \zeta(t,x)) \partial_x  \zeta(t',x)\partial_x\varphi(t',x)dxdt'\\&\quad\quad +\frac{1}{\rm Bo}\int_0^t\int_\mathbb{T}(1+\eps \zeta(t,x)) \partial_{xxx} \zeta(t',x)\partial_x\varphi(t',x)dxdt'.
	\end{aligned}
	\label{weakTF}
\end{equation}
\end{definition}

We are now able to show the existence of the solution to \eqref{thin-film} in Wiener spaces.

\begin{theorem}\label{theoTF}
Let $0<\rm Bo < 1$ and $\zeta_0\in \wiener[0]_0(\mathbb{T})$ a zero-mean function such that 
\begin{equation}\label{smallnessTF}
|\zeta_0|_{0,0}< \tfrac{\tfrac{1}{\mathrm{Bo}}-1}{8\eps(\tfrac{1}{\mathrm{Bo}}+1)}.\end{equation}
Then there exists a global weak solution to \eqref{thin-film} in the sense of Definition \ref{weakdefTF} with initial data $\zeta_0$ which becomes instantaneously analytic in a growing strip in the complex plane. In particular, for $$\nu\in \left[0,\tfrac{1}{2}(\tfrac{1}{\mathrm{Bo}}-1)\right),$$ we have  $$\zeta\in L^\infty([0,T];\wiener[0]_{\nu t})\cap L^1([0,T];\wiener[4]_{\nu t})$$
for all $T>0$ and the solution $\zeta$ satisfies for any $t\in[0,T]$ the energy inequality
\begin{equation}\label{energy_estimate}
	|\zeta(t)|_{0,\nu t} + \tfrac{1}{64} \left(\tfrac{1}{\mathrm{Bo}}-1\right)\int_0^t|\zeta(t')|_{4,\nu t'}dt'\leq	|\zeta_0|_{0,0}
\end{equation} together with the decay
\begin{equation}\label{decay}
|\zeta(t)|_{0,\nu t} \leq 	|\zeta_0|_{0,0} \ e^{-\frac{t}{64}\left(\tfrac{1}{\mathrm{Bo}}-1\right)}.
\end{equation}
\end{theorem}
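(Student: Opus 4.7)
The plan is to build global solutions via a Fourier–Galerkin approximation coupled with uniform a priori estimates on the Fourier side, and then pass to the limit by standard compactness. I would begin by projecting \eqref{thin-film} onto Fourier modes $|n|\leq N$, obtaining smooth, mean-preserving approximate solutions $\zeta^N$ that exist globally in time as solutions of a finite-dimensional ODE system. The central task is then to establish a uniform-in-$N$ energy inequality for $|\zeta^N(t)|_{0,\nu t}$, after which the remaining steps are routine.

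Rewriting \eqref{thin-film} as
\begin{equation*}
\partial_t\zeta = -\partial_{xx}\zeta - \tfrac{1}{\mathrm{Bo}}\partial_{xxxx}\zeta - \eps\partial_x(\zeta\partial_x\zeta) - \tfrac{\eps}{\mathrm{Bo}}\partial_x(\zeta\partial_{xxx}\zeta),
\end{equation*}
the linear Fourier symbol $-n^2(\tfrac{n^2}{\mathrm{Bo}}-1)$ is strictly dissipative on every nonzero mode precisely when $\mathrm{Bo}<1$, and satisfies the elementary lower bound $n^2(\tfrac{n^2}{\mathrm{Bo}}-1)\geq (\tfrac{1}{\mathrm{Bo}}-1)n^4$ on $|n|\geq 1$. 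Using $\tfrac{d}{dt}|\hat\zeta(n,t)|\leq |\widehat{\partial_t\zeta}(n,t)|$ and summing against the growing weight $e^{\nu t|n|}$, the differentiation of the weight produces an extra term $\nu|n|\,e^{\nu t|n|}|\hat\zeta(n)|$ that must be absorbed by the dissipation; the assumption $\nu<\tfrac{1}{2}(\tfrac{1}{\mathrm{Bo}}-1)$ together with $\nu|n|\leq \nu n^4$ for $|n|\geq 1$ does exactly this, consuming half of the available dissipation and leaving, after the norm equivalence \eqref{equiv-norms}, an effective dissipation proportional to $(\tfrac{1}{\mathrm{Bo}}-1)|\zeta|_{4,\nu t}$.

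For the nonlinear terms, using $|\partial_x f|_{0,\nu t}\leq |f|_{1,\nu t}$ together with Proposition \ref{productWie} with $s=1$ (so $K_1=1$), I would bound $|\zeta\partial_x\zeta|_{1,\nu t}$ and $|\zeta\partial_{xxx}\zeta|_{1,\nu t}$ by mixed norms $|\zeta|_{s_1,\nu t}|\zeta|_{s_2,\nu t}$ with $s_1+s_2\leq 4$, and then reduce each of these to $|\zeta|_{0,\nu t}|\zeta|_{4,\nu t}$ via the interpolation of Proposition \ref{interpolation} (e.g.\ $|\zeta|_1\leq |\zeta|_0^{1/2}|\zeta|_4^{1/2}$, whence $|\zeta|_1|\zeta|_3\leq|\zeta|_0|\zeta|_4$). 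The outcome is a control of the form $C\eps(1+\tfrac{1}{\mathrm{Bo}})|\zeta|_{0,\nu t}|\zeta|_{4,\nu t}$, and the combined estimate takes the shape
\begin{equation*}
\frac{d}{dt}|\zeta^N|_{0,\nu t} + c_0(\tfrac{1}{\mathrm{Bo}}-1)|\zeta^N|_{4,\nu t} \leq C\eps(1+\tfrac{1}{\mathrm{Bo}})|\zeta^N|_{0,\nu t}|\zeta^N|_{4,\nu t}.
\end{equation*}
The smallness assumption \eqref{smallnessTF} is calibrated precisely so that a continuity/bootstrap argument forces $|\zeta^N(t)|_{0,\nu t}\leq |\zeta_0|_{0,0}$ for all $t\geq 0$; half of the dissipation then absorbs the nonlinearity, producing the energy inequality \eqref{energy_estimate} at the approximate level.

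To finish, the uniform bounds $\zeta^N\in L^\infty_t\mathbb{A}^0_{\nu t}\cap L^1_t\mathbb{A}^4_{\nu t}$, together with the control of $\partial_t\zeta^N$ in a suitable negative-regularity space read off directly from the equation, allow a standard Aubin–Lions argument to extract a subsequence converging to a limit $\zeta$ satisfying \eqref{weakTF}; the embedding $|f|_{L^\infty}\leq|f|_{0,\lambda}$ yields the regularity demanded by Definition \ref{weakdefTF}, and strong convergence in $L^2_t L^\infty_x$ suffices to pass to the limit in the nonlinear pieces of the weak formulation. The decay \eqref{decay} then follows from the trivial inequality $|\zeta|_{0,\nu t}\leq |\zeta|_{4,\nu t}$ inserted into \eqref{energy_estimate} and Grönwall. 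I expect the main obstacle to be the detailed bookkeeping of constants in the nonlinear estimate: it is exactly this balancing of the product and interpolation bounds against the available dissipation that has to produce the numerical thresholds in \eqref{smallnessTF} and the prefactor $\tfrac{1}{64}(\tfrac{1}{\mathrm{Bo}}-1)$ in \eqref{energy_estimate}, so that the continuity argument closes with the stated smallness constant and not merely with some larger unspecified one.
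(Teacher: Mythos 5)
Your proposal follows essentially the same route as the paper: a priori estimates on the Fourier side with the growing weight $e^{\nu t|n|}$, absorbing the weight term and the destabilizing $|n|^2$ into the fourth-order dissipation via $\nu<\tfrac12(\tfrac1{\mathrm{Bo}}-1)$ and \eqref{equiv-norms}, bounding the quadratic nonlinearity by $|\zeta|_{0,\nu t}|\zeta|_{4,\nu t}$ through Propositions \ref{productWie} and \ref{interpolation}, closing by smallness, and then invoking an approximation scheme (your Galerkin truncation is a concrete version of the ``standard approximation argument'' the paper also leaves to the reference) and Gr\"onwall for the decay. The only caveat is the constant bookkeeping you yourself flag: estimating $|\zeta\partial_x\zeta|_{1,\nu t}$ and $|\zeta\partial_{xxx}\zeta|_{1,\nu t}$ with $K_1=1$ rather than expanding the derivatives and working at $s=0$ as the paper does is a harmless variation, but it must be carried out explicitly to recover the stated thresholds and the prefactor $\tfrac{1}{64}(\tfrac1{\mathrm{Bo}}-1)$.
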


\begin{proof}
In order to get the existence of the solution to \eqref{thin-film}, first we need  to derive a priori parabolic estimates.  We rewrite \eqref{thin-film} as 
\begin{equation}\begin{aligned}
	\partial_t\zeta  +&\tfrac{1}{\rm Bo}\partial_{xxxx}\zeta +\partial_{xx}\zeta\\[5pt]& = -\tfrac{\eps}{\mathrm{Bo}}(\partial_x\zeta \partial_{xxx}\zeta + \zeta \partial_{xxxx}\zeta) - \eps(\partial_x\zeta)^2 - \eps\zeta\partial_{xx}\zeta = \rm NL_2(\zeta)
	\end{aligned}
\end{equation}where we have denoted by $\rm NL_2(\zeta)$ the quadratic nonlinear terms of the equation.
Considering the Fourier series of \eqref{thin-film}  we obtain 
\begin{equation*}
\partial_t \widehat{\zeta} + \tfrac{1}{\mathrm{Bo}}|n|^4 \widehat{\zeta} - |n|^2 \widehat{\zeta} = \widehat{\rm NL_2(\zeta)},
\end{equation*}where $\widehat{\zeta}$ denotes the Fourier coefficient of $\zeta$.
Multiplying by the complex conjugate $\overline{\widehat{\zeta}}$, dividing by $|\widehat{\zeta}|$ and taking the real part  it yields 
\begin{equation}\label{fourier-est}
	\partial_t |\widehat{\zeta}| + \tfrac{1}{\mathrm{Bo}}|n|^4 |\widehat{\zeta}| - |n|^2 |\widehat{\zeta}| = \tfrac{\rm Re(\widehat{NL_2(\zeta)}\ \overline{\widehat{\zeta}}) }{|\widehat{\zeta}|} \leq |\widehat{\rm NL_2(\zeta)}|,
\end{equation} Recalling that 
\begin{equation*}
		\frac{d}{dt} | \zeta|_{s,\nu t} = \sum_n(1+|n|)^s e^{\nu t|n|}\partial_t |\widehat{\zeta}| + \nu |\Lambda \zeta|_{s,\nu t} ,
\end{equation*}
we multiply \eqref{fourier-est} by $e^{\nu t |n|}$ and taking the sum for $n\in \mathbb{Z}$ we obtain for $s=0$
\begin{equation*}
	\frac{d}{dt} | \zeta|_{0,\nu t}  + (\tfrac{1}{\mathrm{Bo}} -1-\nu) \sum_n |n|^4 e^{\nu t|n|}   |\widehat{\zeta}| \leq |\rm NL_2(\zeta)|_{0,\nu t}.
\end{equation*}
Choosing $\nu\in \left[0,\tfrac{1}{2}(\tfrac{1}{\mathrm{Bo}}-1)\right)$ and due to \eqref{equiv-norms} it yields
\begin{equation}
\frac{d}{dt} | \zeta|_{0,\nu t} +\tfrac{1}{32}(\tfrac{1}{\mathrm{Bo}}-1)|\zeta|_{4,\nu t}\leq |\rm NL_2(\zeta)|_{0,\nu t}.
\label{apriori}
\end{equation}
Let us now estimate each term in the right-hand side of \eqref{apriori}. By repetitively using Proposition  \ref{productWie} and \ref{interpolation}, we have 
\begin{equation*}
	\begin{aligned}
	|\partial_x \zeta \partial_{xxx}\zeta|_{0,\nu t } &\leq |\partial_x\zeta|_{0,\nu t}|\partial_{xxx}\zeta|_{0,\nu t } \leq |\zeta|_{1,\nu t}|\zeta|_{3,\nu t}  \leq  |\zeta|_{0,\nu t} |\zeta|_{4,\nu t},
	\end{aligned}
\end{equation*}
\begin{equation*}
\begin{aligned}
|\zeta \partial_{xxxx}\zeta|_{0,\nu t} \leq|\zeta|_{0,\nu t}|\partial_{xxxx}\zeta|_{0,\nu t} \leq  |\zeta|_{0,\nu t} |\zeta|_{4,\nu t},
\end{aligned}
\end{equation*}
\begin{equation*}
\begin{aligned}
|(\partial_x\zeta)^2|_{0,\nu t} \leq  |\partial_x\zeta|^2_{0,\nu t}\leq |\zeta|^2_{1,\nu t}\leq|\zeta|_{0,\nu t} |\zeta|_{2,\nu t},
\end{aligned}
\end{equation*}
\begin{equation*}
\begin{aligned}
|\zeta \partial_{xx}\zeta|_{0,\nu t} &\leq  |\zeta|_{0,\nu t}|\partial_{xx}\zeta|_{0,\nu t}\leq |\zeta|_{0,\nu t} |\zeta|_{2,\nu t}.
\end{aligned}
\end{equation*}
Gathering all the previous estimates together, from \eqref{apriori} we get 
\begin{equation*}
	\frac{d}{dt} | \zeta|_{0,\nu t} +\tfrac{1}{32}(\tfrac{1}{\mathrm{Bo}}-1)|\zeta|_{4,\nu t}\leq  2\eps (\tfrac{1}{\mathrm{Bo}}+1)|\zeta|_{0,\nu t} |\zeta|_{4,\nu t}
\end{equation*}
and, assuming $|\zeta|_{0,\nu t} < \tfrac{\tfrac{1}{\mathrm{Bo}}-1}{128\eps \left(\tfrac{1}{\mathrm{Bo}}+1\right)},$ we derive the following inequality:
\begin{equation}
\frac{d}{dt} | \zeta|_{0,\nu t} +\tfrac{1}{64}(\tfrac{1}{\mathrm{Bo}}-1)|\zeta|_{4,\nu t}\leq  0. \label{aprioriest}
\end{equation}From the smallness assumption \eqref{smallnessTF}, one has
$$\|\zeta\|_{L^\infty(\mathbb{T})}\leq |\zeta|_{0,0}\leq |\zeta|_{0,\nu t}< \tfrac{\tfrac{1}{\mathrm{Bo}}-1}{128\eps (\tfrac{1}{\mathrm{Bo}}+1)}\leq \tfrac{1}{128 \eps},$$
which implies $1+\eps\zeta(t) >1-\tfrac{1}{128} >0$ for all $t$. This means that no pinch-off occurs and the estimate \eqref{aprioriest} is global in time.\\
Using a standard approximation argument together with the a priori estimate \eqref{aprioriest}, one can show the existence of the weak solution to \eqref{thin-film}, which satisfies \eqref{energy_estimate}. We omit the details of the approximation argument since it is not the aim of this paper, and we refer to \cite{GG-BS2020} the interested reader. Moreover, since $\mathbb{A}^4_{\nu t} \subset \mathbb{A}^{0}_{\nu t}$, from \eqref{energy_estimate} we obtain 
\begin{equation*}
|\zeta(t)|_{0,\nu t} \leq - \tfrac{1}{64} \left(\tfrac{1}{\mathrm{Bo}}-1\right)\int_0^t|\zeta(t')|_{0,\nu t}dt' +	|\zeta_0|_{0,0}
\end{equation*}
and using Gronwall's inequality the decay \eqref{decay} follows.
\end{proof}

\subsection{Approximation at order $O(\mu^{3/2})$}
Motivated by the aim to obtain new thin-film models that can handle the unstable configuration of the one-phase Muskat problem, we want to go further in the expansion of the potential $\phi^\mu$ with respect to  powers of the parameter $\sqrt{\mu}$ . More precisely, we refine \eqref{ansatz} by writing 
\begin{equation}\label{ansatz-refined}\phi^\mu=\phi^0 + \mu \phi^1+ \widetilde{\phi}^\mu_{\rm ref}\end{equation} with $\phi^0= \eps (\zeta+ \tfrac{1}{\rm{Bo}}  \partial_{xx} \zeta)$, $\phi^1$ satisfying 

\begin{equation}\label{laplace_phi1}
\begin{cases}
\frac{1}{1+\eps \zeta} \partial_{zz}\phi^1= -A_\zeta(\partial_x,\partial_z)\phi^0&\mbox{in}\quad \mathcal{S},\\[5pt]
 \phi^1= -\tfrac{3 \eps^3}{2 \rm Bo}(\partial_x \zeta)^2\partial_{xx}\zeta& \mbox{on}\quad \mathbb{T}\times\{0\},\\[5pt]
\frac{1}{1+\eps\zeta}\partial_z \phi^1=0&\mbox{on}\quad \mathbb{T}\times\{-1\},
\end{cases}
\end{equation}
 and now the refined remainder potential 
$\widetilde{\phi}^\mu_{\rm ref}$ satisfies

\begin{equation}\label{laplace_phimu_ref_ill}
\begin{cases}
\nabla^\mu \cdot P(\Sigma)\nabla^\mu \widetilde{\phi}^\mu_{\rm ref}=-\mu^2 A_\zeta(\partial_x,\partial_z)\phi^1&\mbox{in}\quad \mathcal{S},
\\[5pt]
\widetilde{\phi}^\mu_{\rm ref}= \tfrac{\eps}{\mathrm{Bo}}( F_{\eps\sqrt{\mu}}(\partial_x \zeta) +\tfrac{3}{2}\eps^2\mu (\partial_x\zeta)^2)\partial_{xx} \zeta 
& \mbox{on}\quad \mathbb{T}\times\{0\},\\[5pt]
\frac{1}{1+\eps\zeta}\partial_z \widetilde{\phi}^\mu_{\rm ref}=0&\mbox{on}\quad \mathbb{T}\times\{-1\},
\end{cases}
\end{equation}Notice that in \eqref{ansatz-refined} we have used that $\phi^{1/2}$ identically vanishes. Recalling the definition of $A_\zeta(\partial_x, \partial_z)$, the source term of the elliptic problem \eqref{laplace_phi1} reads
\begin{equation}\label{Asigma}\begin{aligned}
	A_\zeta(\partial_x,\partial_z)\phi^0=A_\zeta(\partial_x,\partial_z)(\eps(\zeta+\tfrac{1}{\rm Bo}\partial_{xx}\zeta))=\eps(1+\eps\zeta)(\partial_{xx}\zeta + \tfrac{1}{\rm Bo}\partial_{xxxx}\zeta)
	\end{aligned}
\end{equation}
after having used \eqref{phi0}.
Therefore, the potential $\phi^1$ can be explicitly determined and it reads 
\begin{equation}\label{phi1}
\begin{aligned}
\phi^1(x,z)=-\eps\left(\tfrac{z^2}{2}+z\right)(1+\eps \zeta)^2 (\partial_{xx}\zeta + \tfrac{1}{\rm Bo}\partial_{xxxx}\zeta) -\tfrac{3 \eps^3}{2 \rm Bo}(\partial_x \zeta)^2\partial_{xx}\zeta.
\end{aligned}
\end{equation}
We inject the refined ansatz \eqref{ansatz-refined} into \eqref{evoeq_flat} together with \eqref{phi0}, \eqref{phi1/2} and \eqref{phi1} to recast the one-phase unstable Muskat problem as
\begin{equation}\label{HSeq_dimless2}
\begin{aligned}
&\partial_t  \zeta + \partial_x\left((1+\eps \zeta)(\partial_x \zeta + \tfrac{1}{\mathrm{Bo}}\partial_{xxx} \zeta)\right)  \\[5pt]&+ \tfrac{\mu}{3}\partial_{xx}\left((1+\eps\zeta)^3 (\partial_{xx}\zeta+ \tfrac{1}{\rm Bo}\partial_{xxxx}\zeta)\right) -\tfrac{3\eps^2\mu}{2\rm Bo}\partial_x \left((1+\eps\zeta) \partial_x ((\partial_x\zeta)^2\partial_{xx}\zeta)\right) \\[5pt]&=-\tfrac{1}{\eps}\partial_x\left(\int_{-1}^0\left((1+\eps\zeta)\partial_x \widetilde{\phi}^\mu_{\rm ref} - \eps(z+1)\partial_x\zeta \partial_z\widetilde{\phi}^\mu_{\rm ref}\right)dz\right)
\end{aligned}
\end{equation} with $\widetilde{\phi}^\mu$ satisfying the elliptic problem \eqref{laplace_phimu_ref_ill}. From the definition of the remainder potential, we know that $\widetilde{\phi}^\mu$ is of order $O(\mu^{3/2})$. Therefore, taking into account the terms of order $O(\mu)$ and dropping the terms of order $O(\mu^{3/2})$, we obtain the following sixth-order thin film approximation:
\begin{equation}\label{O(mu)_ill}
\begin{aligned}
&\partial_t  \zeta + \partial_x\left((1+\eps \zeta)(\partial_x \zeta + \tfrac{1}{\mathrm{Bo}}\partial_{xxx} \zeta)\right)\\[5pt]& + \tfrac{\mu}{3}\partial_{xx}\left((1+\eps\zeta)^3 (\partial_{xx}\zeta+ \tfrac{1}{\rm Bo}\partial_{xxxx}\zeta)\right) -\tfrac{3\eps^2\mu}{2\rm Bo}\partial_x \left((1+\eps\zeta) \partial_x ((\partial_x\zeta)^2\partial_{xx}\zeta)\right) =0.
\end{aligned}
\end{equation}
 It is quite surprising that the thin-film approximation at this order leads to an ill-posed sixth-order equation. Indeed, one can notice that at the linear level \eqref{O(mu)_ill} reads
	$$\partial_t \zeta + \tfrac{\mu}{3\rm Bo} \partial_{xxxxxx}\zeta + \mbox{low order terms}=0$$ whose leading order term has a bad sign. Indeed, in the Fourier formulation it gives
	$$\partial_t \widehat{\zeta} = \tfrac{\mu}{3\rm Bo}|n|^6 \widehat{\zeta}+ \mbox{low order terms}$$ and the solution does not belong to any Sobolev or Wiener space.
	\begin{remark}
		The leading order term in \eqref{O(mu)_ill} is due to the surface tension term in the boundary condition of the Hele-Shaw problem and not from the gravity term. This means that one faces the same bad sign either in the stable or in the unstable case using this second-order thin-film approximation.
	\end{remark}
	At a first look, the bad sign comes surprisingly since one would expect the presence of parabolicity in the equation after a second-order approximation as it occurs in the full unstable Hele-Shaw problem studied in \cite{GG-BS2020}. However it is known that, depending on the type of approximation one considers, high order truncations can lead to ill-posed asymptotic models. For instance, this is the case of some truncated series models for the water waves problem discussed in \cite{AmbroBonaNich2014}.\\
	The obstacle represented by the surface tension term motivates us to change the physical regime of the problem we are studying. More precisely, we consider now the case when 
	$$\frac{1}{\rm Bo}=\frac{\sqrt{\mu}}{\rm bo}=O(\sqrt{\mu})$$with the rescaled Bond number $$\mathrm{bo}=\frac{\rho g H L}{\gamma}.$$ 
	\begin{remark}
We highlight that this configuration is still physically reasonable when dealing with groundwater filtration. For instance, considering the characteristic scales $H\sim 0.1 - 1$ mm and $L\sim 5$ mm, the situation studied would belong to this regime. 
	\end{remark}
In this new regime we write the potential $\phi^\mu$ as
\begin{equation}\label{ansatz-refined-new}
	\phi^\mu=\phi^0 + \sqrt{\mu}\phi^{1/2} + \mu \phi^1 + \widetilde{\phi}^\mu_{\rm ref}
\end{equation}where now $\phi^0= \eps\zeta$, $\phi^{1/2}= \tfrac{\eps}{\rm bo}\partial_{xx}\zeta,$ $ \phi^1$ satisfies
\begin{equation}\label{laplace_phi1new}
\begin{cases}
\frac{1}{1+\eps \zeta} \partial_{zz}\phi^1= -A_\zeta(\partial_x,\partial_z)\phi^0&\mbox{in}\quad \mathcal{S},\\[5pt]
\phi^1= 0& \mbox{on}\quad \mathbb{T}\times\{0\},\\[5pt]
\frac{1}{1+\eps\zeta}\partial_z \phi^1=0&\mbox{on}\quad \mathbb{T}\times\{-1\},
\end{cases}
\end{equation}and the refined remainder potential 
$\widetilde{\phi}^\mu_{\rm ref}$ satisfies
\begin{equation}\label{laplace_phimu_ref}
\begin{cases}
\nabla^\mu \cdot P(\Sigma)\nabla^\mu \widetilde{\phi}^\mu_{\rm ref}=-\mu^{3/2}A_\zeta(\partial_x, \partial_z)\phi^{1/2}-\mu^2 A_\zeta(\partial_x,\partial_z)\phi^1&\mbox{in}\,\, \mathcal{S},
\\[5pt]
\widetilde{\phi}^\mu_{\rm ref}= \tfrac{\eps\sqrt{\mu}}{\mathrm{bo}} F_{\eps\sqrt{\mu}}(\partial_x \zeta) \partial_{xx} \zeta 
& \mbox{on}\,\, \mathbb{T}\!\times\!\{0\},\\[5pt]
\frac{1}{1+\eps\zeta}\partial_z \widetilde{\phi}^\mu_{\rm ref}=0&\mbox{on}\,\, \mathbb{T}\!\times\!\{-1\}.
\end{cases}
\end{equation}The explicit expressions of $\phi^0$ and $\phi^{1/2}$ are obtained by identifying the terms of the same order in the expansion of the original elliptic problem on $\phi^\mu$. This leads to systems analogous to \eqref{laplace_phi0}-\eqref{laplace_phi1/2} that admit explicit solutions. Moreover, it is straightforward that
\eqref{laplace_phi1new}  admits the unique explicit solution
$$\phi_1(x,z)=-\eps\left(\tfrac{z^2}{2}+z\right)(1+\eps\zeta)^2\partial_{xx}\zeta.$$
Injecting the ansatz \eqref{ansatz-refined-new} and the new expressions of $\phi^0$, $\phi^{1/2}$ and $\phi^1$ into \eqref{evoeq_flat}, we are able to rewrite the one-phase Muskat problem as
		\begin{equation}
		\begin{aligned}
			&\partial_t  \zeta + \partial_x\left((1+\eps \zeta)(\partial_x \zeta + \tfrac{\sqrt{\mu}}{\mathrm{bo}}\partial_{xxx} \zeta)\right)  + \tfrac{\mu}{3}\partial_{xx}\left((1+\eps\zeta)^3 \partial_{xx}\zeta\right) \\[5pt]&=-\tfrac{1}{\eps}\partial_x\left(\int_{-1}^0\left((1+\eps\zeta)\partial_x \widetilde{\phi}^\mu_{\rm ref} - \eps(z+1)\partial_x\zeta \partial_z\widetilde{\phi}^\mu_{\rm ref}\right)dz\right)
		\end{aligned}
		\end{equation}
		where $\widetilde{\phi}^\mu_{\rm ref}$ satisfies \eqref{laplace_phimu_ref}.  Dropping the terms of order $O(\mu^{3/2})$, we obtain the following fourth-order refined thin film equation 
			\begin{equation}\label{O(mu)}
		\begin{aligned}
		\partial_t  \zeta + \partial_x\left((1+\eps \zeta)(\partial_x \zeta + \tfrac{\sqrt{\mu}}{\mathrm{bo}}\partial_{xxx} \zeta)\right)  + \tfrac{\mu}{3}\partial_{xx}\left((1+\eps\zeta)^3 \partial_{xx}\zeta\right)=0. 
		\end{aligned}
		\end{equation}Notice that in the new regime the approximation of order $O(\mu^{3/2})$ does not capture the critical sixth-order term of \eqref{O(mu)_ill}. However, there is an additional term that refines the standard thin film equation \eqref{thin-film}. Let us now investigate the existence of the solution to \eqref{O(mu)}. First we introduce the definition of its weak solution.

\begin{definition}\label{weakdefO(mu)}
	We say that $\zeta\in L^\infty([0,T],L^\infty(\mathbb{T}))\cap L^1([0,T],W^{3,\infty}(\mathbb{T})) $ is a weak solution to \eqref{O(mu)} with initial data $\zeta_0\in L^\infty(\mathbb{T})$ if for any $\varphi\in C^\infty([0,T]\times \mathbb{T})$ and $t\in[0,T]$ the following equality holds:
	\begin{equation}
	\begin{aligned}
	&\int_\mathbb{T} \zeta(t,x)\varphi(t,x) dx-	\int_\mathbb{T} \zeta_0(x)\varphi(0,x)dx - \int_0^t\int_\mathbb{T} \zeta(t')\partial_t \varphi(t',x)dx dt'\\
	&\quad=\int_0^t\int_\mathbb{T}(1+\eps \zeta(t,x)) \partial_x  \zeta(t',x)\partial_x\varphi(t',x)dxdt'\\&\quad\quad +\frac{\sqrt{\mu}}{\rm bo}\int_0^t\int_\mathbb{T}(1+\eps \zeta(t,x)) \partial_{xxx} \zeta(t',x)\partial_x\varphi(t',x)dxdt' \\&\quad\quad-\frac{\mu}{3}\int_0^t\int_\mathbb{T}(1+\eps \zeta(t,x))^3 \partial_{xx} \zeta(t',x)\partial_{xx}\varphi(t',x)dxdt'
	\end{aligned}
	\end{equation}
\end{definition}

We are now able to show the existence of the solution to \eqref{O(mu)} in Wiener spaces.

\begin{theorem}\label{theoO(mu)}
	Let $\mathrm{bo}>0$ small enough and $0<\mu<1$ such that $\tfrac{\sqrt{\mu}}{\mathrm{bo}} +\tfrac{\mu}{3}>1$ and $\zeta_0\in \wiener[0]_0(\mathbb{T})$  a zero-mean function such that 
	\begin{equation}\label{smallnessTF_ref}
	|\zeta_0|_{0,0}< \tfrac{\tfrac{\sqrt{\mu}}{\mathrm{bo}}+\tfrac{\mu}{3}-1}{128\eps \left(\tfrac{\sqrt{\mu}}{\mathrm{bo}}+\tfrac{55}{6}\mu+1\right)}.\end{equation} Then there exists a global weak solution to \eqref{O(mu)} in the sense of Definition \ref{weakdefO(mu)} with initial data $\zeta_0$ which becomes instantaneously analytic in a growing strip in the complex plane. In particular, for $$\nu\in \left[0,\tfrac{1}{2}(\tfrac{\sqrt{\mu}}{\mathrm{bo}}+\tfrac{\mu}{3}-1)\right),$$ we have  $$\zeta\in L^\infty([0,T];\wiener[0]_{\nu t})\cap L^1([0,T];\wiener[4]_{\nu t})$$
	for all $T>0$ and the solution $\zeta$ satisfies for any $t\in[0,T]$ the energy inequality
	\begin{equation}\label{energy_estimate_ref}
	|\zeta(t)|_{0,\nu t} + \tfrac{1}{64} \left(\tfrac{\sqrt{\mu}}{\mathrm{bo}}+\tfrac{\mu}{3}-1\right)\int_0^t|\zeta(t')|_{4,\nu t'}dt'\leq	|\zeta_0|_{0,0}
	\end{equation} together with the decay
	\begin{equation}\label{decay_ref}
	|\zeta(t)|_{0,\nu t} \leq 	|\zeta_0|_{0,0} \ e^{-\frac{t}{64}\left(\tfrac{\sqrt{\mu}}{\mathrm{bo}}+\tfrac{\mu}{3}-1\right)}.
	\end{equation}
\end{theorem}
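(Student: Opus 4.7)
The plan is to follow closely the blueprint of Theorem \ref{theoTF}, treating the new fourth-order dispersive term $\tfrac{\mu}{3}\partial_{xx}((1+\eps\zeta)^3\partial_{xx}\zeta)$ on the same footing as the surface tension term. First I would expand $(1+\eps\zeta)^3=1+3\eps\zeta+3\eps^2\zeta^2+\eps^3\zeta^3$ and isolate the linear part $\tfrac{\mu}{3}\partial_{xxxx}\zeta$, so that \eqref{O(mu)} rewrites in the form
\begin{equation*}
\partial_t\zeta +\bigl(\tfrac{\sqrt{\mu}}{\mathrm{bo}}+\tfrac{\mu}{3}\bigr)\partial_{xxxx}\zeta + \partial_{xx}\zeta = \mathrm{NL}(\zeta),
\end{equation*}
where $\mathrm{NL}(\zeta)$ collects the quadratic, cubic and quartic terms produced by the expansion of the two dispersive fluxes and of the convective term $\partial_x((1+\eps\zeta)\partial_x\zeta)$. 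Passing to Fourier series, multiplying by $\overline{\widehat{\zeta}}$, dividing by $|\widehat{\zeta}|$, taking the real part and summing the resulting pointwise Fourier bound against $(1+|n|)^s e^{\nu t|n|}$ gives, exactly as in \eqref{apriori},
\begin{equation*}
\tfrac{d}{dt}|\zeta|_{0,\nu t}+\bigl(\tfrac{\sqrt{\mu}}{\mathrm{bo}}+\tfrac{\mu}{3}-1-\nu\bigr)\sum_n|n|^4 e^{\nu t|n|}|\widehat{\zeta}(n)|\leq |\mathrm{NL}(\zeta)|_{0,\nu t},
\end{equation*}
so that choosing $\nu\in\bigl[0,\tfrac12(\tfrac{\sqrt{\mu}}{\mathrm{bo}}+\tfrac{\mu}{3}-1)\bigr)$ and using the norm equivalence \eqref{equiv-norms} yields the parabolic gain $\tfrac{1}{32}(\tfrac{\sqrt{\mu}}{\mathrm{bo}}+\tfrac{\mu}{3}-1)|\zeta|_{4,\nu t}$ on the left-hand side.

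The next step is to estimate every nonlinear term of $\mathrm{NL}(\zeta)$ in $\mathbb{A}^0_{\nu t}$ through the sharp product estimate \eqref{est_product_wie}, the power estimate \eqref{est_wie_power} and the interpolation of Proposition \ref{interpolation}, with the target shape $C(\mu,\eps)|\zeta|_{0,\nu t}|\zeta|_{4,\nu t}$. The quadratic contributions from the gravity--capillarity block were already treated in the proof of Theorem \ref{theoTF} and sum up to $2\eps(\tfrac{\sqrt{\mu}}{\mathrm{bo}}+1)|\zeta|_{0,\nu t}|\zeta|_{4,\nu t}$. For the new term one expands
\begin{equation*}
\tfrac{\mu}{3}\partial_{xx}\!\bigl((3\eps\zeta+3\eps^2\zeta^2+\eps^3\zeta^3)\partial_{xx}\zeta\bigr)
\end{equation*}
using the Leibniz rule and bounds every summand of the form $\zeta^k\partial_x^i\zeta\,\partial_x^j\zeta$ with $k+i+j=4$ through $|\zeta|^{k+1}_{0,\nu t}|\zeta|_{4,\nu t}$ or by interpolating to $|\zeta|_{0,\nu t}|\zeta|_{4,\nu t}$ after paying a factor $|\zeta|_{0,\nu t}^k$. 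Adding all the numerical coefficients (they come from the binomials in the Leibniz rule and the factors $3,3,1$ from $(1+\eps\zeta)^3$) produces the constant $\tfrac{55}{6}\,\mu\,\eps$ that appears in the hypothesis \eqref{smallnessTF_ref}; this bookkeeping is the main technical point of the proof.

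Gathering the estimates, one obtains
\begin{equation*}
\tfrac{d}{dt}|\zeta|_{0,\nu t}+\tfrac{1}{32}\bigl(\tfrac{\sqrt{\mu}}{\mathrm{bo}}+\tfrac{\mu}{3}-1\bigr)|\zeta|_{4,\nu t}\leq 2\eps\bigl(\tfrac{\sqrt{\mu}}{\mathrm{bo}}+\tfrac{55}{6}\mu+1\bigr)|\zeta|_{0,\nu t}|\zeta|_{4,\nu t},
\end{equation*}
and the smallness assumption \eqref{smallnessTF_ref} together with a continuity/bootstrap argument shows that $|\zeta(t)|_{0,\nu t}$ stays under the threshold $\tfrac{\tfrac{\sqrt{\mu}}{\mathrm{bo}}+\tfrac{\mu}{3}-1}{128\eps(\tfrac{\sqrt{\mu}}{\mathrm{bo}}+\tfrac{55}{6}\mu+1)}$ for all times, which in particular guarantees $1+\eps\zeta(t)\geq \tfrac{127}{128}>0$ so that no pinch-off occurs and the bound absorbs half of the dissipation to yield the closed inequality
\begin{equation*}
\tfrac{d}{dt}|\zeta|_{0,\nu t}+\tfrac{1}{64}\bigl(\tfrac{\sqrt{\mu}}{\mathrm{bo}}+\tfrac{\mu}{3}-1\bigr)|\zeta|_{4,\nu t}\leq 0.
\end{equation*}

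Finally, the existence of a global weak solution in the sense of Definition \ref{weakdefO(mu)} is obtained by a standard Galerkin/mollification approximation scheme, in the spirit of \cite{GG-BS2020, BG-B2019}: one regularises the initial data, solves the resulting smooth equation, propagates the above a priori estimate uniformly and passes to the limit (the continuous embedding $\mathbb{A}^4_{\nu t}\hookrightarrow W^{3,\infty}$ provides compactness and the regularity needed to make the weak formulation meaningful). Integrating the closed inequality in time yields the energy inequality \eqref{energy_estimate_ref}, and inserting the trivial embedding $\mathbb{A}^4_{\nu t}\subset \mathbb{A}^0_{\nu t}$ into the same inequality and applying Gronwall's lemma produces the exponential decay \eqref{decay_ref}. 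The hardest part is unquestionably the careful accounting of all the new cubic and quartic contributions arising from $(1+\eps\zeta)^3\partial_{xx}\zeta$ that fixes the sharp constant $\tfrac{55}{6}$; once this is done the remainder of the argument is parallel to the proof of Theorem \ref{theoTF}.
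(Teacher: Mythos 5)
Your proposal is correct and follows essentially the same route as the paper: the same Fourier-side a priori estimate in the time-growing Wiener norm with parabolic gain $\tfrac{1}{32}(\tfrac{\sqrt{\mu}}{\mathrm{bo}}+\tfrac{\mu}{3}-1)|\zeta|_{4,\nu t}$, the same product/power/interpolation estimates to reduce every nonlinear term to the shape $|\zeta|_{0,\nu t}|\zeta|_{4,\nu t}$, closure via the smallness assumption, an omitted standard approximation argument for existence, and Gronwall for the decay. The only cosmetic difference is that the paper organizes the nonlinearity into quadratic, cubic and quartic blocks $\mathrm{NL}_2,\mathrm{NL}_3,\mathrm{NL}_4$ and obtains the constant $\tfrac{55}{6}\mu$ by bounding these blocks ($4\mu$, $9\mu$, $\tfrac{16}{3}\mu$ after using $\eps|\zeta|_{0,\nu t}<1$), which is exactly the bookkeeping you describe when you absorb the extra factors $|\zeta|_{0,\nu t}^k$.
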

\begin{proof}
	Analogously to the proof of Theorem \ref{theoTF}, we derive a priori energy estimate and the existence is obtained by using a standard approximation argument that we omit. Considering the Fourier series of \eqref{O(mu)}, we get 
	\begin{equation}
		\partial_t \widehat{\zeta} +\left(\tfrac{\sqrt{\mu}}{\mathrm{bo}} +\tfrac{\mu}{3}\right)|n|^4 \widehat{\zeta} - |n|^2 \widehat{\zeta} = \widehat{\rm NL_2(\zeta)} + \widehat{\rm NL_3(\zeta)} +\widehat{\rm NL_4(\zeta)} ,
	\end{equation}where $\widehat{\zeta}$ denotes the Fourier coefficient of $\zeta$ and $\rm NL_2(\zeta)$, $\rm NL_3(\zeta)$, $\rm NL_4(\zeta)$ denote respectively the quadratic, the cubic and quartic nonlinear terms, namely
	\begin{equation*}\begin{aligned}
		{\rm NL_2(\zeta)}=  -\eps\bigg[&\left(\tfrac{\sqrt{\mu}}{\mathrm{bo}} + 2\mu\right) \partial_x\zeta \partial_{xxx}\zeta  +\left(\tfrac{\sqrt{\mu}}{\mathrm{bo}} + \mu\right) \zeta \partial_{xxxx}\zeta\\[5pt]&+\mu (\partial_{xx}\zeta)^2 +(\partial_x\zeta)^2 + \zeta\partial_{xx}\zeta\bigg], 
		\end{aligned}
	\end{equation*}
	\begin{equation*}\begin{aligned}
{	\rm NL_3(\zeta)}= -\mu\eps^2\left(2(\partial_x\zeta)^2 \partial_{xx}\zeta + 2\zeta(\partial_{xx}\zeta)^2 +4\zeta\partial_x\zeta\partial_{xxx}\zeta+ \zeta^2\partial_{xxxx}\zeta\right)
	\end{aligned}
	\end{equation*}
	and 	\begin{equation*}\begin{aligned}
{	\rm NL_4(\zeta)}=  -\mu \eps^3 \left( 2\zeta(\partial_x\zeta)^2 \partial_{xx}\zeta + \zeta^2(\partial_{xx}\zeta)^2 +2\zeta^2\partial_x\zeta\partial_{xxx}\zeta +\tfrac{1}{3}\zeta^3\partial_{xxxx}\zeta \right) 
	\end{aligned}
	\end{equation*}
	Choosing $\nu\in \left[0,\tfrac{1}{2}(\tfrac{\sqrt{\mu}}{\mathrm{bo}}+\tfrac{\mu}{3}-1)\right)$, it yields
	\begin{equation}
	\frac{d}{dt} | \zeta|_{0,\nu t} +\tfrac{1}{32}(\tfrac{\sqrt{\mu}}{\mathrm{bo}}+\tfrac{\mu}{3}-1)|\zeta|_{4,\nu t}\leq |\rm NL_2(\zeta)|_{0,\nu t}+ |\rm NL_3(\zeta)|_{0,\nu t}+ |\rm NL_4(\zeta)|_{0,\nu t}.\\[5pt]
	\label{apriori_ref}
	\end{equation} 
	Using Proposition \ref{productWie} for $s=0$, $\lambda=\nu t$ and  Proposition \ref{interpolation}, we have
	\begin{equation}\label{NL2_est}
	|\rm {NL_2(\zeta)}|_{0,\nu t} \leq 2\left(\tfrac{\sqrt{\mu}}{\rm bo} +2\mu +1\right)\eps|\zeta|_{0,\nu t}|\zeta|_{4,\nu t}, 
	\end{equation} 
		\begin{equation}\label{NL3_est}
	|\rm {NL_3(\zeta)}|_{0,\nu t} \leq 9\mu\eps^2|\zeta|^2_{0,\nu t}|\zeta|_{4,\nu t},
	\end{equation} 
	and 
		\begin{equation}\label{NL4_est}
	|\rm {NL_4(\zeta)}|_{0,\nu t} \leq \tfrac{16}{3}\mu\eps^3|\zeta|^3_{0,\nu t}|\zeta|_{4,\nu t}.
	\end{equation} 
	Injecting \eqref{NL2_est}-\eqref{NL4_est} into \eqref{apriori_ref}, we obtain
		\begin{equation*}
		\begin{aligned}
	\frac{d}{dt} | \zeta|_{0,\nu t} &+\tfrac{1}{32}(\tfrac{\sqrt{\mu}}{\mathrm{bo}}+\tfrac{\mu}{3}-1)|\zeta|_{4,\nu t}\\[5pt]&\leq \eps\left(2\left(\tfrac{\sqrt{\mu}}{\rm bo} +2\mu +1\right) + 9\mu\eps|\zeta|_{0,\nu t} + \tfrac{16}{3}\mu\eps^2|\zeta|^2_{0,\nu t} \right) |\zeta|_{0,\nu t}|\zeta|_{4,\nu t}
	\end{aligned}
	\end{equation*} 
	and, if we consider $\eps|\zeta|_{0,\nu t} < 1$, it implies 
		\begin{equation*}
	\begin{aligned}
	\frac{d}{dt} | \zeta|_{0,\nu t} &+\tfrac{1}{32}(\tfrac{\sqrt{\mu}}{\mathrm{bo}}+\tfrac{\mu}{3}-1)|\zeta|_{4,\nu t}\leq 2\eps\left(\tfrac{\sqrt{\mu}}{\rm bo} +\tfrac{55}{6}\mu +1\right)  |\zeta|_{0,\nu t}|\zeta|_{4,\nu t}.
	\end{aligned}
	\end{equation*} 
	Therefore, assuming $|\zeta|_{0,\nu t} < \tfrac{\tfrac{\sqrt{\mu}}{\mathrm{bo}}+\tfrac{\mu}{3}-1}{128\eps \left(\tfrac{\sqrt{\mu}}{\mathrm{bo}}+\tfrac{55}{6}\mu+1\right)},$ we are able to close the estimate and obtain 
		\begin{equation}\label{enest_ref}
	\begin{aligned}
	\frac{d}{dt} | \zeta|_{0,\nu t} &+\tfrac{1}{64}(\tfrac{\sqrt{\mu}}{\mathrm{bo}}+\tfrac{\mu}{3}-1)|\zeta|_{4,\nu t}\leq 0
	\end{aligned}
	\end{equation} 
	Following the argument used in the proof of Theorem \ref{theoTF}, one has that \eqref{enest_ref} is global in time and implies \eqref{energy_estimate_ref} and \eqref{decay_ref}.
\end{proof}

 \section{Elliptic estimate for the remainder potential}\label{ellipest_sec}
 In this section we focus on the remainder potential $\widetilde{\phi}^\mu$ in the ansatz \eqref{ansatz}-\eqref{ansatz-refined} we have previously introduced in \eqref{evoeq_flat}-\eqref{ellipro_flat} to obtain the thin film approximations \eqref{thin-film}-\eqref{O(mu)}. Our aim is to derive an elliptic estimate that we will use in the next section when we will rigorously justify the thin-film approximations derived in this paper.\\
 On one hand, using \eqref{ansatz}  the remainder potential satisfies the elliptic problem \eqref{laplace_phimu}. On the other hand, using \eqref{ansatz-refined} the remainder potential satisfies the elliptic problem \eqref{laplace_phimu_ref}. One can notice that both elliptic problems have the same structure 
 \begin{equation}\label{laplace_phimu2}
 \begin{cases}
 \nabla^\mu \cdot P(\Sigma)\nabla^\mu \widetilde{\phi}^\mu=f&\mbox{in}\quad \mathcal{S},\\[5pt]
 \widetilde{\phi}^\mu= h  & \mbox{on}\quad \mathbb{T}\times\{0\},\\[5pt]
\partial_z \widetilde{\phi}^\mu=0&\mbox{on}\quad \mathbb{T}\times\{-1\}.
 \end{cases}
 \end{equation}for different source terms $f$. In order to derive an elliptic estimate for $\widetilde{\phi}^\mu$, let us introduce the following proposition stating the existence in Wiener-Sobolev spaces of the solution to a Poisson problem with a source term that is a sum of a divergence form term and a function.
 
 \begin{proposition}\label{ellipestimate}
 	Let  $g\in \mathcal{A}_\lambda^{s+1,1}$, $f\in \mathcal{A}_\lambda^{s,0}$ and $h\in \mathbb{A}^{s}_\lambda$ with $s\geq 0$ and $\lambda\geq 0.$ There exists a unique $\varphi$ solution to 
 	\begin{equation}\label{ellipest}
 	\begin{cases}
 	\Delta^\mu \varphi= \nabla^\mu \cdot g + f&\mbox{in}\quad \mathcal{S},\\
 	\varphi=h& \mbox{on}\quad \mathbb{T}\times\{0\},\\
 	\partial_z \varphi =0&\mbox{on}\quad \mathbb{T}\times\{-1\}.
 	\end{cases}
 	\end{equation} 
 	Moreover, such solution satisfies the estimate 
 	\begin{equation}\label{ellip_estimate}
 		\|\nabla^\mu \varphi\|_{\mathcal{A}_\lambda^{s,0}}\leq C \left( \|g\|_{\mathcal{A}_\lambda^{s,0}} + \|f\|_{\mathcal{A}_\lambda^{s,0}} + |h|_{s,\lambda}\right)
 	\end{equation}with $C=10.$
 \end{proposition}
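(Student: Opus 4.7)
The plan is to diagonalize the operator in the horizontal variable via Fourier series, reducing \eqref{ellipest} to a family of two-point boundary value problems in $z$ indexed by $n\in\mathbb{Z}$, each of which can be solved explicitly using the Green's function of a one-dimensional operator with mixed Dirichlet-Neumann boundary conditions. Writing $\varphi,g=(g_1,g_2),f,h$ as Fourier series in $x$, for each $n\in\mathbb{Z}$ the equation becomes
\begin{equation*}
\partial_{zz}\widehat{\varphi}(n,z)-\mu n^2 \widehat{\varphi}(n,z)=i\sqrt{\mu}\,n\,\widehat{g_1}(n,z)+\partial_z \widehat{g_2}(n,z)+\widehat{f}(n,z),
\end{equation*}
with $\widehat{\varphi}(n,0)=\widehat{h}(n)$ and $\partial_z\widehat{\varphi}(n,-1)=0$. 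Existence and uniqueness follow mode-by-mode from standard ODE theory, and the mode $n=0$ is handled directly by two integrations in $z$ (with the Neumann condition used to fix one constant and the Dirichlet one to fix the other), the contribution $\sqrt{\mu}\cdot 0\cdot\widehat{\varphi}(0,\cdot)$ being identically zero.

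For $n\neq 0$, set $\Lambda_n=\sqrt{\mu}|n|$. The homogeneous ODE admits $u_1(z)=\cosh(\Lambda_n(z+1))$, satisfying the Neumann condition at $z=-1$, and $u_2(z)=\sinh(\Lambda_n z)$, satisfying the Dirichlet condition at $z=0$, with constant Wronskian $\Lambda_n\cosh(\Lambda_n)$. I would then write the solution as
\begin{equation*}
\widehat{\varphi}(n,z)=\widehat{h}(n)\frac{\cosh(\Lambda_n(z+1))}{\cosh(\Lambda_n)}+\int_{-1}^0 G_n(z,z')\,\bigl[i\sqrt{\mu}\,n\,\widehat{g_1}(n,z')+\partial_{z'}\widehat{g_2}(n,z')+\widehat{f}(n,z')\bigr]\,dz',
\end{equation*}
with the explicit Green's function $G_n(z,z')$ built from $u_1,u_2$. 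The derivative source $\partial_{z'}\widehat{g_2}$ is handled by integration by parts in $z'$: the boundary term at $z'=0$ vanishes because $G_n(z,0)=0$ (Dirichlet), while the one at $z'=-1$ is controlled via the trace bound
$$|\widehat{g_2}(n,-1)|\leq \int_{-1}^0 |\widehat{g_2}(n,s)|\,ds+\int_{-1}^0 |\partial_s\widehat{g_2}(n,s)|\,ds,$$
which is exactly what the $\mathcal{A}^{s+1,1}_\lambda$ regularity of $g$ provides after summation in $n$.

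Differentiating the representation formula gives explicit integral kernels for $\partial_z\widehat{\varphi}(n,z)$ and $\sqrt{\mu}\,n\,\widehat{\varphi}(n,z)$, each of them essentially of the form $e^{-\Lambda_n|z-z'|}$ times coefficients bounded by a universal constant. I would integrate in $z\in(-1,0)$ and apply Young's convolution inequality in $z'$ to bound every such contribution by the $L^1_z$ norm of the corresponding Fourier-side source, uniformly in $n$. Multiplying by $(1+|n|)^s e^{\lambda|n|}$ and summing over $n\in\mathbb{Z}$ then delivers \eqref{ellip_estimate} after collecting the contributions from the boundary lift, the divergence source $\partial_z g_2$, the source $\sqrt{\mu}\,\partial_x g_1$ and the source $f$.

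The main obstacle will be keeping the constants tight enough to reach the explicit value $C=10$ asserted in the proposition. This requires repeatedly using the addition formulas $\cosh(a)\sinh(b)\pm\sinh(a)\cosh(b)=\sinh(a\pm b)$ and $\cosh(a)\cosh(b)\pm\sinh(a)\sinh(b)=\cosh(a\pm b)$ to simplify $\partial_z G_n$, $\partial_{z'} G_n$, and the Poisson-type kernel $\cosh(\Lambda_n(z+1))/\cosh(\Lambda_n)$, and then extracting sharp decaying exponentials whose $L^1_{z'}$ norm is bounded by $1/\Lambda_n$ (which compensates the factor $\sqrt{\mu}\,n$ in front of $\widehat{g_1}$). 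A secondary difficulty is to verify that the boundary contribution from the integration by parts on $\partial_{z'}\widehat{g_2}$, once bounded via the trace inequality above, only adds a universal constant to the estimate rather than degrading the leading coefficient; this is the reason for the apparent mismatch between the regularity hypothesis $g\in\mathcal{A}^{s+1,1}_\lambda$ and the appearance of only $\|g\|_{\mathcal{A}^{s,0}_\lambda}$ on the right-hand side of \eqref{ellip_estimate}.
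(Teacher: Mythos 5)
Your scheme is, in substance, the same as the paper's: Fourier diagonalization in $x$, an explicit representation of $\widehat{\varphi}(n,\cdot)$ by hyperbolic kernels (the paper's $\Pi_1,\Pi_2$ are precisely the two halves of your Green's function built from $u_1(z)=\cosh(\Lambda_n(z+1))$ and $u_2(z)=\sinh(\Lambda_n z)$, up to normalization), kernel bounds integrated in $z$ plus Young's inequality and weighted summation in $n$, and the identical boundary lift $\widehat{h}(n)\cosh(\Lambda_n(z+1))/\cosh(\Lambda_n)$; the paper simply quotes \cite{GG-BS2020} for the $g$-part instead of redoing the integration by parts, and treats the $f$-part by crude pointwise kernel bounds exactly as you intend.

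The genuine gap is your handling of the boundary term created by integrating $\partial_{z'}\widehat{g_2}$ by parts. At $z'=0$ it vanishes, but at $z'=-1$ you are left with $-G_n(z,-1)\,\widehat{g_2}(n,-1)$, and $G_n(z,-1)=\sinh(\Lambda_n z)/(\Lambda_n\cosh\Lambda_n)\neq 0$. Estimating $|\widehat{g_2}(n,-1)|$ by your trace inequality costs $\int_{-1}^0|\partial_z\widehat{g_2}(n,z)|\,dz$, so after summation your right-hand side contains $\|g\|_{\mathcal{A}^{s,1}_\lambda}$, which is \emph{not} among the terms in \eqref{ellip_estimate}; an $L^1_z$-type quantity like $\|g\|_{\mathcal{A}^{s,0}_\lambda}$ cannot control this trace (already for the mode $n=0$ one has $\partial_z\widehat{\varphi}(0,z)=\widehat{g_2}(0,z)-\widehat{g_2}(0,-1)+\int_{-1}^z\widehat{f}(0,r)\,dr$, so the trace enters irreducibly). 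The difference is not cosmetic: in Proposition \ref{ell_est} the estimate is applied with $g=-Q(\Sigma)\nabla^\mu\widetilde{\phi}^\mu$ and only an $\mathcal{A}^{s,0}_\lambda$-norm of this product can be absorbed via Proposition \ref{prop_Q}; an $\mathcal{A}^{s,1}_\lambda$-norm would involve two derivatives of $\widetilde{\phi}^\mu$ and the bootstrap would break. The correct resolution is structural rather than a trace bound: in the setting where the proposition is used one has $g_2|_{z=-1}=0$, since $Q_{21}=-\sqrt{\mu}\,\partial_x\sigma=-\eps\sqrt{\mu}(z+1)\partial_x\zeta$ and $\partial_z\widetilde{\phi}^\mu$ both vanish at $z=-1$; equivalently, the bottom condition should be read as the conormal condition $(\nabla^\mu\varphi-g)\cdot\mathbf{e}_z=0$, under which the integration by parts produces no boundary contribution at all. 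As written, your plan proves a strictly weaker inequality than \eqref{ellip_estimate}; you need to add the vanishing-trace observation (or hypothesis) to close it. By comparison, chasing the explicit constant $C=10$ is a minor matter: the paper itself inherits the kernel bounds from \cite{GG-BS2020} and does not track that constant sharply.
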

 \begin{proof}
 	The proof of this proposition is an adaptation of the proof of Theorem 3.1 of \cite{GG-BS2020}. Considering the Fourier series of the elliptic equation of \eqref{ellipest}, we obtain the sequence of ODEs
 	\begin{equation}
 	-\mu k^2 \widehat{\varphi}(k,z) + \partial_z^2\widehat{\varphi}(k,z) = i \sqrt{\mu} k \ \widehat{g_1}(k,z) + \partial_z \widehat{g_2}(k,z) + \widehat{f}(k,z), \qquad k\in \mathbb{Z}.
 	\end{equation}
 	Using the Dirichlet and Neumann boundary conditions of the elliptic problem, we can obtain the explicit solution to \eqref{ellipest}
 	
 	\begin{equation}
 	\begin{aligned}
 	\widehat{\varphi}(k,z)=\  &\tfrac{1}{\sqrt{\mu}|k|}\int_{-1}^{z} \Pi_1(\mu|k|, r, z) \left(i\sqrt{\mu}k \hat{g_1}(k,r) + \partial_r \hat{g_2}(k,r) + \hat{f}(k,r) \right)dr \\&+  \tfrac{1}{\sqrt{\mu}|k|}\int_z^0 \Pi_2 (\sqrt{\mu}|k|, r,z) \left(i\sqrt{\mu}k \hat{g_1}(k,r) + \partial_r \hat{g_2}(k,r) + \hat{f}(k,r) \right)ds\\& + \widehat{h}(k)\frac{e^{\sqrt{\mu}|k|(1+z)}+ e^{-\sqrt{\mu}|k|(1+z)}}{e^{\sqrt{\mu}|k|}+ e^{-\sqrt{\mu}|k|}}
 	\end{aligned}
 	\end{equation} with 
 	\begin{align*}
 	&	\Pi_1(\sqrt{\mu}|k|,s,z)=\frac{e^{\sqrt{\mu}|k|(1+s)}+ e^{-\sqrt{\mu}|k|(1+s)}}{e^{\sqrt{\mu}|k|}+ e^{-\sqrt{\mu}|k|}} \left(e^{\sqrt{\mu}|k|z}- e^{-\sqrt{\mu}|k|z}\right)\\[5pt]
 	&	\Pi_2(\sqrt{\mu}|k|,s,z)=e^{\sqrt{\mu}|k|(s-z)}- e^{-\sqrt{\mu}|k|(s-z)}
 	\end{align*}
 	Notice that $\Pi_{1,2}$ are the same functions as in \cite{GG-BS2020}. On the one hand, we can use the following estimates on $\Pi_{1}$ and $\Pi_{2}$  therein. For $(j,l)\in \mathbb{N}^2$ it holds
 	\begin{equation}\label{Pi1_bound}
 	\begin{aligned}
 	\int_{0}^{1} \|\mathbf{1}_{[-1,z]}(\cdot) \partial_z^j\partial_r^l \Pi_{1}(\sqrt{\mu}|k|,\cdot,z)\|_{L^\infty_r}dz \leq 2(\sqrt{\mu}|k|)^{j+l-1} 
 	\end{aligned}
 	\end{equation}
 		\begin{equation}\label{Pi2_bound}
 	\begin{aligned}
 	\int_{0}^{1} \|\mathbf{1}_{[z,0]}(\cdot) \partial_z^j\partial_r^l \Pi_{2}(\sqrt{\mu}|k|,\cdot,z)\|_{L^\infty_r}dz \leq \tfrac{5}{2}(\sqrt{\mu}|k|)^{j+l-1}
 	\end{aligned}
 	\end{equation} to obtain the bound for $\|\nabla^\mu \varphi\|_{\mathcal{A}^{s,0}_\lambda}$ with $\|g\|_{\mathcal{A}^{s,0}_\lambda}$ following the same argument as in \cite{GG-BS2020}. On the other hand, using the definition of $\Pi_{1,2}$ we know that $|\Pi_1(\sqrt{\mu}|k|,r,z)|\leq 1$ for $-1\leq r \leq z$  and $|\Pi_2(\sqrt{\mu}|k|,r,z)|\leq C$ for $z\leq r\leq 0$ and some constant $C>0.$ Using these two inequalities we get the bound for $\|\nabla^\mu \varphi\|_{\mathcal{A}^{s,0}_\lambda}$  with $\|f\|_{\mathcal{A}^{s,0}_\lambda}$.
  Let us detail only the bound coming from the non-homogeneous Dirichlet boundary condition. We have
 	
 	\begin{equation}
 	\begin{aligned}
 	\sqrt{\mu}|k| 	\int_{-1}^0 |\widehat{h}(k)|\frac{e^{\sqrt{\mu}|k|(1+z)}+ e^{-\sqrt{\mu}|k|(1+z)}}{e^{\sqrt{\mu}|k|}+ e^{-\sqrt{\mu}|k|}} &= \sqrt{\mu}|k| 	\int_{-1}^0 |\widehat{h}(k)| \frac{\cosh((1+z)\sqrt{\mu}|k|)}{\cosh(\sqrt{\mu}|k|)}dz \\[5pt]&
 	=|\hat{h}(k)|\tanh(\sqrt{\mu}|k|)\leq |\hat{h}(k)|
 	\end{aligned}
 	\end{equation}
 	and 
 	\begin{equation}
 	\begin{aligned}
 	&\int_{-1}^0 |\widehat{h}(k)|\frac{\left|\partial_z\left(e^{\sqrt{\mu}|k|(1+z)}+ e^{-\sqrt{\mu}|k|(1+z)}\right)\right|}{e^{\sqrt{\mu}|k|}+ e^{-\sqrt{\mu}|k|}} \\[5pt]&= \sqrt{\mu}|k| 	\int_{-1}^0 |\widehat{h}(k)| \frac{\sinh((1+z)\sqrt{\mu}|k|)}{\cosh(\sqrt{\mu}|k|)}dz 
 	=|\hat{h}(k)| \left(1- \frac{1}{\cosh(\sqrt{\mu}|k|)}\right)\leq |\hat{h}(k)|.
 	\end{aligned}
 	\end{equation}
 	These two inequalities give the bound for $\|\nabla^\mu \varphi\|_{\mathcal{A}^{s,0}_\lambda}$ with $|h|_{s,\lambda}$.  By linearity of the elliptic equation in \eqref{ellipest}, the estimate \eqref{ellip_estimate} follows putting together the three bounds obtained.
 \end{proof}
\begin{remark}Let us point out that the elliptic estimate \eqref{ellip_estimate} is not sharp. Indeed using \eqref{Pi1_bound}-\eqref{Pi2_bound} also for the bound of the part of $\widehat{\phi}$ containing $\widehat{f}$ together with the inequality 
$$\frac{(1 + |k|)^s}{\sqrt{\mu}|k|} \leq \frac{2}{\sqrt{\mu}}(1 +|k|)^{s-1} \qquad \mbox{for} \quad|k|\geq 1,$$
one would obtain $\tfrac{1}{\sqrt{\mu}} \|f\|_{\mathcal{A}_\lambda^{s-1,0}} $ instead of $ \|f\|_{\mathcal{A}_\lambda^{s,0}} $ in \eqref{ellip_estimate}. However, since we will use this elliptic estimate to prove a quantitative convergence result in Section \eqref{conv_sec}, the optimal order of convergence will be obtained  considering the estimate without the singularity $1/\sqrt{\mu}$. Therefore, the price one has to pay for the optimal order is that the source term of the corresponding elliptic system needs to be more regular.
\end{remark}
 In order to apply Proposition \ref{ellipestimate} to the elliptic problem \eqref{laplace_phimu2}, using the fact that $P(\Sigma)=\mathrm{Id} + Q(\Sigma)$, we write it as the following Poisson problem 
 	\begin{equation}\label{poisson_phimu}
 	\begin{cases}
 	\Delta^\mu  \widetilde{\phi}^\mu=-\nabla^\mu \cdot Q(\Sigma)\nabla^\mu\widetilde{\phi}^\mu +f&\mbox{in}\quad \mathcal{S},\\[5pt]
 	\widetilde{\phi}^\mu=h & \mbox{on}\quad \mathbb{T}\times\{0\},\\[5pt]
 	\frac{1}{1+\eps\zeta}\partial_z  \widetilde{\phi}^\mu=0& \mbox{on}\quad \mathbb{T}\times\{-1\},\\
 	\end{cases}
 	\end{equation}
 	Following the approach in \cite{GG-BS2020}, we aim to derive an estimate on the $\mathcal{A}^{s,0}_\lambda$-norm of $\nabla^\mu \widetilde{\phi}^\mu$ only in terms of Wiener norms of the free surface $\zeta$. To do that, we need to introduce the next two proposition that give regularity estimates in terms of $\zeta$ of the matrix $Q(\Sigma)$ and the function $F_{\eps\sqrt{\mu}}(\partial_x\zeta)$ appearing respectively in the divergence term and in the Dirichlet boundary condition of \eqref{poisson_phimu}.

 \begin{proposition}\label{prop_Q}
 	Let $Q(\Sigma)$ be as in \eqref{defQ}. Assume that $\mu < 1 $ and  $|\zeta|_{1,\lambda}\leq \frac{1}{8K_s\eps}$ with $s,\lambda\geq 0$. Then, choosing $\sigma(x,z)=\eps\zeta(x)(z+1)$,  we have 
 	\begin{equation}\label{estimateQ}
 	\|Q(\Sigma)\|_{\mathcal{A}_\lambda^{s,1}}\leq 3\eps \sqrt{\mu}|\zeta|_{s+1,\lambda}.
 	\end{equation}
 \end{proposition}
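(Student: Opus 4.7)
My plan is to compute each entry of $Q(\Sigma)$ from the definition \eqref{defQ} using the explicit choice $\sigma(x,z)=\eps\zeta(x)(z+1)$, and bound the contribution of each entry to the $\mathcal{A}^{s,1}_\lambda$ semi-norm separately. With this $\sigma$ we have $\partial_z\sigma=\eps\zeta(x)$ (independent of $z$) and $\partial_x\sigma=\eps(z+1)\partial_x\zeta(x)$, so
\[Q_{11}=\eps\zeta,\quad Q_{12}=Q_{21}=-\sqrt{\mu}\,\eps(z+1)\partial_x\zeta,\quad Q_{22}=-1+\frac{1+\mu\eps^2(z+1)^2(\partial_x\zeta)^2}{1+\eps\zeta},\]
yielding $\partial_z Q_{11}=0$, $\partial_z Q_{12}=-\sqrt{\mu}\,\eps\,\partial_x\zeta$, and $\partial_z Q_{22}=\frac{2\mu\eps^2(z+1)(\partial_x\zeta)^2}{1+\eps\zeta}$.

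The two off-diagonal entries are handled immediately: since $\partial_z Q_{12}$ is independent of $z$, a direct Fourier calculation together with $|n|(1+|n|)^s\leq(1+|n|)^{s+1}$ gives $\|Q_{12}\|_{\mathcal{A}^{s,1}_\lambda}\leq\sqrt{\mu}\eps|\zeta|_{s+1,\lambda}$, so the two together contribute at most $2\sqrt{\mu}\eps|\zeta|_{s+1,\lambda}$. For the $(2,2)$ entry, integrating the factor $z+1$ over $[-1,0]$ (which gives $\tfrac12$) reduces the problem to showing $\mu\eps^2|F|_{s,\lambda}\leq\sqrt{\mu}\eps|\zeta|_{s+1,\lambda}$, where $F:=(\partial_x\zeta)^2/(1+\eps\zeta)$.

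The main obstacle is precisely this last estimate, because the naive expansion of $(1+\eps\zeta)^{-1}$ as a geometric series combined with Propositions \ref{productWie} and \ref{powerWie} generates a $K_s^2$ constant (one $K_s$ from the product with $(1+\eps\zeta)^{-1}$ and one from $(\partial_x\zeta)^2$), whereas the hypothesis $\eps|\zeta|_{1,\lambda}\leq 1/(8K_s)$ only offers a single factor $1/K_s$. My workaround is to exploit the algebraic identity $F=(\partial_x\zeta)^2-\eps\zeta F$: applying Proposition \ref{productWie} to $\eps\zeta F$ and transferring the term $\eps K_s|\zeta|_{0,\lambda}|F|_{s,\lambda}$ to the left-hand side is legitimate since the hypothesis gives $\eps K_s|\zeta|_{0,\lambda}\leq 1/8$, and produces
\[|F|_{s,\lambda}\leq \tfrac{8}{7}\bigl(|(\partial_x\zeta)^2|_{s,\lambda}+\eps K_s|\zeta|_{s,\lambda}|F|_{0,\lambda}\bigr).\]
A separate direct argument at $s=0$ (Young's convolution inequality carries no $K$-constant) gives $|F|_{0,\lambda}\leq\tfrac{4}{3}|\zeta|_{1,\lambda}^2$. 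Combining these with $|(\partial_x\zeta)^2|_{s,\lambda}\leq 2K_s|\zeta|_{1,\lambda}|\zeta|_{s+1,\lambda}$ (from Proposition \ref{powerWie}), the embedding $|\zeta|_{s,\lambda}\leq|\zeta|_{s+1,\lambda}$, and the hypothesis once more to cancel the remaining $K_s$ against $\eps|\zeta|_{1,\lambda}\leq 1/(8K_s)$ yields $\mu\eps^2|F|_{s,\lambda}\leq\tfrac{\mu\eps}{3}|\zeta|_{s+1,\lambda}$. Since $\mu<1$ implies $\mu\leq\sqrt{\mu}$, summing with the off-diagonal contribution delivers the claimed bound with universal constant $\tfrac{7}{3}<3$.
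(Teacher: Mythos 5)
Your computation of the entries and of their $z$-derivatives is correct, your observation that the $z$-independent parts ($Q_{11}$ and the $-\eps\zeta/(1+\eps\zeta)$ piece of $Q_{22}$) drop out of the $\mathcal{A}^{s,1}_\lambda$ norm matches how the paper itself uses that norm, and your arithmetic does close: $2\sqrt{\mu}\eps|\zeta|_{s+1,\lambda}$ from the off-diagonal entries plus $\tfrac{8}{7}\bigl(2K_s\eps|\zeta|_{1,\lambda}+\tfrac{1}{6}\eps|\zeta|_{1,\lambda}\bigr)\mu\eps|\zeta|_{s+1,\lambda}\leq\tfrac{\mu\eps}{3}|\zeta|_{s+1,\lambda}$ under $K_s\eps|\zeta|_{1,\lambda}\leq\tfrac18$, giving $\tfrac{7}{3}\eps\sqrt{\mu}|\zeta|_{s+1,\lambda}$. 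Where you genuinely diverge from the paper is the treatment of the factor $(1+\eps\zeta)^{-1}$ in $Q_{22}$: the paper expands it as a Neumann series, writes $Q(\Sigma)=\eps\sum_{n\geq0}\eps^nQ_{(n)}(\zeta)$, estimates each $Q_{(n)}$ with Propositions \ref{productWie}--\ref{powerWie} (accepting the $K_sK_{s,n}$-type constants) and sums the series using $4K_s\eps|\zeta|_{1,\lambda}\leq\tfrac12$, ending with $\tfrac{11}{4}\eps\sqrt{\mu}|\zeta|_{s+1,\lambda}$; you instead bound $F=(\partial_x\zeta)^2/(1+\eps\zeta)$ implicitly via the identity $F=(\partial_x\zeta)^2-\eps\zeta F$ and an absorption argument, supplemented by a constant-free $s=0$ estimate. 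Your route is shorter, dodges the accumulation of $K_s$-powers, and yields a marginally better constant; the paper's route is more mechanical but has the advantage that convergence of the series \emph{is} the proof that the quantity being estimated is finite. That is the one caveat in your argument: moving $\eps K_s|\zeta|_{0,\lambda}|F|_{s,\lambda}$ to the left-hand side is only legitimate once you know $|F|_{s,\lambda}<\infty$, which does not follow from the identity alone. It is easily supplied — under $\eps K_s|\zeta|_{0,\lambda}\leq\tfrac18$ the series $\sum_{n\geq1}(-\eps\zeta)^n$ converges absolutely in $\mathbb{A}^s_\lambda$ by Proposition \ref{powerWie} (this is exactly the paper's expansion), or one can run your absorption on truncations — but you should state it; as written the step is tacitly assumed rather than justified.
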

 \begin{proof}
 	With the particular choice $ \sigma(x,z)=\eps\zeta(x)(z+1)$, $Q(\Sigma)$ reads 
 	$$Q(\Sigma)=\begin{pmatrix}
 	\eps\zeta& -\eps\sqrt{\mu} \partial_x\zeta (z+1)\\[10pt]
 	-\eps\sqrt{\mu} \partial_x\zeta (z+1)&\dfrac{-\eps\zeta + \eps^2\mu(\partial_x \zeta)^2(z+1)^2}{1+\eps\zeta}
 	\end{pmatrix}.$$
 	Following \cite{GG-BS2020}, we use the expansion of $\frac{1}{1+x}$ for $|x|<1$ to write $Q(\Sigma)$ as the infinite sum 
 	$$Q(\Sigma)=\eps \sum\limits_{n\geq 0}\eps^n Q_{(n)}(\zeta)$$
 	with 
 	$$Q_{(0)}(\zeta)=\begin{pmatrix}
 	\zeta& -\sqrt{\mu} \partial_x\zeta (z+1)\\[10pt]
 	-\eps\sqrt{\mu} \partial_x\zeta (z+1)&-\zeta + \eps\mu(\partial_x \zeta)^2(z+1)^2
 	\end{pmatrix}$$and $$ Q_{(n)}(\zeta)=\begin{pmatrix}
 	\quad0& 0\\[10pt]
 	\quad 0&(-\zeta)^n\left(-\zeta + \eps\mu(\partial_x \zeta)^2(z+1)^2\right)
 	\end{pmatrix}.$$
 	We have 
 	\begin{equation}
 	\begin{aligned}
 	\|Q_{(0)}(\zeta)\|_{A^{s,1}_\lambda} &\leq 2\sqrt{\mu}\|\partial_x \zeta (z+1)\|_{A^{s,1}_\lambda} + \eps\mu \|(\partial_x\zeta)^2(z+1)^2\|_{A^{s,1}_\lambda} \\
 	& \leq 2\sqrt{\mu}|\zeta|_{s+1,\lambda} + 2\eps\mu|(\partial_x\zeta)^2|_{s,\lambda}\\
 	&\leq 2\sqrt{\mu}|\zeta|_{s+1,\lambda} + 2\eps\mu K_{s,2}|\zeta|_{1,\lambda}|\zeta|_{s+1,\lambda}\\& = 2\left(\sqrt{\mu}
 	+ \eps\mu K_{s}|\zeta|_{1,\lambda}\right)|\zeta|_{s+1,\lambda}
 	\end{aligned}
 	\end{equation}
 	and 
 	\begin{equation}
 	\begin{aligned}
 	\|Q_{(n)}(\zeta)\|_{A^{s,1}_\lambda} &\leq \eps \mu\|\zeta^n(\partial_x \zeta)^2 (z+1)^2\|_{A^{s,1}_\lambda} \leq 2 \eps \mu |\zeta^n (\partial_x \zeta)^2|_{s,\lambda}\\
 	&\leq 2\eps \mu K_s \left(|\zeta^n|_{0,\lambda}|(\partial_x\zeta)^2|_{s,\lambda} + |\zeta^n|_{s,\lambda}|(\partial_x \zeta)^2|_{0,\lambda}\right)\\
 	& \leq 2\eps \mu K_s\bigg(K_{0,n}|\zeta|^n_{0,\lambda} K_{s,2}|\partial_x \zeta|_{0,\lambda}|\partial_x\zeta|_{s,\lambda} \\ &+ K_{s,n}|\zeta|^{n-1}_{0,\lambda}|\zeta|_{s,\lambda}K_{0,2}|\partial_x\zeta|^2_{0,\lambda}\bigg)\\
 	&\leq 2\eps \mu K_s (2nK_s + 2K_{s,n}) |\zeta|^{n+1}_{1,\lambda}|\zeta|_{s+1,\lambda}.
 	\end{aligned}
 	\end{equation}
 	From the definition of $K_{s,n}$ one has 
 	$2K_s+K_{s,n}\leq 2 K_{s,n}$ and using the fact that $n\leq 2^n$, $2\leq 2^n$ for any $n\geq 1 $ we obtain 
 	\begin{equation}
 	\begin{aligned}
 	\|Q_{(n)}(\zeta)\|_{A^{s,1}_\lambda} &\leq2^{n+1}K_{s,n} |\zeta|^{n}_{1,\lambda}      2\eps \mu  K_s |\zeta|_{1,\lambda}|\zeta|_{s+1,\lambda}.
 	\end{aligned}
 	\end{equation}
 	Since we have 
 	\begin{equation}
 	\begin{aligned}
 	2^{n+1}K_{s,n}|\zeta|^n_{1,\lambda}= 2(2K_{s,n}^{1/n}|\zeta|_{1,\lambda})^n \leq 2 (4K_s |\zeta|_{1,\lambda})^n ,
 	\end{aligned}
 	\end{equation}if we assume that  $|\zeta|_{1,\lambda}\leq \frac{1}{8K_s\eps}$ we get 
 	$$	\|Q_{(n)}(\zeta)\|_{A^{s,1}_\lambda} \leq \frac{2}{2^n \eps^n}2\eps \mu K_s|\zeta|_{1,\lambda}  |\zeta|_{s+1,\lambda}.$$ Hence the series $\eps \sum\limits_{n\geq 0} \eps^n Q_{(n)}(\zeta)$ is summable and 
 	\begin{equation}
 	\begin{aligned}
 	\|Q(\Sigma)\|_{A^{s,1}_\lambda}&\leq  	\eps\|Q_{(0)}(\zeta)\|_{A^{s,1}_\lambda} + \eps \sum\limits_{n\geq 1}	\eps^{n}\|Q_{(n)}(\zeta)\|_{A^{s,1}_\lambda} \\&
 	\leq 2\eps\left(\sqrt{\mu}
 	+ \eps\mu K_{s}|\zeta|_{1,\lambda}\right)|\zeta|_{s+1,\lambda} + 2\eps  \sum\limits_{n\geq 1} \frac{1}{2^n}2\eps \mu K_s|\zeta|_{1,\lambda}  |\zeta|_{s+1,\lambda}\\
 	&\leq 2\eps [\sqrt{\mu} + \eps\mu K_s|\zeta|_{1,\lambda}   (1+2 \sum\limits_{n\geq 1}2^{-n} ) ]|\zeta|_{s+1,\lambda}\\&
 	\leq 2 \eps [\sqrt{\mu} +3 \eps\mu K_s|\zeta|_{1,\lambda}   ]|\zeta|_{s+1,\lambda}.
 	\end{aligned}
 	\end{equation}
 	Assuming $|\zeta|_{1,\lambda}\leq \frac{1}{8K_s\eps}$, one has $$3 \eps\mu K_s|\zeta|_{1,\lambda}\leq \frac{3}{8}\mu $$ and we have 
 	\begin{equation*}
 	\|Q(\Sigma)\|_{A^{s,1}_\lambda}\leq 2\eps (\sqrt{\mu} +\frac{3}{8}\mu )|\zeta|_{s+1,\lambda} \leq \frac{11}{4}\eps \sqrt{\mu} |\zeta|_{s+1,\lambda}\leq 3\eps \sqrt{\mu} |\zeta|_{s+1,\lambda}.
 	\end{equation*}
 	where we have used in the second inequality the fact that $\mu < 1.$
 \end{proof}
 
 The control of the Wiener norm of $F_{\eps\sqrt{\mu}}(\partial_x\zeta)$ in terms of the Wiener norm of $\zeta$ can be derived by exploiting the analiticity of $F_{\eps\sqrt{\mu}}$ and its explicit expression.

\begin{proposition}\label{prop_Fepsmu} 
 	Let $F_{\eps\sqrt{\mu}}(\cdot)$ as in \eqref{defcurv_dimless} and $\mu < 1$. If $\zeta\in\mathbb{A}^{s+1}_\lambda$ with $s,\lambda\geq 0$ such that $36K^2_s \eps|\zeta|_{1,\lambda}\leq 1$, then 
 	\begin{equation}\label{est_Fepsmu}
 	|F_{\eps\sqrt{\mu}}(\partial_x \zeta)|_{s,\lambda}\leq  \eps\mu |\zeta|_{s+1,\lambda}.
 	\end{equation}
 \end{proposition}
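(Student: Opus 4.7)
The plan is to reduce the statement to Lemma \ref{estimate_G} by recognizing that $F$ is essentially a composition of $G$ with a square. Indeed, since $F(x) = \frac{1}{(1+x^2)^{3/2}} - 1 = G(x^2)$, one has
\begin{equation*}
F_{\eps\sqrt{\mu}}(\partial_x \zeta) = F(\eps\sqrt{\mu}\,\partial_x\zeta) = G\bigl(\eps^2\mu(\partial_x\zeta)^2\bigr).
\end{equation*}
So I would set $v := \eps^2\mu(\partial_x\zeta)^2$ and try to invoke Lemma \ref{estimate_G} on $G(v)$.

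The first step is to verify the smallness hypothesis of Lemma \ref{estimate_G}, namely $4K_s|v|_{0,\lambda}<1$. Using $|(\partial_x\zeta)^2|_{0,\lambda}\leq |\partial_x\zeta|_{0,\lambda}^2 \leq |\zeta|_{1,\lambda}^2$, the assumption $36K_s^2\eps|\zeta|_{1,\lambda}\leq 1$ together with $\mu<1$ easily yields $4K_s\eps^2\mu|\zeta|_{1,\lambda}^2 \leq \frac{\mu}{9K_s}|\zeta|_{1,\lambda}\eps < 1$, as required.

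Next I would apply Lemma \ref{estimate_G} to get $|G(v)|_{s,\lambda}\leq 18K_s|v|_{s,\lambda}$, and then bound $|v|_{s,\lambda} = \eps^2\mu|(\partial_x\zeta)^2|_{s,\lambda}$ via the product estimate of Proposition \ref{productWie}:
\begin{equation*}
|(\partial_x\zeta)^2|_{s,\lambda} \leq 2K_s|\partial_x\zeta|_{0,\lambda}|\partial_x\zeta|_{s,\lambda} \leq 2K_s|\zeta|_{1,\lambda}|\zeta|_{s+1,\lambda}.
\end{equation*}
Combining, $|F_{\eps\sqrt{\mu}}(\partial_x\zeta)|_{s,\lambda}\leq 36K_s^2\,\eps^2\mu\,|\zeta|_{1,\lambda}|\zeta|_{s+1,\lambda}$. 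Finally, the hypothesis $36K_s^2\eps|\zeta|_{1,\lambda}\leq 1$ absorbs the prefactor and one power of $\eps|\zeta|_{1,\lambda}$, leaving precisely the claimed $\eps\mu|\zeta|_{s+1,\lambda}$.

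There is no real obstacle here; the proof is essentially a bookkeeping exercise once one recognizes the $F = G \circ (\cdot)^2$ structure. The only point requiring minor care is keeping track of the various constants $K_s$, $K_{s,n}$ and verifying that the smallness threshold imposed on $\eps|\zeta|_{1,\lambda}$ is strong enough to both satisfy the hypothesis of Lemma \ref{estimate_G} and produce a clean constant $1$ in front of $\eps\mu|\zeta|_{s+1,\lambda}$.
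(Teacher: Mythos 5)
Your proof is correct and follows essentially the same route as the paper: rewrite $F_{\eps\sqrt{\mu}}(\partial_x\zeta)=G(\eps^2\mu(\partial_x\zeta)^2)$, apply Lemma \ref{estimate_G}, bound $|(\partial_x\zeta)^2|_{s,\lambda}$ by the product/power estimate to get the factor $36K_s^2\eps^2\mu|\zeta|_{1,\lambda}|\zeta|_{s+1,\lambda}$, and absorb it with the smallness hypothesis. The only difference is that you explicitly verify the hypothesis $4K_s|v|_{0,\lambda}<1$ of Lemma \ref{estimate_G}, which the paper leaves implicit — a welcome addition, not a deviation.
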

 \begin{proof}
 	From the definition of $F_{\eps\sqrt{\mu}}(\partial_x\zeta)$ it yields
 	$$F_{\eps\sqrt{\mu}}(\partial_x\zeta)= F(\eps\sqrt{\mu}\partial_x\zeta)= G(\eps^2\mu (\partial_x\zeta)^2)$$ with $G(\cdot)$ as in Lemma \ref{estimate_G}. Therefore,  we get
 	\begin{equation*}
 	\begin{aligned}
 	|F_{\eps\sqrt{\mu}}(\partial_x \zeta)|_{s,\lambda}&\leq 18 K_s\eps^2\mu|(\partial_x \zeta)^2|_{s,\lambda}
 	\end{aligned}
 	\end{equation*}
 	From Proposition \ref{powerWie} we have 
 	\begin{equation*}
 	\begin{aligned}
 	|F_{\eps\sqrt{\mu}}(\partial_x \zeta)|_{s,\lambda}&\leq 18 K_s \eps^2\mu K_{s,2}|\partial_x \zeta|_{0,\lambda} |\partial_x \zeta|_{s,\lambda} 
 	\leq  36 \eps^2\mu K^2_s |\zeta|_{1,\lambda}|\zeta|_{s+1,\lambda}
 	\end{aligned}
 	\end{equation*}and \eqref{est_Fepsmu} follows using the smallness assumption on $|\zeta|_{1,\lambda}$.
 \end{proof}

 We are now able to derive the regularity estimate on $\nabla^\mu \widetilde{\phi}^\mu$ in $\mathcal{A}^{s,0}_\lambda$ for both elliptic problems \eqref{laplace_phimu} and \eqref{laplace_phimu_ref}.
 
 \begin{proposition}\label{ell_est}
 	Let $\widetilde{\phi}^\mu$ be the solution to the elliptic problem \eqref{laplace_phimu} with $\mu < 1$ and $\sigma(x,z)=\eps\zeta(x)(z+1)$.  Then, for $\lambda\geq 0$, $s>0$ and provided 
 \begin{equation}\label{assump-zeta1}
 		\max(6K_s, C) 6K_s\eps |\zeta|_{1,\lambda} \leq 1
 		\end{equation}
 	with $C$ as in Proposition \ref{ellipestimate}, the following estimate holds:
 	\begin{equation}\label{ellipticestimate_s}
 	\begin{aligned}
 	\|\nabla^\mu \widetilde{\phi}^\mu\|_{\mathcal{A}_\lambda^{s,0}} \leq 8C\eps\mu \left(|\zeta|_{s+2,\lambda} +\tfrac{1}{\rm Bo} |\zeta|_{s+4,\lambda}\right).
 	\end{aligned}
 	\end{equation}
 	Moreover, if $6C\eps |\zeta|_{1,\lambda}\leq 1$ one has
 	\begin{equation}\label{ellipticestimate_0}
 	\begin{aligned}
 	\|\nabla^\mu \widetilde{\phi}^\mu\|_{\mathcal{A}_\lambda^{0,0}} \leq 4C \eps\mu  \left(|\zeta|_{2,\lambda} +\tfrac{1}{\rm Bo} |\zeta|_{4,\lambda}\right).
 	\end{aligned}
 	\end{equation}
 \end{proposition}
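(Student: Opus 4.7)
The plan is to recast \eqref{laplace_phimu} as a Poisson problem fitting the template of Proposition \ref{ellipestimate}, and then to control each of the three source terms appearing on the right-hand side of \eqref{ellip_estimate} by the auxiliary results \ref{prop_Q} and \ref{prop_Fepsmu} just proved. Writing $P(\Sigma) = \mathrm{Id} + Q(\Sigma)$ and recalling that $\phi^{1/2} \equiv 0$ and $A_\zeta(\partial_x,\partial_z)\phi^0 = \eps(1+\eps\zeta)(\partial_{xx}\zeta + \tfrac{1}{\rm Bo}\partial_{xxxx}\zeta)$ by \eqref{Asigma}, the remainder potential solves
\begin{equation*}
\Delta^\mu \widetilde{\phi}^\mu = \nabla^\mu \cdot \bigl(-Q(\Sigma)\nabla^\mu \widetilde{\phi}^\mu\bigr) + f, \qquad \widetilde{\phi}^\mu|_{z=0} = h, \qquad \partial_z \widetilde{\phi}^\mu|_{z=-1} = 0,
\end{equation*}
with $f = -\mu\eps(1+\eps\zeta)(\partial_{xx}\zeta + \tfrac{1}{\rm Bo}\partial_{xxxx}\zeta)$ and $h = \tfrac{\eps}{\rm Bo} F_{\eps\sqrt{\mu}}(\partial_x\zeta)\,\partial_{xx}\zeta$. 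Proposition \ref{ellipestimate} then yields
\begin{equation*}
\|\nabla^\mu \widetilde{\phi}^\mu\|_{\mathcal{A}^{s,0}_\lambda} \leq C\bigl(\|Q(\Sigma)\nabla^\mu\widetilde{\phi}^\mu\|_{\mathcal{A}^{s,0}_\lambda} + \|f\|_{\mathcal{A}^{s,0}_\lambda} + |h|_{s,\lambda}\bigr),
\end{equation*}
and it remains to estimate each of the three terms.

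I would first settle \eqref{ellipticestimate_0}, since the higher-regularity bound will feed on it. At $s=0$, the product estimate of Proposition \ref{productWieSob} (with $K_0=1$) collapses to $\|Q(\Sigma)\nabla^\mu\widetilde{\phi}^\mu\|_{\mathcal{A}^{0,0}_\lambda} \leq \|Q(\Sigma)\|_{\mathcal{A}^{0,1}_\lambda}\|\nabla^\mu\widetilde{\phi}^\mu\|_{\mathcal{A}^{0,0}_\lambda}$, and Proposition \ref{prop_Q} bounds $\|Q(\Sigma)\|_{\mathcal{A}^{0,1}_\lambda}$ by $3\eps\sqrt{\mu}|\zeta|_{1,\lambda} \leq 3\eps|\zeta|_{1,\lambda}$; thanks to $6C\eps|\zeta|_{1,\lambda}\leq 1$ this self-coupling term is absorbed into the left-hand side at the cost of a factor $2$. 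For the bulk source, $f$ does not depend on $z$, so $\|f\|_{\mathcal{A}^{0,0}_\lambda}=|f|_{0,\lambda}$, and the algebra property of $\mathbb{A}^0_\lambda$ gives $|f|_{0,\lambda} \leq \mu\eps(1+\eps|\zeta|_{0,\lambda})(|\zeta|_{2,\lambda}+\tfrac{1}{\rm Bo}|\zeta|_{4,\lambda})$. For the boundary trace, $|h|_{0,\lambda}\leq \tfrac{\eps}{\rm Bo}|F_{\eps\sqrt{\mu}}(\partial_x\zeta)|_{0,\lambda}|\partial_{xx}\zeta|_{0,\lambda}$, and Proposition \ref{prop_Fepsmu} bounds the first factor by $\eps\mu|\zeta|_{1,\lambda}$, so $h$ contributes at most $\tfrac{\eps^2\mu}{\rm Bo}|\zeta|_{1,\lambda}|\zeta|_{2,\lambda}$. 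Under the smallness of $\eps|\zeta|_{1,\lambda}$, the trace and the $\eps\zeta$ piece of $f$ are absorbed into the principal term $\mu\eps(|\zeta|_{2,\lambda}+\tfrac{1}{\rm Bo}|\zeta|_{4,\lambda})$, yielding \eqref{ellipticestimate_0} with the clean constant $4C\eps\mu$.

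For $s>0$ the full Proposition \ref{productWieSob} gives
\begin{equation*}
\|Q(\Sigma)\nabla^\mu\widetilde{\phi}^\mu\|_{\mathcal{A}^{s,0}_\lambda} \leq K_s\bigl(\|\nabla^\mu\widetilde{\phi}^\mu\|_{\mathcal{A}^{s,0}_\lambda}\|Q(\Sigma)\|_{\mathcal{A}^{0,1}_\lambda} + \|\nabla^\mu\widetilde{\phi}^\mu\|_{\mathcal{A}^{0,0}_\lambda}\|Q(\Sigma)\|_{\mathcal{A}^{s,1}_\lambda}\bigr).
\end{equation*}
The first term is again self-referential and, by Proposition \ref{prop_Q}, is bounded by $3K_s\eps|\zeta|_{1,\lambda}\|\nabla^\mu\widetilde{\phi}^\mu\|_{\mathcal{A}^{s,0}_\lambda}$; the assumption $\max(6K_s,C)\cdot 6K_s\eps|\zeta|_{1,\lambda}\leq 1$ is precisely what is needed so that $3CK_s\eps|\zeta|_{1,\lambda}\leq \tfrac{1}{2}$ and this term can again be absorbed. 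The second piece is a genuine lower-order contribution: $\|Q(\Sigma)\|_{\mathcal{A}^{s,1}_\lambda}\leq 3\eps\sqrt{\mu}|\zeta|_{s+1,\lambda}$ together with the already proved \eqref{ellipticestimate_0} give a bound of order $\eps^2\mu^{3/2}|\zeta|_{s+1,\lambda}(|\zeta|_{2,\lambda}+\tfrac{1}{\rm Bo}|\zeta|_{4,\lambda})$, which the smallness reduces to a harmless multiple of $\eps\mu(|\zeta|_{s+2,\lambda}+\tfrac{1}{\rm Bo}|\zeta|_{s+4,\lambda})$. The source $f$ and trace $h$ are treated at level $s$ exactly as before via Proposition \ref{productWie} and Proposition \ref{prop_Fepsmu}, and the overall constant $8C$ in \eqref{ellipticestimate_s} is just the sum of the four contributions after absorption.

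The main technical delicacy is the bookkeeping of constants in the absorption step: the combination of $C$ from the elliptic estimate, the factor $3$ from Proposition \ref{prop_Q}, and the algebra constant $K_s$ from Proposition \ref{productWieSob} must leave strictly more than half of $\|\nabla^\mu\widetilde{\phi}^\mu\|_{\mathcal{A}^{s,0}_\lambda}$ on the left-hand side, and simultaneously the hypotheses of Propositions \ref{prop_Q} and \ref{prop_Fepsmu} (namely $8K_s\eps|\zeta|_{1,\lambda}\leq 1$ and $36K_s^2\eps|\zeta|_{1,\lambda}\leq 1$) must be satisfied. The precise form $\max(6K_s,C)\cdot 6K_s\eps|\zeta|_{1,\lambda}\leq 1$ is tuned to encode all these requirements at once; everything else is a mechanical application of the previously established product, power, and composition estimates.
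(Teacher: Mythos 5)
Your proposal is correct and follows essentially the same route as the paper: recast \eqref{laplace_phimu} as the Poisson problem \eqref{poisson_phimu}, apply Proposition \ref{ellipestimate}, bound the three source contributions via Propositions \ref{prop_Q}, \ref{prop_Fepsmu}, \ref{productWie}, \ref{productWieSob} (plus interpolation to trade $|\zeta|_{s+1}|\zeta|_{2}$-type products for $|\zeta|_{1}|\zeta|_{s+2}$), absorb the self-referential $Q(\Sigma)\nabla^\mu\widetilde{\phi}^\mu$ term using \eqref{assump-zeta1}, and establish the $s=0$ bound first so it can be fed into the $s>0$ case. The only blemish is cosmetic: the paper's convention gives $K_0=\tfrac12$ rather than $1$, which if anything makes your $s=0$ absorption easier and does not affect the argument.
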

 
 \begin{proof}	
 	Applying Proposition \ref{ellipestimate} to the Poisson problem \eqref{poisson_phimu} with 
 	$$f= -\mu A_\zeta(\partial_x,\partial_z)\phi^0, \qquad h=\tfrac{\eps}{\rm Bo}F_{\eps\sqrt{\mu}}(\partial_x \zeta)\partial_{xx}\zeta$$ as in \eqref{laplace_phimu} (recall that $\phi^{1/2}\equiv 0$), we obtain the following estimate 
 	\begin{equation}
 	\begin{aligned}
 	\|\nabla^\mu \widetilde{\phi}^\mu\|_{\mathcal{A}_\lambda^{s,0}}
 	\leq C\big( & \|Q( \Sigma)\nabla^\mu \widetilde{\phi}^\mu\|_{\mathcal{A}^{s,0}_\lambda}\\& + \mu \|A_\zeta(\partial_x,\partial_z)\phi^0\|_{\mathcal{A}^{s,0}_\lambda}   + \tfrac{\eps}{\rm Bo} | F_{\eps\sqrt{\mu}}(\partial_x \zeta)\partial_{xx}\zeta|_{s,\lambda}  \big)
 	\end{aligned}
 	\end{equation}
 	From \eqref{Asigma} we have that
 	$$\|A_\zeta(\partial_x,\partial_z)\phi^0 \|_{\mathcal{A}^{s,0}_\lambda}=\eps|(1+\eps\zeta)( \partial_{xx}\zeta +\tfrac{1}{\rm Bo}\partial_{xxxx}\zeta) |_{s,\lambda}.$$
 	Using Proposition \ref{productWie} and Proposition \ref{productWieSob} we get
 	\begin{equation}\label{est_intermediate}
 	\begin{aligned}
 	&\|\nabla^\mu \widetilde{\phi}^\mu\|_{\mathcal{A}_\lambda^{s,0}}
 	\leq C\bigg(  K_s\|Q( \Sigma)\|_{\mathcal{A}^{0,1}}\|\nabla^\mu \widetilde{\phi}^\mu\|_{\mathcal{A}^{s,0}_\lambda} + K_s\|Q( \Sigma)\|_{\mathcal{A}^{s,1}}\|\nabla^\mu \widetilde{\phi}^\mu\|_{\mathcal{A}^{0,0}_\lambda}\\&+ \eps\mu | \partial_{xx}\zeta +\tfrac{1}{\rm Bo}\partial_{xxxx}\zeta |_{s,\lambda} +  \eps\mu | \eps\zeta(\partial_{xx}\zeta +\tfrac{1}{\rm Bo}\partial_{xxxx}\zeta) |_{s,\lambda}  \\& + K_s\tfrac{\eps}{\rm Bo} \big(| F_{\eps\sqrt{\mu}}(\partial_x \zeta)|_{s,\lambda} |\partial_{xx}\zeta|_{0 ,\lambda} +  | F_{\eps\sqrt{\mu}}(\partial_x \zeta)|_{0,\lambda} |\partial_{xx}\zeta|_{s ,\lambda}\big)  \bigg)
 	\end{aligned}
 	\end{equation}
 	Furthermore we have for $s=0$ and using \eqref{est_Fepsmu}
 	\begin{equation}
 	\begin{aligned}
 	\|\nabla^\mu \widetilde{\phi}^\mu\|_{\mathcal{A}_\lambda^{0,0}}\leq  C & \big( \|Q(\Sigma )\|_{\mathcal{A}_\lambda^{0,1}}\|\nabla^\mu \widetilde{\phi}^\mu \|_{\mathcal{A}_\lambda^{0,0}} + \eps\mu(|\partial_{xx}\zeta|_{0,\lambda} + \eps |\zeta \partial_{xx}\zeta|_{0,\lambda} )\\&+ \tfrac{\eps\mu}{\rm Bo}(|\partial_{xxxx}\zeta|_{0,\lambda} + \eps|\zeta \partial_{xxxx}\zeta|_{0,\lambda}) + \tfrac{\eps^2\mu}{\rm Bo} |\zeta|_{1,\lambda}|\partial_{xx}\zeta|_{0,\lambda}
 	\big).
 	\end{aligned}
 	\end{equation}
 	Using \eqref{estimateQ} we can move the term with $\widetilde{\varphi}^\mu$  to the left-hand side to obtain 
 	\begin{equation}\label{est_phi_wienersob0}
 	\begin{aligned}
 	&(1- 3C\eps\sqrt{\mu} |\zeta|_{1,\lambda})\|\nabla^\mu \widetilde{\phi}^\mu\|_{\mathcal{A}_\lambda^{0,0}} \\&\leq C\left( \eps\mu (1+\eps |\zeta|_{0,\lambda})|\zeta|_{2,\lambda} + \tfrac{\eps\mu}{\rm Bo} (1+\eps|\zeta|_{0,\lambda})|\zeta|_{4,\lambda} +\tfrac{\eps^2\mu}{\rm Bo} |\zeta|_{1,\lambda}|\zeta|_{2,\lambda}
 	\right)
 	\end{aligned}
 	\end{equation}
 	and from \eqref{assump-zeta1} we get 
 	\begin{equation}\label{est_gradphi_wienersob0}
 	\begin{aligned}
 	\|\nabla^\mu \widetilde{\phi}^\mu\|_{\mathcal{A}_\lambda^{0,0}} \leq \frac{C\eps\mu(1+2\eps|\zeta|_{0,\lambda})}{(1- 3C\eps\sqrt{\mu} |\zeta|_{1,\lambda})}  \left(|\zeta|_{2,\lambda}+\tfrac{1}{\rm Bo}|\zeta|_{4,\lambda}\right).
 	\end{aligned}
 	\end{equation}
 	For $s\neq0$, using \eqref{est_gradphi_wienersob0}, Proposition \ref{prop_Q}, Proposition \ref{prop_Fepsmu} and Proposition \ref{productWie}  we get 
 	\begin{equation*}
 	\begin{aligned}
 	(1-3&CK_s \eps \sqrt{\mu}|\zeta|_{1,\lambda})\|\nabla^\mu \widetilde{\phi}^\mu\|_{\mathcal{A}_\lambda^{s,0}} \\[5pt] \leq  C  \bigg[ &3K_s\eps\sqrt{\mu}|\zeta|_{s+1,\lambda} \frac{C\eps\mu (1+2\eps|\zeta|_{0,\lambda})}{(1- 3C\eps\sqrt{\mu} |\zeta|_{1,\lambda})}  \left(|\zeta|_{2,\lambda}+\tfrac{1}{\rm Bo}|\zeta|_{4,\lambda} \right)\\[5pt]&	
 	+	\eps\mu(|\zeta|_{s+2,\lambda} + \tfrac{1}{\rm Bo}|\zeta|_{s+4,\lambda}  )+ \eps^2\mu(|\zeta\partial_{xx}\zeta|_{s,\lambda}+ \tfrac{1}{\rm Bo}|\zeta\partial_{xxxx}\zeta|_{s,\lambda})
 	\\[5pt]& + K_s \tfrac{\eps^2\mu}{\rm Bo}(|\zeta|_{s+1,\lambda}|\zeta|_{2,\lambda} + |\zeta|_{1,\lambda}|\zeta|_{s+2,\lambda}) \bigg]
 	\end{aligned}
 	\end{equation*}
 From Proposition \ref{productWie} and Proposition \ref{interpolation} it follows
 	\begin{equation*}
 	\begin{aligned}
 	&(1-3CK_s \eps \sqrt{\mu}|\zeta|_{1,\lambda})\|\nabla^\mu \widetilde{\phi}^\mu\|_{\mathcal{A}_\lambda^{s,0}} \\[5pt] &\leq  C  \bigg[ 3K_s\eps\sqrt{\mu}|\zeta|_{s+1,\lambda} \tfrac{C\eps\mu (1+2\eps|\zeta|_{0,\lambda})}{(1- 3C\eps\sqrt{\mu} |\zeta|_{1,\lambda})}  \left(|\zeta|_{2,\lambda}+\tfrac{1}{\rm Bo}|\zeta|_{4,\lambda} \right)\\[5pt]&\qquad	
 	+\eps\mu(1+2\eps K_s|\zeta|_{0,\lambda})(|\zeta|_{s+2,\lambda} + \tfrac{1}{\rm Bo}|\zeta|_{s+4,\lambda}) 
 	+ 2K_s \tfrac{\eps^2\mu}{\rm Bo} |\zeta|_{1,\lambda}|\zeta|_{s+2,\lambda} \bigg]\\[5pt]& \leq  
 	C \eps\mu(1+4\eps K_s|\zeta|_{0,\lambda})\left(\tfrac{3K_sC\eps\sqrt{\mu}|\zeta|_{1,\lambda}}{1-3C\eps\sqrt{\mu}|\zeta|_{1,\lambda}} +1\right) \left(|\zeta|_{s+2,\lambda} +\tfrac{1}{\rm Bo} |\zeta|_{s+4,\lambda}\right)
 	\end{aligned}
 	\end{equation*}
 	and  using \eqref{assump-zeta1} 
 	\begin{equation*}
 	\begin{aligned}
 	(1-\tfrac{ \sqrt{\mu}}{2})\|\nabla^\mu \widetilde{\phi}^\mu\|_{\mathcal{A}_\lambda^{s,0}} \leq 2C \eps\mu \left(\tfrac{\tfrac{\sqrt{\mu}}{2}}{1-\tfrac{\sqrt\mu}{2}} +1\right) \left(|\zeta|_{s+1,\lambda} +\tfrac{\sqrt{\mu}}{\rm bo} |\zeta|_{s+3,\lambda}\right).
 	\end{aligned}
 	\end{equation*}
 	For $\mu < 1$, we have $1-\tfrac{\sqrt{\mu}}{2}>\tfrac{1}{2}$ and 
 	\begin{equation*}
 	\begin{aligned}
 	\|\nabla^\mu \widetilde{\phi}^\mu\|_{\mathcal{A}_\lambda^{s,0}} \leq 8C\eps\mu \left(|\zeta|_{s+2,\lambda} +\tfrac{1}{\rm Bo} |\zeta|_{s+4,\lambda}\right).
 	\end{aligned}
 	\end{equation*}
 	Finally, using again \eqref{assump-zeta1} we obtain from \eqref{est_gradphi_wienersob0} 
 	\begin{equation*}
 	\begin{aligned}
 	\|\nabla^\mu \widetilde{\phi}^\mu\|_{\mathcal{A}_\lambda^{0,0}} \leq 4C\eps \mu \left(|\zeta|_{2,\lambda} +\tfrac{1}{\rm Bo} |\zeta|_{4,\lambda}\right).
 	\end{aligned}
 	\end{equation*}
 \end{proof}

 \begin{proposition}\label{ell_est_ref}
 	Let $\widetilde{\phi}^\mu_{\rm ref}$ be the solution to the elliptic problem \eqref{laplace_phimu_ref} with $\mu < 1$ and $\sigma(x,z)=\eps\zeta(x)(z+1)$.  Then, for $\lambda\geq 0$, $s>0$ and provided
 	\begin{equation}\label{assump-zeta1ref}
\max(6K_s,C)	6K_s\eps|\zeta|_{1,\lambda}\leq1
 	\end{equation}
the following estimate holds:
 	\begin{equation}\label{ellipticestimate_s_ref}
 	\begin{aligned}
 	\|\nabla^\mu \widetilde{\phi}^\mu_{\rm ref}\|_{\mathcal{A}_\lambda^{s,0}} \leq  C_s\eps\mu^{3/2}(\tfrac{1}{\rm bo}+1) |\zeta|_{s+4,\lambda}.
 	\end{aligned}
 	\end{equation}for some constant $C_s>0$ independent of $\mu$.
 	Moreover, if $6C\eps|\zeta|_{1,\lambda}\leq 1$ one has
 	\begin{equation}\label{ellipticestimate_0_ref}
 	\begin{aligned}
 	\|\nabla^\mu \widetilde{\phi}^\mu_{\rm ref}\|_{\mathcal{A}_\lambda^{0,0}} \leq C_0\eps\mu^{3/2}(\tfrac{1}{\rm bo}+1) |\zeta|_{4,\lambda}
 	\end{aligned}
 	\end{equation}for some constant $C_0>0$ independent of $\mu$.
 \end{proposition}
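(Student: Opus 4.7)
The plan is to follow the scheme of Proposition \ref{ell_est} essentially verbatim, as \eqref{laplace_phimu_ref} shares the same structure as \eqref{laplace_phimu}, the only differences being (i) two interior source terms instead of one and (ii) a modified Dirichlet datum at the top. Using $P(\Sigma)=\mathrm{Id}+Q(\Sigma)$, I would first recast \eqref{laplace_phimu_ref} as a Poisson problem with source $-\nabla^\mu\cdot Q(\Sigma)\nabla^\mu\widetilde{\phi}^\mu_{\rm ref}-\mu^{3/2}A_\zeta(\partial_x,\partial_z)\phi^{1/2}-\mu^2 A_\zeta(\partial_x,\partial_z)\phi^1$ and boundary datum $h=\tfrac{\eps\sqrt{\mu}}{\mathrm{bo}}F_{\eps\sqrt{\mu}}(\partial_x\zeta)\partial_{xx}\zeta$, and then apply Proposition \ref{ellipestimate} to reduce the question to bounding each source piece in $\mathcal{A}^{s,0}_\lambda$, the boundary datum in $\mathbb{A}^s_\lambda$, and absorbing the $Q(\Sigma)\nabla^\mu\widetilde{\phi}^\mu_{\rm ref}$ contribution into the left-hand side.

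Next, I would estimate each piece separately. Since $\phi^{1/2}=\tfrac{\eps}{\mathrm{bo}}\partial_{xx}\zeta$ is $z$-independent, $A_\zeta\phi^{1/2}=\tfrac{\eps}{\mathrm{bo}}(1+\eps\zeta)\partial_{xxxx}\zeta$, and Proposition \ref{productWie} gives a control by $\tfrac{\eps}{\mathrm{bo}}(1+2K_s\eps|\zeta|_{0,\lambda})|\zeta|_{s+4,\lambda}$; multiplied by $\mu^{3/2}$ this is the leading contribution. For $\phi^1(x,z)=-\eps(\tfrac{z^2}{2}+z)(1+\eps\zeta)^2\partial_{xx}\zeta$, applying $A_\zeta$ produces a finite sum of products of polynomials in $z$, powers $(1+\eps\zeta)^k$ and derivatives $\partial^\alpha\zeta$ with $|\alpha|\leq 4$; using Proposition \ref{productWieSob}, Proposition \ref{productWie}, Proposition \ref{powerWie} and Proposition \ref{interpolation} under \eqref{assump-zeta1ref}, each such term is bounded by $\eps|\zeta|_{s+4,\lambda}$ up to absolute constants, and the prefactor $\mu^2\leq\mu^{3/2}$ for $\mu<1$ keeps it below the target. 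For the boundary datum, Proposition \ref{prop_Fepsmu} gives $|F_{\eps\sqrt{\mu}}(\partial_x\zeta)|_{s,\lambda}\leq\eps\mu|\zeta|_{s+1,\lambda}$, which combined with $\partial_{xx}\zeta$ via Proposition \ref{productWie} and interpolated through Proposition \ref{interpolation} yields a term of order $\tfrac{\eps^3\mu^{3/2}}{\mathrm{bo}}|\zeta|_{s+3,\lambda}$, bounded by the desired expression under the smallness \eqref{assump-zeta1ref}.

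Finally, I would close the estimate. At $s=0$, Proposition \ref{prop_Q} yields $\|Q(\Sigma)\|_{\mathcal{A}^{0,1}_\lambda}\leq 3\eps\sqrt{\mu}|\zeta|_{1,\lambda}$, so under $6C\eps|\zeta|_{1,\lambda}\leq 1$ the term $C\|Q(\Sigma)\nabla^\mu\widetilde{\phi}^\mu_{\rm ref}\|_{\mathcal{A}^{0,0}_\lambda}$ is absorbed into the left-hand side and \eqref{ellipticestimate_0_ref} follows after collecting the source and boundary contributions. For $s>0$, Proposition \ref{productWieSob} splits $\|Q(\Sigma)\nabla^\mu\widetilde{\phi}^\mu_{\rm ref}\|_{\mathcal{A}^{s,0}_\lambda}$ into a piece multiplied by $\|\nabla^\mu\widetilde{\phi}^\mu_{\rm ref}\|_{\mathcal{A}^{s,0}_\lambda}$, absorbed on the left using \eqref{assump-zeta1ref}, and a piece multiplied by $\|\nabla^\mu\widetilde{\phi}^\mu_{\rm ref}\|_{\mathcal{A}^{0,0}_\lambda}$, estimated by the already-established \eqref{ellipticestimate_0_ref}; all remaining terms then collapse into the common form $C_s\eps\mu^{3/2}(\tfrac{1}{\mathrm{bo}}+1)|\zeta|_{s+4,\lambda}$.

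The main obstacle will be the second source term $\mu^2 A_\zeta\phi^1$: $\phi^1$ already contains $(1+\eps\zeta)^2\partial_{xx}\zeta$ together with a polynomial factor in $z$, so applying the second-order operator $A_\zeta$ generates a comparatively large number of cross-products of derivatives of $\zeta$ of orders up to four and of different powers of $(1+\eps\zeta)$. Each of them must be separately bounded in the Wiener--Sobolev norm through Proposition \ref{productWie}, Proposition \ref{productWieSob} and Proposition \ref{powerWie}, and then, using Proposition \ref{interpolation} together with the smallness \eqref{assump-zeta1ref}, repackaged as a single constant multiple of $|\zeta|_{s+4,\lambda}$, with explicit tracking ensuring that the constant $C_s$ remains independent of $\mu$.
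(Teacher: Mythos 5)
Your proposal is correct and follows essentially the same route as the paper: apply Proposition \ref{ellipestimate} to the Poisson reformulation with the two interior sources $\mu^{3/2}A_\zeta\phi^{1/2}$ and $\mu^{2}A_\zeta\phi^{1}$ and the boundary datum $\tfrac{\eps\sqrt{\mu}}{\mathrm{bo}}F_{\eps\sqrt{\mu}}(\partial_x\zeta)\partial_{xx}\zeta$, bound $A_\zeta\phi^{1}$ term by term with the product, power and interpolation estimates, control the datum via Proposition \ref{prop_Fepsmu}, and absorb the $Q(\Sigma)$ contribution with Proposition \ref{prop_Q} under \eqref{assump-zeta1ref}, closing the $s>0$ case through the $s=0$ bound exactly as in Proposition \ref{ell_est}.
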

 
 \begin{proof}	
 	Applying Proposition \ref{ellipestimate} to the Poisson problem \eqref{poisson_phimu} with 
 	$$f= -\mu^{3/2}A_\zeta(\partial_x,\partial_z)\phi^{1/2} -\mu^2A_\zeta(\partial_x,\partial_z)\phi^{1}, \qquad h=\tfrac{\eps\sqrt{\mu}}{\rm Bo}F_{\eps\sqrt{\mu}}(\partial_x\zeta)\partial_{xx}\zeta$$  as in \eqref{laplace_phimu_ref}, we obtain the following estimate 
 	\begin{equation}\label{first_est_graphimu_ref}
 	\begin{aligned}
 	\|\nabla^\mu \widetilde{\phi}^\mu_{\rm ref}\|_{\mathcal{A}_\lambda^{s,0}}
 	\leq C\big( & \|Q( \Sigma)\nabla^\mu \widetilde{\phi}^\mu_{\rm ref}\|_{\mathcal{A}^{s,0}_\lambda} + \mu^{3/2} \|A_\zeta(\partial_x,\partial_z)(\phi^{1/2}+\sqrt{\mu}\phi^1)\|_{\mathcal{A}^{s,0}_\lambda}\\& + \tfrac{\eps\sqrt{\mu}}{\rm bo} | F_{\eps\sqrt{\mu}}(\partial_x \zeta)\partial_{xx}\zeta|_{s,\lambda}  \big)
 	\end{aligned}
 	\end{equation} where we have used the fact that $A_\zeta(\partial_x,\partial_z)$ is linear in its argument. From \eqref{Asigma} and since $\mu<1$ it follows
 	\begin{equation}\label{est_Aphi12phi1}
 	\begin{aligned}
 	&\|A_\zeta(\partial_x,\partial_z)(\phi^{1/2} +  \sqrt{\mu}\phi^1)\|_{\mathcal{A}^{s,0}_\lambda}\\[5pt]
 	&\leq \tfrac{\eps}{\rm bo} |(1+\eps\zeta)\partial_{xxxx}\zeta|_{s,\lambda} +\|A_\zeta(\partial_x,\partial_z)\phi^{1}\|_{\mathcal{A}^{s,0}_\lambda}.
 	\end{aligned} 
 	\end{equation}
 	Using \eqref{phi1} and the definition of $A_\zeta(\partial_x,\partial_z)$ one can compute that 
 	\begin{equation}
 		\begin{aligned}
 		A_\zeta(\partial_x,\partial_z)\phi^1=& -\eps\left(\tfrac{z^2}{2}+z\right)(1+\eps\zeta)^3\partial_{xxxx}\zeta + 2 \eps^2(1+\eps\zeta)^2\partial_x \zeta \partial_{xxx}\zeta \\[5pt]&+ \eps^2(1+\eps\zeta)^2 (\partial_{xx}\zeta)^2+ \eps^3 (1+\eps\zeta)(\partial_x\zeta)^2\partial_{xx}\zeta.
 		\end{aligned}
 	\end{equation} 	Roughly speaking  $\|A_\zeta(\partial_x,\partial_z)\phi^{1}\|_{\mathcal{A}_\lambda^{s,0}}$ is the only term that changes with respect to \eqref{est_intermediate}. Hence we only focus on it while the remaining terms can be treated as in the proof of Proposition \ref{ell_est}. Then, by definition of the $\mathcal{A}^{s,0}_\lambda$-norm we have 
 	\begin{equation}\begin{aligned}
 		\|A_\zeta(\partial_x,\partial_z)\phi^{1}\|_{\mathcal{A}^{s,0}_\lambda}\leq \ & \tfrac{\eps}{3}|(1+\eps\zeta)^3\partial_{xxxx}\zeta|_{s,\lambda} +  2 \eps^2|(1+\eps\zeta)^2\partial_x \zeta \partial_{xxx}\zeta|_{s,\lambda}\\[5pt]& + \eps^2|(1+\eps\zeta)^2 (\partial_{xx}\zeta)^2|_{s,\lambda}+ \eps^3 |(1+\eps\zeta)(\partial_x\zeta)^2\partial_{xx}\zeta|_{s,\lambda}.
 	\end{aligned}
 	\end{equation}
 	 Using Proposition \ref{productWie}, Proposition \ref{powerWie} and Proposition \ref{interpolation} we get 
 	\begin{equation*}
 		\begin{aligned}
 		\|A_\zeta(\partial_x,\partial_z)\phi^{1}\|_{\mathcal{A}^{s,0}_\lambda}\leq \eps\bigg[ &\tfrac{1}{3} + (6K_{s} + K_{s,2})\eps|\zeta|_{0,\lambda} \\[5pt]&+ \left(K_{s,2}+1 + 6K_{s}(2K_{s}+1) + K_{s}(K_{s,2}+1)\right)\eps^2|\zeta|^2_{0,\lambda}\\[5pt]& + \big(\tfrac{1}{3}(K_{s,3}+1) +2K_{s}(K_{s,2}+2K_{s}) + 2K_{s}K_{s,2} \\[5pt]&+ K_{s}(1+K_{s}(K_{s,2}+1))\big)\eps^3|\zeta|^3_{0,\lambda}     \bigg]|\zeta|_{s+4,\lambda}.
 		\end{aligned}
 	\end{equation*}
 	From \eqref{assump-zeta1ref} we know that $\eps|\zeta|_{0,\lambda}\leq \eps|\zeta|_{1,\lambda}<1$ and recalling the definition of $K_{s,n}$ in \eqref{est_wie_power} it yields
\begin{equation*}
\begin{aligned}
\|A_\zeta(\partial_x,\partial_z)\phi^{1}\|_{\mathcal{A}^{s,0}_\lambda}&\leq \eps\left(1+ P(K_{s})\eps|\zeta|_{0,\lambda}  \right)|\zeta|_{s+4,\lambda}
\end{aligned}
\end{equation*}where $P(K_{s})$ is some polynomial function in $K_{s}$. 
Therefore starting from \eqref{first_est_graphimu_ref}, repeating the same computations as in the proof of Proposition \ref{ell_est} and using  \eqref{est_Aphi12phi1}, 
we derive for $s>0$
	\begin{equation*}
\begin{aligned}
&(1-3CK_s \eps \sqrt{\mu}|\zeta|_{1,\lambda})\|\nabla^\mu \widetilde{\phi}^\mu_{\rm ref}\|_{\mathcal{A}_\lambda^{s,0}}\\[5pt]& \leq  
C \eps\mu^{3/2}(\tfrac{1}{\rm bo}+1)(1+P(K_s) \eps|\zeta|_{1,\lambda})\left(\tfrac{3K_sC\eps\sqrt{\mu}|\zeta|_{1,\lambda}}{1-3C\eps\sqrt{\mu}|\zeta|_{1,\lambda}} +1\right) |\zeta|_{s+4,\lambda}.
\end{aligned}
\end{equation*}
The hypothesis \eqref{assump-zeta1ref} gives
\begin{equation*}
\begin{aligned}
(1-\tfrac{ \sqrt{\mu}}{2})\|\nabla^\mu \widetilde{\phi}^\mu\|_{\mathcal{A}_\lambda^{s,0}} \leq C (1+\tfrac{P(K_s)}{6CK_s}) \eps\mu^{3/2} (\tfrac{1}{\rm bo}+1) \left(1+\tfrac{\tfrac{\sqrt{\mu}}{2}}{1-\tfrac{\sqrt{\mu}}{2}} \right)  |\zeta|_{s+4,\lambda}.
\end{aligned}
\end{equation*} and since $\mu < 1$ we have $1-\tfrac{\sqrt{\mu}}{2}> \tfrac{1}{2}$ and 
 	\begin{equation*}
 	\begin{aligned}
 	\|\nabla^\mu \widetilde{\phi}^\mu\|_{\mathcal{A}_\lambda^{s,0}} \leq C_s\eps\mu^{3/2}(\tfrac{1}{\rm bo}+1) |\zeta|_{s+4,\lambda}
 	\end{aligned}
 	\end{equation*} with $C_s=4C (1+\tfrac{P(K_s)}{6CK_s})$.
 	Moreover, when $s=0$ we obtain 
 		\begin{equation}\label{est_gradphi_ref_wienersob0}
 	\begin{aligned}
 	\|\nabla^\mu \widetilde{\phi}^\mu_{\rm ref}\|_{\mathcal{A}_\lambda^{0,0}} \leq C\eps\mu^{3/2}(\tfrac{1}{\rm bo}+1) \frac{ 1+23\eps|\zeta|_{1,\lambda}}{(1- 3C\eps\sqrt{\mu} |\zeta|_{1,\lambda})} |\zeta|_{4,\lambda}
 	\end{aligned}
 	\end{equation}and \eqref{assump-zeta1ref}, using again that $1-\tfrac{\sqrt{\mu}}{2}> \tfrac{1}{2},$ implies
 	\begin{equation*}
 	\begin{aligned}
 	\|\nabla^\mu \widetilde{\phi}^\mu\|_{\mathcal{A}_\lambda^{0,0}} \leq C_0\eps \mu^{3/2} (\tfrac{1}{\rm bo}+1)  |\zeta|_{4,\lambda}
 	\end{aligned}
 	\end{equation*}with $C_0=2C(1+\tfrac{23}{6C}).$
 \end{proof}

 \section{Rigorous  justification of asymptotic models}\label{conv_sec}
 In this section we fully justify the thin-film asymptotic models that we have derived in Section \ref{lubri_sec} for the one-phase unstable gravity-driven Muskat problem in the Wiener framework.
For the sake of clarity, let us denote by $\zeta^{\rm MU}_\mu$ the solution to the evolution equation \eqref{HSeq_dimless} of the one-phase Muskat problem. First, we recall the existence theorem for the solution to the unstable gravity-driven one-phase Muskat problem derived in \cite{GG-BS2020} (Theorem 1.9). We adapt it to the notations we have used throughout this paper.

\begin{theorem}\label{theoMU}
Let $C_0=3120$, $0<\rm Bo,\sqrt{\mu} < 1$ and $T>0$ and let $\zeta_{\mathrm{in},\mu}^{\rm MU}\in\mathbb{A}^1_0$ be a zero-mean initial data such that
\begin{equation}
\label{smallness}|\zeta_{\mathrm{in},\mu}^{\rm MU}|_{1,0}< \min\{\tfrac{1}{C_0 \eps},1\}\left(1-\mathrm{Bo}\right)\sqrt{\mu} ,\end{equation}
then there exists a global solution to \eqref{heleshaw_dimless} with initial data $\zeta_{0,\mu}^{\rm MU}$. Moreover the solution becomes instantaneously analytic in a growing strip in the complex plane. In particular, for any
$$\nu\in \left[0,\tfrac{\sqrt{\mu}}{16}\left(\tfrac{1}{\rm Bo}-1\right)\right),$$
the solution $\zeta^{\rm MU}_\mu$ belongs to 
$$C([0,T]; \mathbb{A}^1_{\nu t})\cap L^1([0,T]; \mathbb{A}^4_{\nu t})$$
for any $T>0.$ Moreover, the following energy inequality holds true for any $t\in[0,T]$
\begin{equation}\label{HS_energy_ineq}|\zeta^{\rm MU}_\mu(t)|_{1,\nu t} + \tfrac{\sqrt{\mu}}{16}\left(\tfrac{1}{\rm Bo}-1\right)\int_{0}^{t} |\zeta^{\rm MU}_\mu(t')|_{4,\nu t'}dt' \leq |\zeta_{\mathrm{in},\mu}^{\rm MU}|_{1,0}\end{equation}
together with the decay 
\begin{equation*}|\zeta^{\rm MU}_\mu|_{1,\nu t} \leq |\zeta_{\mathrm{in},\mu}^{\rm MU}|_{1,0} \  e^{- t \tfrac{\sqrt{\mu}}{16}\left(\frac{1}{\rm Bo} -1\right)}.\end{equation*}
\end{theorem}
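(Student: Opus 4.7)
The plan is to adapt the methodology already used in the proofs of Theorems \ref{theoTF} and \ref{theoO(mu)}: derive a priori energy estimates in the analytic Wiener space $\mathbb{A}^1_{\nu t}$ via Fourier analysis, and combine them with a standard approximation argument (regularization plus Galerkin or Picard iteration) to produce a global weak solution. Since the statement is a restatement of \cite[Theorem 1.9]{GG-BS2020} in the present notations, in practice I would invoke that result and only verify that the scaling conventions match; what follows is how I would re-derive it from scratch.

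First, I would start from the depth-averaged form \eqref{heleshaw_dimless} and isolate the linear Dirichlet-to-Neumann contribution by decomposing $\Phi$ into the harmonic extension of $\eps\zeta+\tfrac{\eps}{\rm Bo}\partial_{xx}\zeta$ into the flat strip (linear part) and a remainder accounting for the moving free boundary and the genuine curvature $\kappa^{\eps,\mu}_\zeta$. An explicit Fourier computation (as in the derivation leading to \eqref{phi0} and the DtN symbol on the strip) shows that the linearized equation reads
\begin{equation*}
\partial_t \widehat{\zeta}(n) + |n|^2\left(\tfrac{|n|^2}{\rm Bo}-1\right)\tfrac{\tanh(\sqrt{\mu}|n|)}{\sqrt{\mu}|n|}\,\widehat{\zeta}(n)=\widehat{\rm NL}(n),
\end{equation*}
whose symbol is positive for all $|n|\geq 1$ precisely when $\rm Bo<1$. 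This is the dissipative mechanism that stabilizes the Rayleigh--Taylor unstable regime at every non-zero wavenumber.

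Next, I would control the nonlinear source $\widehat{\rm NL}$ by flattening the domain as in Section~\ref{muskat_sec} and proving an elliptic estimate for the full nonlinear part of $\Phi$, in complete analogy with Proposition~\ref{ell_est}, but using the algebraic identities $P(\Sigma)=\mathrm{Id}+Q(\Sigma)$ and the expansions of $\tfrac{1}{1+\eps\zeta}$ and of $F_{\eps\sqrt{\mu}}(\partial_x\zeta)$ (for which the required building blocks are exactly Lemma~\ref{estimate_G} and Propositions~\ref{prop_Q}, \ref{prop_Fepsmu}). Combining this with the Wiener algebra property (Proposition~\ref{productWie}) and interpolation (Proposition~\ref{interpolation}) yields $|{\rm NL}|_{1,\nu t}\lesssim \eps|\zeta|_{1,\nu t}|\zeta|_{4,\nu t}$. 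Multiplying the Fourier equation by $(1+|n|)e^{\nu t|n|}$, summing in $n$, using the elementary bound $\tanh(\sqrt{\mu}|n|)\geq \sqrt{\mu}$ for $|n|\geq 1$ (so that the symbol dominates $\sqrt{\mu}(\tfrac{1}{\rm Bo}-1)|n|^3$) and absorbing the analytic growth term $\nu|n||\widehat{\zeta}|$ from differentiating $e^{\nu t|n|}$ via the assumption $\nu<\tfrac{\sqrt{\mu}}{16}(\tfrac{1}{\rm Bo}-1)$, one arrives at
\begin{equation*}
\frac{d}{dt}|\zeta|_{1,\nu t}+\tfrac{\sqrt{\mu}}{16}\left(\tfrac{1}{\rm Bo}-1\right)|\zeta|_{4,\nu t}\leq C_0\eps\,|\zeta|_{1,\nu t}|\zeta|_{4,\nu t}.
\end{equation*}
The smallness hypothesis \eqref{smallness} is precisely tuned so that $C_0\eps|\zeta|_{1,\nu t}\leq \tfrac{\sqrt{\mu}}{32}(\tfrac{1}{\rm Bo}-1)$, which absorbs the right-hand side into the dissipation and yields both the energy inequality \eqref{HS_energy_ineq} and the exponential decay via Gr\"onwall. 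Existence is then obtained by running the same argument on a regularized problem (e.g.\ replacing $\zeta$ by $\zeta_N$ truncated in Fourier) and passing to the limit using the uniform estimate.

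The main obstacle is the elliptic step: bounding the full (not asymptotically truncated) nonlinear potential $\Phi-\Phi_{\rm lin}$ in Wiener-Sobolev spaces with the analytic weight $e^{\nu t|n|}$ while keeping the explicit constant $C_0=3120$ in \eqref{smallness}. This requires carefully tracking the $\mu$-scaling of the Dirichlet-to-Neumann operator and summing the geometric series for $Q(\Sigma)$ and $F_{\eps\sqrt{\mu}}(\partial_x\zeta)$ with sharp constants; everything else is a routine bootstrap from the a priori estimate.
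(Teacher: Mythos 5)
Your main move—invoking Theorem 1.9 of \cite{GG-BS2020} after checking that the scaling conventions match—is exactly what the paper does: Theorem \ref{theoMU} is stated without proof as a recollection of that result adapted to the present notation, so your proposal takes essentially the same approach. Your supplementary re-derivation sketch is consistent with the strategy of \cite{GG-BS2020} (Fourier energy estimates in $\mathbb{A}^1_{\nu t}$ plus Wiener--Sobolev elliptic bounds for the flattened potential), up to a harmless quantitative slip: $\tanh(\sqrt{\mu}|n|)\geq\sqrt{\mu}$ fails at $|n|=1$ since $\tanh x<x$, but $\tanh(\sqrt{\mu}|n|)\geq\tanh(1)\sqrt{\mu}$ holds for $|n|\geq 1$, $\sqrt{\mu}<1$, and this weaker bound still yields the stated dissipation coefficient $\tfrac{\sqrt{\mu}}{16}\left(\tfrac{1}{\rm Bo}-1\right)$.
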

We will use later the energy estimate of Theorem \ref{theoMU} to show the asymptotic stability of the thin film approximation.

\subsection{Justification of the thin film approximation at order $O(\mu)$} 
In this subsection we justify in a rigorous way that the thin film equation \eqref{thin-film} is an approximation at order $O(\mu)$ of the one-phase unstable Muskat problem \eqref{HSeq_dimless}. More precisely, we show that the difference between the solution to \eqref{HSeq_dimless} and the solution to \eqref{thin-film} is of order $O(\mu)$ in the Wiener framework.
Let us first denote by $\zeta^{\rm app}$ the solution to the thin-film approximation \eqref{thin-film} and by $f$ the difference $\zeta^{\rm MU}_\mu- \zeta^{\rm app}$. By subtracting \eqref{thin-film} to \eqref{HSeq_dimless}, we obtain that the difference $f$ satisfies the following evolution equation:
 \begin{equation}\label{eq_diff}
 \begin{aligned}
 &\partial_t f + \tfrac{\sqrt{\mu}}{\rm bo}\partial_{xxxx}f+ \partial_{xx}f =-\tfrac{\eps}{\rm Bo}N_1(f,\zeta^{\rm app}) -\eps N_2(f,\zeta^{\rm app}) - \tfrac{1}{\eps} N(\zeta^{\rm MU}_\mu, \widetilde{\phi}^\mu)\\[5pt] 
 \end{aligned}
 \end{equation}
with 
\begin{equation*}
\begin{aligned}
N_1(f,\zeta^{\rm app})= & f\partial_{xxxx} f  +\partial_x f \partial_{xxx}f+  f \partial_{xxxx}\zeta^{\rm app}  + \zeta^{\rm app}\partial_{xxxx} f \\[5pt]&+\partial_x f \partial_{xxx}\zeta^{\rm app}  + \partial_x \zeta^{\rm app}\partial_{xxx}f,\\[10pt]
N_2(f,\zeta^{\rm app})= & f \partial_{xx}f+ (\partial_x f)^2  + f\partial_{xx}\zeta^{\rm app} + 2\partial_x f \partial_x \zeta^{\rm app}+\zeta^{\rm app}\partial_{xx} f
\end{aligned}
\end{equation*}and
\begin{equation*}
N(\zeta^{\rm MU}_\mu, \widetilde{\phi}^\mu)=\partial_x\left(\int_{-1}^0(1+\eps\zeta^{\rm MU}_\mu)\partial_x \widetilde{\phi}^\mu - \eps\partial_x\zeta^{\rm MU}_\mu(z+1) \partial_z\widetilde{\phi}^\mu\right)
\end{equation*}
In order to close energy estimates, we need to control the first two terms in the right hand side of \eqref{eq_diff}. 
\begin{proposition}\label{N1N2_prop}
Let $N_1(f,\zeta^{\rm app})$ and $N_2(f,\zeta^{\rm app})$ be as in \eqref{eq_diff}. The following inequality holds true:
\begin{equation}\label{N1N2_est}
\begin{aligned}
\tfrac{\eps}{\rm Bo}|N_1(f,&\zeta^{\rm app})|_{0,\nu t}+\eps |N_2(f,\zeta^{\rm app})|_{0,\nu t} \\[5pt]&\leq 
2\eps\left(\tfrac{1}{\rm Bo}+ 1 \right)\left(|\zeta^{\rm app}|_{0,\nu t }|f|_{4,\nu t}  +   ( 2|\zeta^{\rm app}|_{4,\nu t }+ |\zeta^{\rm MU}_\mu|_{4,\nu t })|f|_{0,\nu t}\right).
\end{aligned}
\end{equation}
\end{proposition}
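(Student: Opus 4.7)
The plan is to bound each of the eleven monomials appearing in $N_1$ and $N_2$ separately, using the product estimate of Proposition~\ref{productWie} at $s=0$ (which reduces to $|uv|_{0,\nu t}\leq|u|_{0,\nu t}|v|_{0,\nu t}$) together with the interpolation inequality of Proposition~\ref{interpolation}, and then to collect them.

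For the mixed terms where one factor already carries $0$ or $4$ derivatives ($f\partial_{xxxx}\zeta^{\rm app}$, $\zeta^{\rm app}\partial_{xxxx}f$, $f\partial_{xx}\zeta^{\rm app}$ and $\zeta^{\rm app}\partial_{xx}f$), the product estimate directly gives bounds of the desired shape after the trivial inclusion $|\zeta^{\rm app}|_{k,\nu t}\leq|\zeta^{\rm app}|_{4,\nu t}$ for $k\leq 4$. For the mixed terms with intermediate derivative counts, namely $\partial_x f\,\partial_{xxx}\zeta^{\rm app}$ and $\partial_x\zeta^{\rm app}\,\partial_{xxx}f$ in $N_1$ and $\partial_x f\,\partial_x\zeta^{\rm app}$ in $N_2$, I would combine Proposition~\ref{interpolation} with Young's inequality to produce the two elementary bounds
\begin{equation*}
|u|_{1,\nu t}|v|_{3,\nu t}\leq \tfrac{3}{4}|u|_{0,\nu t}|v|_{4,\nu t}+\tfrac{1}{4}|u|_{4,\nu t}|v|_{0,\nu t}
\end{equation*}
and, after majorizing $|\cdot|_{2,\nu t}\leq|\cdot|_{4,\nu t}$,
\begin{equation*}
|u|_{1,\nu t}|v|_{1,\nu t}\leq \tfrac{1}{2}\bigl(|u|_{0,\nu t}|v|_{4,\nu t}+|u|_{4,\nu t}|v|_{0,\nu t}\bigr).
\end{equation*}

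The quadratic-in-$f$ terms $f\partial_{xxxx}f$, $\partial_x f\,\partial_{xxx}f$, $f\partial_{xx}f$ and $(\partial_x f)^2$ do not fit the target right-hand side by themselves. The key move is to apply $f=\zeta^{\rm MU}_\mu-\zeta^{\rm app}$ on the highest-derivative factor of each (combined, for the two intermediate ones, with $|f|_{1,\nu t}|f|_{3,\nu t}\leq|f|_{0,\nu t}|f|_{4,\nu t}$ and $|f|_{1,\nu t}^2\leq|f|_{0,\nu t}|f|_{2,\nu t}$ from Proposition~\ref{interpolation}), which reduces each to a contribution of the form $|f|_{0,\nu t}(|\zeta^{\rm MU}_\mu|_{4,\nu t}+|\zeta^{\rm app}|_{4,\nu t})$. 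Summing all eleven contributions, each of $|N_1|_{0,\nu t}$ and $|N_2|_{0,\nu t}$ is majorized by the common expression $2|\zeta^{\rm app}|_{0,\nu t}|f|_{4,\nu t}+(4|\zeta^{\rm app}|_{4,\nu t}+2|\zeta^{\rm MU}_\mu|_{4,\nu t})|f|_{0,\nu t}$, so that multiplying by $\eps/\mathrm{Bo}$ and $\eps$ respectively and factoring out $2\eps(1/\mathrm{Bo}+1)$ yields \eqref{N1N2_est}.

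The only delicate point is the bookkeeping: the splitting $f=\zeta^{\rm MU}_\mu-\zeta^{\rm app}$ must be applied on the higher-derivative factor, so that the Young exponents coming from the interpolation step do not produce stray $|\zeta^{\rm MU}_\mu|_{0,\nu t}$ terms (absent from the right-hand side), and one must verify that the coefficients of the three shapes $|\zeta^{\rm app}|_{0}|f|_{4}$, $|\zeta^{\rm app}|_4|f|_0$ and $|\zeta^{\rm MU}_\mu|_4|f|_0$ add up precisely to $(2,4,2)$ in each of $|N_1|_{0,\nu t}$ and $|N_2|_{0,\nu t}$.
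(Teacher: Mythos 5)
Your proposal is correct and follows essentially the same route as the paper: the $s=0$ product estimate, the interpolation inequality combined with Young's inequality for the intermediate-derivative pairs, and the triangle inequality $|f|_{4,\nu t}\leq|\zeta^{\rm MU}_\mu|_{4,\nu t}+|\zeta^{\rm app}|_{4,\nu t}$ applied to the pure-$f$ contributions; your coefficient count $(2,4,2)$ for each of $|N_1|_{0,\nu t}$ and $|N_2|_{0,\nu t}$ checks out and reproduces the stated bound. The only difference is presentational: you bound the eleven monomials one by one with explicit Young weights, whereas the paper first groups the symmetric intermediate terms and substitutes $f=\zeta^{\rm MU}_\mu-\zeta^{\rm app}$ at the very end.
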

\begin{proof}
Using Proposition \ref{productWie} and Proposition \ref{interpolation}, we get
\begin{equation*}
\begin{aligned}
|N_1(f,\zeta^{\rm app})|_{0,\nu t} \leq  \ & \left(2 |f|_{0,\nu t}+ |\zeta^{\rm app}|_{0,\nu t}\right)|f|_{4,\nu t} + |f|_{0,\nu t}  |\zeta^{\rm app}|_{4,\nu t}\\[5pt]&+ |\zeta^{\rm app}|_{1,\nu t}|f|_{3,\nu t}+|\zeta^{\rm app}|_{3,\nu t}|f|_{1,\nu t}
\end{aligned}
\end{equation*}and
\begin{equation*}
\begin{aligned}
|N_2(f,\zeta^{\rm app})|_{0,\nu t} \leq  \  \left(2 |f|_{0,\nu t}+ |\zeta^{\rm app}|_{0,\nu t}\right)|f|_{2,\nu t} +  |f|_{0,\nu t}|\zeta^{\rm app}|_{2,\nu t} + 2 |\zeta^{\rm app}|_{1,\nu t}|f|_{1,\nu t}.
\end{aligned}
\end{equation*}
Using again Proposition \ref{interpolation} and Young's inequality, we have that 

\begin{equation}\begin{aligned}
 |\zeta^{\rm app}|_{1,\nu t}|f|_{3,\nu t}+&|\zeta^{\rm app}|_{3,\nu t}|f|_{1,\nu t} +	  2 |\zeta^{\rm app}|_{1,\nu t}|f|_{1,\nu t}\\[5pt]& \leq 2 \left( |\zeta^{\rm app}|_{4,\nu t }|f|_{0,\nu t} + |\zeta^{\rm app}|_{0,\nu t }|f|_{4,\nu t} \right),
 \end{aligned}
\end{equation} which gives the following control on the first two terms of the right-hand side of \eqref{eq_diff}:
\begin{equation*}
\begin{aligned}
&\tfrac{\eps}{\rm Bo}|N_1(f,\zeta^{\rm app})|_{0,\nu t} +\eps |N_2(f,\zeta^{\rm app})|_{0,\nu t} \\[5pt]&\leq 2\eps\left(\tfrac{1}{\rm Bo} + 1 \right)\left(|\zeta^{\rm app}|_{0,\nu t } |f|_{4,\nu t} +  (|f|_{4,\nu t}+|\zeta^{\rm app}|_{4,\nu t })|f|_{0,\nu t}\right)\\[5pt]&\leq 
2\eps\left(\tfrac{1}{\rm Bo}+ 1 \right)\left(|\zeta^{\rm app}|_{0,\nu t }|f|_{4,\nu t}  +   ( 2|\zeta^{\rm app}|_{4,\nu t }+ |\zeta^{\rm MU}_\mu|_{4,\nu t })|f|_{0,\nu t}\right), 
\end{aligned}
\end{equation*}where we have used the fact that  $f=\zeta^{\rm MU}_\mu - \zeta^{\rm app}_{\rm ref}.$
\end{proof}
We focus now on the term $N(\zeta^{\rm MU}_\mu, \widetilde{\phi}^\mu)$. Our goal is to show that its Wiener norm can be controlled by $\mu$ multiplied by a function uniformly bounded with respect to $\mu$ in $L^1([0,T])$. To do that, we first show a Poincaré-like inequality for functions in $\mathcal{A}^{s,1}_\lambda$.
\begin{lemma}\label{poincare}
	Let $f\in \mathcal{A}^{s,1}_\lambda$ for $s,\lambda\geq 0$. Then $\|f- f_{|_{z=-1,0}}\|_{\mathcal{A}^{s,0}_\lambda}\leq \|\partial_zf\|_{\mathcal{A}^{s,0}_\lambda}$.
\end{lemma}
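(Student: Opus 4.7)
The plan is a direct application of the fundamental theorem of calculus applied Fourier mode by Fourier mode. Fix the endpoint $z_0\in\{-1,0\}$ at which the trace is taken. For every $n\in\mathbb{Z}$ and every $z\in(-1,0)$, since $\partial_z\widehat{f}(n,\cdot)\in L^1(-1,0)$, one may write
\begin{equation*}
\widehat{f}(n,z)-\widehat{f}(n,z_0)=\int_{z_0}^{z}\partial_z\widehat{f}(n,r)\,dr,
\end{equation*}
so that
\begin{equation*}
\bigl|\widehat{f}(n,z)-\widehat{f}(n,z_0)\bigr|\leq \int_{-1}^{0}\bigl|\partial_z\widehat{f}(n,r)\bigr|\,dr,
\end{equation*}
the key point being that the right-hand side is independent of $z$.

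Next, I would plug this pointwise-in-$z$ bound into the definition of the $\mathcal{A}^{s,0}_\lambda$-norm of $f-f_{|_{z=z_0}}$. By Fubini and the fact that the strip $(-1,0)$ has length one, the outer integration in $z$ merely reproduces the bound:
\begin{equation*}
\begin{aligned}
\bigl\|f-f_{|_{z=z_0}}\bigr\|_{\mathcal{A}^{s,0}_\lambda}
&=\sum_{n\in\mathbb{Z}}(1+|n|)^{s}e^{\lambda|n|}\int_{-1}^{0}\bigl|\widehat{f}(n,z)-\widehat{f}(n,z_0)\bigr|\,dz\\
&\leq \sum_{n\in\mathbb{Z}}(1+|n|)^{s}e^{\lambda|n|}\int_{-1}^{0}\int_{-1}^{0}\bigl|\partial_z\widehat{f}(n,r)\bigr|\,dr\,dz\\
&=\sum_{n\in\mathbb{Z}}(1+|n|)^{s}e^{\lambda|n|}\int_{-1}^{0}\bigl|\partial_z\widehat{f}(n,r)\bigr|\,dr
=\|\partial_zf\|_{\mathcal{A}^{s,0}_\lambda},
\end{aligned}
\end{equation*}
yielding the claim for either choice $z_0=-1$ or $z_0=0$.

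There is no real obstacle here: the inequality is just the Wiener-Sobolev analogue of the one-dimensional Poincaré inequality on an interval of length one, and since both sides of the definition of the norm only involve the $L^1$-norm in $z$ of the Fourier modes, commuting the Fourier sum with the $z$-integrations (which is legitimate by the integrability assumption $\partial_z\widehat{f}(n,\cdot)\in L^1(-1,0)$) is immediate. The only minor point to be careful about is the direction of integration when $z<z_0$, which is handled by taking absolute values and extending the integration to the full interval $(-1,0)$.
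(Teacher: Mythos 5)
Your argument is correct and is essentially the paper's own proof: the fundamental theorem of calculus mode by mode, bounding $|\widehat{f}(n,z)-\widehat{f}(n,z_0)|$ by $\int_{-1}^{0}|\partial_z\widehat{f}(n,r)|\,dr$ and summing against the Wiener weights, with the unit length of the strip making the $z$-integration harmless. Nothing to add.
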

\begin{proof}
 For $f\in \mathcal{A}^{s,1}_\lambda$ and $z\in[-1,0]$, we have 
	\begin{equation*}\begin{aligned}
	|\hat{f}(n,z)- \hat{f}(n,-1)|=\left|\int_{-1}^z \partial_r \hat{f}(n,r)dr\right| \leq \int_{-1}^z|\partial_r \hat{f}(n,r) |dr \leq \int_{-1}^0|\partial_z \hat{f}(n,z)|dz,
	\end{aligned}
	\end{equation*}from which it follows that
$	\|f- f_{|_{z=-1}}\|_{\mathcal{A}^{s,0}_\lambda}\leq \|\partial_z f\|_{\mathcal{A}^{s,0}_\lambda}$. The same estimate holds replacing $f_{|_{z=-1}}$ with $f_{|_{z=0}}$. 
\end{proof}
 \begin{proposition}\label{propNphimu}
Let $\mu<1$ and $N(\zeta^{\rm MU}_\mu, \widetilde{\phi}^\mu)$ be as in \eqref{eq_diff} with $\widetilde{\phi}^\mu$ solution to \eqref{laplace_phimu2} and $\zeta^{\rm MU}_\mu$ solution to \eqref{HSeq_dimless}. Then, the following inequality holds:
\begin{equation}\label{Nphimu_est}
	\tfrac{1}{\eps}|N(\zeta^{\rm MU}_\mu, \widetilde{\phi}^\mu)|_{0,\nu t}\leq \mu  R^\mu,
\end{equation}
for some time-dependent function $R^\mu.$ Moreover, if the initial data $\zeta^{\rm MU}_{\mathrm{in},\mu}$  belongs to $\mathbb{A}^{3}_0$ then $ R^\mu$ is uniformly bounded  in $L^1([0,T])$ with respect of $\mu$ for any $T>0$.
 \end{proposition}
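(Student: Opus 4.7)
\smallskip
\textbf{Proof proposal for Proposition \ref{propNphimu}.}
The plan is to avoid the naive estimate $\|\partial_x\widetilde{\phi}^\mu\|_{\mathcal{A}^{s,0}_{\nu t}}\lesssim \eps\sqrt{\mu}$ (which would only give an $O(\sqrt{\mu})$ bound on $N$) by first massaging the integrand in $N$ so that the outer $\partial_x$ acts on better quantities. Concretely, since $\zeta^{\rm MU}_\mu$ is $z$-independent, I integrate by parts in $z$ the term $\int_{-1}^0(z+1)\partial_z\widetilde{\phi}^\mu dz$. Using the top boundary value $h:=\widetilde{\phi}^\mu|_{z=0}=\tfrac{\eps}{\rm Bo}F_{\eps\sqrt{\mu}}(\partial_x\zeta^{\rm MU}_\mu)\partial_{xx}\zeta^{\rm MU}_\mu$ and the vertical average $\Psi(x):=\int_{-1}^0\widetilde{\phi}^\mu(x,z)dz$, one rewrites
\begin{equation*}
\int_{-1}^0\!\!\bigl((1+\eps\zeta^{\rm MU}_\mu)\partial_x\widetilde{\phi}^\mu-\eps(z+1)\partial_x\zeta^{\rm MU}_\mu\partial_z\widetilde{\phi}^\mu\bigr)dz=\partial_x\bigl[(1+\eps\zeta^{\rm MU}_\mu)\Psi\bigr]-\eps\partial_x\zeta^{\rm MU}_\mu\, h.
\end{equation*}
Applying the outer $\partial_x$ and dividing by $\eps$ gives the key identity
\begin{equation*}
\tfrac{1}{\eps}N=\tfrac{1}{\eps}\partial_{xx}\Psi+\partial_{xx}[\zeta^{\rm MU}_\mu \Psi]-\partial_x[\partial_x\zeta^{\rm MU}_\mu\, h],
\end{equation*}
which isolates the delicate factor $\tfrac{1}{\eps}$ in front of the $z$-averaged potential.

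\smallskip
Next I would control $\Psi$ and $h$ uniformly in $\mu$ at the $O(\eps\mu)$ scale. For $\Psi$, since $|\Psi|_{s,\nu t}\leq \|\widetilde{\phi}^\mu\|_{\mathcal{A}^{s,0}_{\nu t}}$ and Lemma \ref{poincare} applied to $\widetilde{\phi}^\mu-h$ yields $\|\widetilde{\phi}^\mu\|_{\mathcal{A}^{s,0}_{\nu t}}\leq |h|_{s,\nu t}+\|\partial_z\widetilde{\phi}^\mu\|_{\mathcal{A}^{s,0}_{\nu t}}$, the elliptic estimate \eqref{ellipticestimate_s} of Proposition \ref{ell_est} gives
\begin{equation*}
\|\partial_z\widetilde{\phi}^\mu\|_{\mathcal{A}^{s,0}_{\nu t}}\leq 8C\eps\mu\bigl(|\zeta^{\rm MU}_\mu|_{s+2,\nu t}+\tfrac{1}{\rm Bo}|\zeta^{\rm MU}_\mu|_{s+4,\nu t}\bigr),
\end{equation*}
while Proposition \ref{prop_Fepsmu} combined with Proposition \ref{productWie} yields $|h|_{s,\nu t}\leq \tfrac{K_s\eps^2\mu}{\rm Bo}\bigl(|\zeta^{\rm MU}_\mu|_{s+1,\nu t}|\zeta^{\rm MU}_\mu|_{2,\nu t}+|\zeta^{\rm MU}_\mu|_{1,\nu t}|\zeta^{\rm MU}_\mu|_{s+2,\nu t}\bigr)$. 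Plugging these into the three terms of the identity above and applying the product estimate of Proposition \ref{productWie} term by term, the crucial term $\tfrac{1}{\eps}\partial_{xx}\Psi$ is bounded by $\tfrac{1}{\eps}\|\widetilde{\phi}^\mu\|_{\mathcal{A}^{2,0}_{\nu t}}\leq C\mu\bigl(|\zeta^{\rm MU}_\mu|_{4,\nu t}+\tfrac{1}{\rm Bo}|\zeta^{\rm MU}_\mu|_{6,\nu t}+\text{lower order}\bigr)$, and the remaining two terms are of the same or smaller order. Altogether
\begin{equation*}
\tfrac{1}{\eps}|N|_{0,\nu t}\leq \mu R^\mu,
\end{equation*}
with $R^\mu$ a polynomial expression in Wiener norms $|\zeta^{\rm MU}_\mu|_{s,\nu t}$ for $s\leq 6$.

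\smallskip
To obtain the uniform $L^1$-in-time control, I would invoke persistence of higher Wiener regularity for the Muskat solution constructed in Theorem \ref{theoMU}: the same energy argument as in \cite{GG-BS2020}, run at the level of $\mathbb{A}^3_{\nu t}$, yields that initial data in $\mathbb{A}^3_0$ produce a solution $\zeta^{\rm MU}_\mu\in L^\infty([0,T];\mathbb{A}^3_{\nu t})\cap L^1([0,T];\mathbb{A}^6_{\nu t})$ with norms bounded uniformly in $\mu$. Then every term in $R^\mu$, after an interpolation of intermediate norms via Proposition \ref{interpolation} against the $L^\infty_t\mathbb{A}^3_{\nu t}$ and $L^1_t\mathbb{A}^6_{\nu t}$ bounds, is integrable in $t$ uniformly in $\mu$, which is the claim.

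\smallskip
The main obstacle is the first step: a direct estimate of $\int\partial_x\widetilde{\phi}^\mu dz$ via $\|\partial_x\widetilde{\phi}^\mu\|_{\mathcal{A}^{0,0}_{\nu t}}$ only provides $O(\sqrt{\mu})$ because the elliptic estimate controls $\sqrt{\mu}\partial_x\widetilde{\phi}^\mu$, not $\partial_x\widetilde{\phi}^\mu$ itself. Recovering the optimal $\mu$-scaling therefore hinges on the integration by parts in $z$ that simultaneously transfers both $\partial_x\widetilde{\phi}^\mu$ and $\partial_z\widetilde{\phi}^\mu$ to the averaged quantity $\Psi$, together with the Poincaré inequality of Lemma \ref{poincare} which converts the good bound on $\partial_z\widetilde{\phi}^\mu$ into a bound on $\widetilde{\phi}^\mu$; the residual $\tfrac{1}{\eps}$ prefactor is then cancelled by the $\eps$ factor built into Proposition \ref{ell_est}.
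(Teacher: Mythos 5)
Your proposal is correct and follows essentially the same route as the paper's proof: the decisive mechanism in both is to trade the missing $\sqrt{\mu}$ on $\partial_x\widetilde{\phi}^\mu$ for one extra $x$-derivative through the vertical Poincar\'e inequality of Lemma \ref{poincare}, to control the explicit trace $\widetilde{\phi}^\mu|_{z=0}=\tfrac{\eps}{\rm Bo}F_{\eps\sqrt{\mu}}(\partial_x\zeta)\partial_{xx}\zeta$ at order $O(\eps^2\mu)$ via Propositions \ref{prop_Fepsmu} and \ref{productWie}, to invoke the elliptic estimate of Proposition \ref{ell_est} (whose built-in factor $\eps\mu$ cancels the $1/\eps$ and yields the top norm $|\zeta^{\rm MU}_\mu|_{6,\nu t}$), and to conclude the uniform $L^1([0,T])$ bound from the higher-regularity version of Theorem \ref{theoMU} with data in $\mathbb{A}^3_0$. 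The only cosmetic difference is bookkeeping: you integrate by parts in $z$ to introduce the vertical average $\Psi$ and apply the Poincar\'e lemma to $\widetilde{\phi}^\mu$ itself, whereas the paper adds and subtracts the trace $\partial_x\widetilde{\phi}^\mu|_{z=0}$ inside the integrals and applies it to $\partial_x\widetilde{\phi}^\mu$; the resulting estimates coincide.
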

\begin{proof}
From the definition of $N(\zeta^{\rm MU}_\mu, \widetilde{\phi}^\mu)$ we have
\begin{equation}\label{estNphimu}
\begin{aligned}
&|N(\zeta^{\rm MU}_\mu, \widetilde{\phi}^\mu)|_{0,\nu t}\\&\leq \left|\partial_x\left((1+\eps\zeta^{\rm MU}_{\mu})\int_{-1}^0 \partial_x \tilde{\phi}^\mu dz\right)\right|_{0,\nu t}  +\left|\eps\partial_x\zeta^{\rm MU}_{\mu}\int_{-1}^{0}(z+1) \partial_z\widetilde{\phi}^\mu dz\right|_{1,\nu t}.
\end{aligned}
\end{equation}
Using Proposition \ref{productWie}, we estimate the first term in the right-hand side as  
\begin{equation*}
\begin{aligned}
&\left|\partial_x\left((1+\eps\zeta^{\rm MU}_{\mu})\int_{-1}^0 \partial_x \tilde{\phi}^\mu dz\right)\right|_{0,\nu t} \\&\leq   (1+\eps|\zeta^{\rm MU}_{\mu}|_{0,\nu t})\left(\left|\int_{-1}^0 (\partial_x \tilde{\phi}^\mu -\partial_x \tilde{\phi}^\mu_{|_{z=0}})dz\right|_{1,\nu t} + | \partial_x\tilde{\phi}^\mu_{|_{z=0}}|_{1,\nu t}\right)\\&\qquad + \eps|\zeta^{\rm MU}_{\mu}|_{1,\nu t}\left(\left|\int_{-1}^0 (\partial_x \tilde{\phi}^\mu -\partial_x \tilde{\phi}^\mu_{|_{z=0}})dz\right|_{0,\nu t} + | \partial_x\tilde{\phi}^\mu_{|_{z=0}}|_{0,\nu t}\right)\\&
\leq   (1+\eps|\zeta^{\rm MU}_{\mu}|_{0,\nu t})(\|\partial_x \tilde{\phi}^\mu -\partial_x \tilde{\phi}^\mu_{|_{z=0}}\|_{\mathcal{A}^{1,0}_{\nu t}} + | \tilde{\phi}^\mu_{|_{z=0}}|_{2,\nu t})\\&\qquad+ \eps|\zeta^{\rm MU}_{\mu}|_{1,\nu t}(\|\partial_x \tilde{\phi}^\mu -\partial_x \tilde{\phi}^\mu_{|_{z=0}}\|_{\mathcal{A}^{0,0}_{\nu t}} + | \tilde{\phi}^\mu_{|_{z=0}}|_{1,\nu t})
\end{aligned}
\end{equation*}
and the second term as
\begin{equation*}
\begin{aligned}
&\left|\eps\partial_x\zeta^{\rm MU}_{\mu} \int_{-1}^0(z+1)\partial_z \widetilde{\phi}^\mu dz\right|_{1,\nu t} \\&\leq 
\eps|\zeta^{\rm MU}_{\mu}|_{1,\nu t}\left|\int_{-1}^0(z+1) \partial_z \widetilde{\phi}^\mu dz\right|_{1,\nu t}+ \eps|\zeta^{\rm MU}_{\mu}|_{2,\nu t}\left|\int_{-1}^0(z+1) \partial_z \widetilde{\phi}^\mu dz\right|_{0,\nu t}\\&\leq 
\eps|\zeta^{\rm MU}_{\mu}|_{1,\nu t}\|\partial_z \widetilde{\phi}^\mu\|_{\mathcal{A}^{1,0}_{\nu t}} + \eps|\zeta^{\rm MU}_{\mu}|_{2,\nu t}\|\partial_z \widetilde{\phi}^\mu\|_{\mathcal{A}^{0,0}_{\nu t}}.
\end{aligned}
\end{equation*}
Lemma \ref{poincare} permits to obtain a control of the $x$-derivative of the potential remainder with the dimensionless gradient $\nabla^\mu \widetilde{\phi}^\mu$ without the appearance of the singular term $(\sqrt{\mu})^{-1}$ but at the cost of increasing the regularity of the potential remainder in the $x$-direction. More precisely, for $i=0,1$, $$\|\partial_x \widetilde{\phi}^\mu\ - \partial_x \widetilde{\phi}_{|_{z=0}}^\mu\|_{\mathcal{A}^{i,0}_{\nu t}}\leq \|\partial_z\partial_x \widetilde{\phi}^\mu\|_{\mathcal{A}^{i,0}_{\nu t}}\leq \|\partial_z \widetilde{\phi}^\mu\|_{\mathcal{A}^{i+1,0}_{\nu t}}\leq  \|\nabla^{\mu} \widetilde{\phi}^\mu\|_{\mathcal{A}^{i+1,0}_{\nu t}},$$
Then, from \eqref{estNphimu} we obtain 
\begin{equation}\label{estNmu_poincare}
\begin{aligned}
|N(\zeta^{\rm MU}_\mu, &\widetilde{\phi}^\mu)|_{0,\nu t}\leq \ (1+\eps |\zeta^{\rm MU}_{\mu}|_{0,\nu t})(\|\nabla^\mu \widetilde{\phi}^\mu\|_{\mathcal{A}^{2,0}_{\nu t}}+ |\widetilde{\phi}^\mu_{|_{z=0}}|_{2,\nu t}) \\[5pt]&+ \eps |\zeta^{\rm MU}_{\mu}|_{1,\lambda}(2\|\nabla^\mu \widetilde{\phi}^\mu\|_{\mathcal{A}^{1,0}_{\nu t}}+ |\widetilde{\phi}^\mu_{|_{z=0}}|_{1,\nu t})+\eps |\zeta^{\rm MU}_{\mu}|_{2,\nu t} \|\nabla^\mu\widetilde{\phi}^\mu\|_{\mathcal{A}^{0,0}_{\nu t}}.
\end{aligned}
\end{equation}From \eqref{laplace_phimu} we have $\widetilde{\phi}^\mu_{|_{z=0}}=\tfrac{\eps}{\rm Bo}F_{\eps\sqrt{\mu}}(\partial_x \zeta)\partial_{xx}\zeta$. Then, Proposition \ref{productWie}, Proposition \ref{interpolation} and 
Proposition \ref{ell_est}  yield
\begin{equation*}
\begin{aligned}
|N(\zeta^{\rm MU}_\mu, \widetilde{\phi}^\mu)|_{0,\nu t} \leq   8C\eps \mu (1+\tfrac{1}{\rm Bo}) (1+ 5\eps|\zeta^{\rm MU}_\mu|_{0,\nu t}+ 2\eps^2|\zeta^{\rm MU}_\mu|^2_{0,\nu t})|\zeta^{\rm MU}_\mu|_{6,\nu t}
\end{aligned}
\end{equation*}
and, by defining
\begin{equation}\label{Rmu}
	R^\mu=8 C \left(1+\tfrac{1}{\rm Bo}\right)\left(1+ 5\eps|\zeta^{\rm MU}_{\mu}|_{0,\nu t} + 2\eps^2|\zeta^{\rm MU}_\mu|^2_{0,\nu t} \right) |\zeta^{\rm MU}_{\mu}|_{6,\nu t},
\end{equation} the desired estimate \eqref{Nphimu_est} follows. Let us remark that Theorem \ref{theoMU} gives a minimal regularity result that stating the uniform boundedness of $\zeta^{\rm MU}_{\mu}\in L^\infty(0,T;\mathbb{A}^1_{\nu t})\cap L^1(0,T;\mathbb{A}^4_{\nu t})$ with respect to $\mu$ for any $T>0$. More precisely, due to the smallness condition \eqref{smallness} and since $\sqrt{\mu}<1$ the following bounds hold for any $t\in[0,T]$
\begin{equation*}
	|\zeta^{\rm MU}_{\mu}(t)|_{1,\nu t}\leq	|\zeta^{\rm MU}_{\mathrm{in}, \mu}|_{1,0}< \min\{\tfrac{1}{C_0\eps},1\}(1-\mathrm{Bo}),
\end{equation*}and 
\begin{equation*}
\int_0^t|\zeta^{\rm MU}_{\mu}(t')|_{4,\nu t'}dt'< 16 \min\{\tfrac{1}{C_0\eps},1\}\mathrm{Bo}.
\end{equation*}
 Analogously, the latter result can be stated with higher regularity. In particular, for initial data in $\mathbb{A}^3_0$ we obtain that $\zeta^{\rm MU}_{\mu}$ is uniformly bounded in $ L^\infty(0,T;\mathbb{A}^3_{\nu t})\cap L^1(0,T;\mathbb{A}^6_{\nu t})$ with respect to $\mu$ for any $T>0$. Therefore, since $\mathbb{A}^3_0 \subset\mathbb{A}^0_0 $, 
  $R^\mu$ defined in \eqref{Rmu} is uniformly bounded in $L^1([0,T])$ with respect to $\mu$ provided that the initial data belong to $\mathbb{A}^3_0$.
 \end{proof}
 We are now able to state the following asymptotic stability theorem for the thin-film equation \eqref{thin-film}: 
 \begin{theorem}\label{conv_theo}
Let $0<\mathrm{Bo} < 1$ and $\zeta_{\mathrm{in},\mu}$ be a zero-mean function in $\mathbb{A}^3_0$ satisfying the smallness assumption \eqref{smallness}. Let $\zeta^{\rm MU}_\mu$  and $\zeta^{\rm app}$ be the global weak solutions respectively to \eqref{heleshaw_dimless} and to \eqref{thin-film} both with initial data $\zeta_{\mathrm{in},\mu}.$ Then, for any $T>0$ there exists $C>0$ independent of $\mu$ such that for $\mu\in(0,1)$
\begin{equation}\label{stability}
	\|\zeta^{\rm MU}_\mu -\zeta^{\rm app}\|_{L^\infty([0,T]; \ \mathbb{A}^0_{\nu t})}+ \tfrac{1}{64}(\tfrac{1}{\rm Bo}-1)	\|\zeta^{\rm MU}_\mu -\zeta^{\rm app}\|_{L^1([0,T]; \ \mathbb{A}^4_{\nu t})}\leq  C\mu.
\end{equation}
 \end{theorem}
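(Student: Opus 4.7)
The plan is to derive a Gronwall-type inequality for the difference $f = \zeta^{\rm MU}_\mu - \zeta^{\rm app}$ in $\mathbb{A}^0_{\nu t}$, using the parabolic structure of \eqref{eq_diff} together with the nonlinear estimates of Propositions \ref{N1N2_prop} and \ref{propNphimu}. Since both solutions start from the same initial data $\zeta_{\mathrm{in},\mu}$, we have $f(0) = 0$, which will ultimately make the final bound proportional to the consistency error $\mu R^\mu$.

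First, I would mimic the energy estimate of Theorem \ref{theoTF}: take the Fourier series of \eqref{eq_diff}, multiply by $\overline{\widehat{f}}/|\widehat{f}|$, take real parts, multiply by $e^{\nu t|n|}$ and sum over $n\in\mathbb{Z}$. Choosing $\nu\in[0, \tfrac{1}{2}(\tfrac{1}{\rm Bo}-1))$ and using the equivalence of norms \eqref{equiv-norms}, this yields
\begin{equation*}
\frac{d}{dt}|f|_{0,\nu t} + \tfrac{1}{32}\Bigl(\tfrac{1}{\rm Bo}-1\Bigr)|f|_{4,\nu t} \leq \tfrac{\eps}{\rm Bo}|N_1|_{0,\nu t} + \eps|N_2|_{0,\nu t} + \tfrac{1}{\eps}|N(\zeta^{\rm MU}_\mu,\widetilde{\phi}^\mu)|_{0,\nu t}.
\end{equation*}
Applying Proposition \ref{N1N2_prop} to the first two terms and Proposition \ref{propNphimu} to the third, I arrive at
\begin{equation*}
\frac{d}{dt}|f|_{0,\nu t} + \tfrac{1}{32}\Bigl(\tfrac{1}{\rm Bo}-1\Bigr)|f|_{4,\nu t} \leq 2\eps\bigl(\tfrac{1}{\rm Bo}+1\bigr)\bigl[|\zeta^{\rm app}|_{0,\nu t}|f|_{4,\nu t} + A(t)|f|_{0,\nu t}\bigr] + \mu R^\mu(t),
\end{equation*}
where $A(t):=2|\zeta^{\rm app}|_{4,\nu t}+|\zeta^{\rm MU}_\mu|_{4,\nu t}$.

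Next, I would absorb the top-order $|f|_{4,\nu t}$ term into the left-hand side. Thanks to the decay estimate \eqref{decay} one has $|\zeta^{\rm app}(t)|_{0,\nu t}\leq|\zeta_{\mathrm{in},\mu}|_{0,0}$, and a sufficient smallness of the initial data ensures $2\eps(\tfrac{1}{\rm Bo}+1)|\zeta^{\rm app}|_{0,\nu t}\leq \tfrac{1}{64}(\tfrac{1}{\rm Bo}-1)$; this is guaranteed by \eqref{smallness} after possibly sharpening it against \eqref{smallnessTF}. What remains is
\begin{equation*}
\frac{d}{dt}|f|_{0,\nu t} + \tfrac{1}{64}\Bigl(\tfrac{1}{\rm Bo}-1\Bigr)|f|_{4,\nu t} \leq 2\eps\bigl(\tfrac{1}{\rm Bo}+1\bigr) A(t)|f|_{0,\nu t} + \mu R^\mu(t),
\end{equation*}
with $f(0)=0$. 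The energy bounds \eqref{energy_estimate} and \eqref{HS_energy_ineq} yield $A\in L^1([0,T])$ with norm uniform in $\mu$, and Proposition \ref{propNphimu} supplies $R^\mu\in L^1([0,T])$ uniformly in $\mu$ provided $\zeta_{\mathrm{in},\mu}\in\mathbb{A}^3_0$. Gronwall's inequality then gives $\|f\|_{L^\infty([0,T];\mathbb{A}^0_{\nu t})}\leq C\mu$, and integrating the differential inequality over $[0,T]$ produces the accompanying $L^1([0,T];\mathbb{A}^4_{\nu t})$ bound, thereby proving \eqref{stability}.

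The main obstacle is the absorption step: one must verify that the smallness conditions assumed on $|\zeta_{\mathrm{in},\mu}|_{1,0}$ in Theorem \ref{theoMU} and on $|\zeta_{\mathrm{in},\mu}|_{0,0}$ in Theorem \ref{theoTF}, which are needed to obtain global existence of both solutions, are jointly strong enough to bring the nonlinear coefficient in front of $|f|_{4,\nu t}$ strictly below $\tfrac{1}{32}(\tfrac{1}{\rm Bo}-1)$. A secondary but essential technical point is the uniform-in-$\mu$ $L^1([0,T];\mathbb{A}^6_{\nu t})$ bound for $\zeta^{\rm MU}_\mu$ required to control $R^\mu$; this higher regularity must be propagated from the hypothesis $\zeta_{\mathrm{in},\mu}\in\mathbb{A}^3_0$ through a regularity-persistence version of Theorem \ref{theoMU}, so that all estimates entering Gronwall truly scale linearly with $\mu$.
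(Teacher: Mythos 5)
Your proposal is correct and follows essentially the same route as the paper's proof: the $\mathbb{A}^0_{\nu t}$ energy estimate for $f=\zeta^{\rm MU}_\mu-\zeta^{\rm app}$, the bounds of Propositions \ref{N1N2_prop} and \ref{propNphimu}, absorption of the $|f|_{4,\nu t}$ term via the smallness of $|\zeta^{\rm app}|_{0,\nu t}$ from Theorem \ref{theoTF}, and Gronwall with $f(0)=0$ together with the uniform-in-$\mu$ $L^1$ bounds on $|\zeta^{\rm app}|_{4,\nu t}$, $|\zeta^{\rm MU}_\mu|_{4,\nu t}$ and $R^\mu$ (the latter via the higher-regularity version of Theorem \ref{theoMU} for $\mathbb{A}^3_0$ data). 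The two technical points you flag are exactly the ones the paper handles by invoking \eqref{energy_estimate} with \eqref{smallnessTF} and by the propagation-of-regularity remark inside Proposition \ref{propNphimu}.
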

\vspace{0.1em}
 \begin{proof}
Analogously to what done in Section \ref{lubri_sec} for \eqref{apriori}, we implement an energy estimate for \eqref{eq_diff} and we get for $f=\zeta^{\rm MU}_\mu -\zeta^{\rm app}$ the following inequality:
 \begin{equation*}\begin{aligned}
\frac{d}{dt} | f|_{0,\nu t} &+\tfrac{1}{32}(\tfrac{1}{\rm Bo}-1)|f|_{4,\nu t}\\[5pt]&\leq \tfrac{\eps}{\rm Bo}|N_1(f,\zeta^{\rm app})|_{0,\nu t} +\eps |N_2(f,\zeta^{\rm app})|_{0,\nu t} + \tfrac{1}{\eps}|N(\zeta^{\rm MU}_\mu, \widetilde{\phi}^\mu)|_{0,\nu t}.
 \end{aligned}
 \label{apriori-diff}
 \end{equation*}
 From \eqref{N1N2_est} and \eqref{Nphimu_est}, it yields
 \begin{equation}\begin{aligned}
 	&\frac{d}{dt} | f|_{0,\nu t} +\tfrac{1}{32}(\tfrac{1}{\rm Bo}-1)|f|_{4,\nu t}\\[5pt]&\leq 2\eps\left(\tfrac{1}{\rm Bo}+ 1 \right)\left(|\zeta^{\rm app}|_{0,\nu t }|f|_{4,\nu t}    + ( 2|\zeta^{\rm app}|_{4,\nu t }+ |\zeta^{\rm MU}_\mu|_{4,\nu t })|f|_{0,\nu t}\right)+ \mu R^\mu
 	\end{aligned}
 \end{equation}
 and using the estimate \eqref{energy_estimate} for $\zeta^{\rm app}$ together with the smallness assumption \eqref{smallnessTF} we get
 \begin{equation*}
 \frac{d}{dt} | f|_{0,\nu t} +\tfrac{1}{64}(\tfrac{1}{\rm Bo}-1)|f|_{4,\nu t}\leq 2\eps\left(\tfrac{1}{\rm Bo}+ 1 \right)\left( 2|\zeta^{\rm app}|_{4,\nu t }+ |\zeta^{\rm MU}_\mu|_{4,\nu t }\right)|f|_{0,\nu t}+ \mu R^\mu
 \end{equation*}
From Theorem \ref{theoTF} and Theorem \ref{theoMU} we respectively know that $\zeta^{\rm app}$ and $(\zeta^{\rm MU}_\mu)_{0<\mu<1}$ are uniformly bounded in $L^1([0,T];\mathbb{A}^4_{\nu t})$. From Proposition \ref{propNphimu} we know that if $\zeta_{\mathrm{in},\mu}\in  \mathbb{A}^3_0$ then $(R^\mu)_\mu$ is uniformly bounded in $L^1([0,T])$. Therefore, using Gronwall's inequality we obtain that for all $0<\mu<1$

\begin{equation*}\begin{aligned}
&	|f(t)|_{0,\nu t}  \ +  \  \tfrac{1}{64}\left(\tfrac{1}{\rm Bo}-1\right)\int_{0}^{t} |f(t')|_{4,\nu t'} dt' \\&\leq\mu\left( \int_{0}^{t} R^\mu(t')dt' \right)e^{ 2\eps \left(\tfrac{\sqrt{\mu}}{\rm bo}+1\right) \int_{0}^{t}\left(2|\zeta^{\rm app}(t')|_{4,\nu t' }+ |\zeta^{\rm MU}_\mu(t')|_{4,\nu t '}\right)dt'}\leq C \mu 
	\end{aligned}
\end{equation*}
for all $t\in[0,T]$ and some constant $C>0$ independent of $\mu$. Notice that in the previous inequality we have used that $f(0)=0$. The error estimate \eqref{stability} easily follows after taking the supremum over $[0,T]$.
 \end{proof}
\begin{remark}
Since $\|f\|_{L^\infty(\mathbb{T})}\leq |f|_{0,\nu t }$, it follows from \eqref{stability} that for all $T>0$ and $\mu\in (0,1)$ we have  $$	\|\zeta^{\rm MU}_\mu -\zeta^{\rm app}\|_{L^\infty([0,T]\times \mathbb{T})}\leq   C\mu.$$
\end{remark}

  \subsection{Justification of the thin film approximation at order $O(\mu^{3/2})$}
  In this subsection, analogously to what done in the previous one, we justify in a rigorous way that the refined thin film equation \eqref{O(mu)} is an approximation at order $O(\mu^{3/2})$ of the one-phase unstable Muskat problem. More precisely, we show that the difference between the solution to \eqref{HSeq_dimless} and the solution to \eqref{O(mu)} is of order $O(\mu^{3/2})$ in the Wiener framework. Let us denote now by $\zeta^{\rm app}_{\rm ref}$ the solution to \eqref{O(mu)} and by $f$ the difference  $\zeta^{\rm MU}_\mu-\zeta^\mathrm{app}_{\rm ref}$. Then $f$ satisfies the following evolution equation:
  \begin{equation}\label{eq_diff_ref}
  	\begin{aligned}
  &	\partial_t f + \left( \tfrac{\sqrt{\mu}}{\rm bo}  +\tfrac{\mu}{3}\right)\partial_{xxxx}f + \partial_{xx}f\\[5pt]&
  = -\tfrac{\eps\sqrt{\mu}}{\rm bo}N_1(f,\zeta^{\rm app}_{\rm ref}) -\eps N_2(f,\zeta^{\rm app}_{\rm ref})  -\eps\mu N_3(f,\zeta^{\rm app}_{\rm ref}) -\tfrac{1}{\eps}N(\zeta^{\rm MU}_\mu, \widetilde{\phi}^\mu_{\rm ref})\\[5pt]
  	\end{aligned}
  \end{equation}
 with $N_1(\cdot,\cdot)$, $N_2(\cdot,\cdot)$, $N(\zeta^{\rm MU}_\mu, \cdot)$ as in \eqref{eq_diff} and 
 \begin{equation*}
 \begin{aligned}
 	&N_3(f,\zeta^{\rm app}_{\rm ref})
 	\\[5pt]&=\partial_{xx}\big[  f\partial_{xx}f + \zeta^{\rm app}_{\rm ref}\partial_{xx}f + f\partial_{xx}\zeta^{\rm app}_{\rm ref} \\[5pt]&\qquad\quad  + \eps\left(f^2\partial_{xx}f + (\zeta^{\rm app}_{\rm ref})^2\partial_{xx}f+ f^2\partial_{xx}\zeta^{\rm app}_{\rm ref} + 2f\zeta^{\rm app}_{\rm ref}\partial_{xx}f +2f\zeta^{\rm app}_{\rm ref}\partial_{xx}\zeta^{\rm app}_{\rm ref}  \right)\\[5pt]& \qquad\quad+ \tfrac{\eps^2}{3}\big(f^3\partial_{xx}f + f^3\partial_{xx}\zeta^{\rm app}_{\rm ref} + 3f^2\zeta^{\rm app}_{\rm ref}\partial_{xx}f + 3f^2 \zeta^{\rm app}_{\rm ref}\partial_{xx}\zeta^{\rm app}_{\rm ref}\\[5pt]&\qquad\qquad\quad  + 3 f (\zeta^{\rm app}_{\rm ref})^2\partial_{xx}f + 3 f (\zeta^{\rm app}_{\rm ref})^2\partial_{xx}\zeta^{\rm app}_{\rm ref} + (\zeta^{\rm app}_{\rm ref})^3\partial_{xx}f \big)  \big].
 	\end{aligned}
 \end{equation*}
 The first two terms in the right hand side of \eqref{eq_diff_ref} can be estimated in the same way as in \eqref{N1N2_est}. Therefore let us focus on $N_3(f,\zeta^{\rm app}_{\rm ref})$ and $N(\zeta^{\rm MU}_\mu, \widetilde{\phi}^\mu_{\rm ref})$, whose estimates in the Wiener framework are given in the next two propositions.
 
 \begin{proposition}Let $N_3(f,\zeta^{\rm app}_{\rm ref})$ be as in \eqref{eq_diff_ref}. The following estimate holds true:
 	\begin{equation}\label{N3_est}
 	\begin{aligned}
 		\eps\mu|N_3(f,\zeta^{\rm app}_{\rm ref})|_{0,\nu t} \leq \mu F_1(\zeta^{\rm app}_{\rm ref},\zeta^{\rm MU}_{\mu} ) |f|_{4,\nu t} +\mu F_2(\zeta^{\rm app}_{\rm ref},\zeta^{\rm MU}_{\mu} )|\zeta^{\rm app}_{\rm ref}|_{4,\nu t} |f|_{0,\nu t}
 			\end{aligned}
 				\end{equation}
 				with 
 					\begin{equation*}
 				\begin{aligned}
 	F_1(\zeta^{\rm app}_{\rm ref},\zeta^{\rm MU}_{\mu} )
 	= C_1\eps(|\zeta^{\rm app}_{\rm ref}|_{0,\nu t }&+ |\zeta^{\rm MU}_{\mu}|_{0,\nu t })\\[5pt]
 	&\times\left( 1+ \eps |\zeta^{\rm app}_{\rm ref}|_{0,\nu t} + \eps^2(|\zeta^{\rm app}_{\rm ref}|^2_{0,\nu t } + |\zeta^{\rm MU}_{\mu}|^2_{0,\nu t }) \right)
 	\end{aligned}
 \end{equation*}
 and 
 
 	\begin{equation*}
 \begin{aligned}
F_2(\zeta^{\rm app}_{\rm ref},\zeta^{\rm MU}_{\mu} )= \ C_2\eps \Big(1&+\eps(|\zeta^{\rm app}_{\rm ref}|_{0,\nu t } + |\zeta^{\rm MU}_{\mu}|_{0,\nu t })  \\[5pt]
&+\eps^2(|\zeta^{\rm app}_{\rm ref}|^2_{0,\nu t } +  |\zeta^{\rm app}_{\rm ref}|_{0,\nu t }|\zeta^{\rm MU}_{\mu}|_{0,\nu t }+ |\zeta^{\rm MU}_{\mu}|^2_{0,\nu t })\Big),\\[5pt]
 \end{aligned}
 \end{equation*} where $C_1$ and $C_2$ are two positive constants independent of $\mu$. 
 \end{proposition}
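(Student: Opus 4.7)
The plan is to reduce the claim to bounds on individual monomials appearing inside the outer $\partial_{xx}$ in the definition of $N_3$. Since $|\partial_{xx}g|_{0,\nu t}\leq |g|_{2,\nu t}$, the task becomes estimating the $\mathbb{A}^2_{\nu t}$-norm of each of the roughly fifteen summands of the form $u_1\cdots u_k\partial_{xx}u_{k+1}$ with $u_j\in\{f,\zeta^{\rm app}_{\rm ref}\}$ and $k\in\{1,2,3\}$, weighted respectively by $1$, $\eps$, and $\eps^2/3$ for the quadratic, cubic, and quartic blocks appearing inside the bracket.

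For each monomial I would apply Proposition \ref{productWie} at level $s=2$ to split the Wiener norm into ``low--high'' contributions, Proposition \ref{powerWie} to handle the integer powers $f^n$ and $(\zeta^{\rm app}_{\rm ref})^n$, and Proposition \ref{interpolation} with $(s_1,s_2,\theta)=(0,4,1/2)$ to bound intermediate norms $|\cdot|_{2,\nu t}$ by the geometric mean of $|\cdot|_{0,\nu t}$ and $|\cdot|_{4,\nu t}$. The mixed products $|u|_{0,\nu t}^{1/2}|u|_{4,\nu t}^{1/2}|v|_{0,\nu t}^{1/2}|v|_{4,\nu t}^{1/2}$ that arise are then split by the Young inequality $ab\leq \tfrac{1}{2}(a^2+b^2)$, pairing factors as $(u_0^{1/2}v_4^{1/2})(u_4^{1/2}v_0^{1/2})$, so that the only level-$4$ norms that survive are either $|f|_{4,\nu t}$ (destined for $F_1$) or $|\zeta^{\rm app}_{\rm ref}|_{4,\nu t}$ paired with $|f|_{0,\nu t}$ (destined for $F_2$). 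Wherever $|f|_{0,\nu t}$ appears as a coefficient of degree higher than one, the triangle inequality $|f|_{0,\nu t}\leq |\zeta^{\rm app}_{\rm ref}|_{0,\nu t}+|\zeta^{\rm MU}_\mu|_{0,\nu t}$ converts it into a polynomial in the two zeta-norms, and the smallness bound $\eps|\zeta^{\rm MU}_\mu|_{0,\nu t}\leq 1$ guaranteed by Theorem \ref{theoMU} is used to absorb ``stray'' higher-degree monomials (e.g.\ $\eps^2|\zeta^{\rm MU}_\mu|_{0,\nu t}^2$ coming from $f^2\partial_{xx}f$) into lower-$\eps$ terms already accounted for by $F_1$.

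Organizing the resulting bounds by powers of $\eps$, the quadratic block of $N_3$ contributes terms of order $\eps$ to both $F_1$ and $F_2$, the cubic block contributes terms of order $\eps^2$, and the quartic block contributes terms of order $\eps^3$; collecting combinatorial constants into $C_1$ and $C_2$ yields exactly the polynomial shape of $F_1$ and $F_2$ stated in the proposition. The main obstacle is purely bookkeeping: the fifteen monomials each split into two or three sub-terms under the product rule, and one must verify that every resulting term can be routed into exactly one of the two buckets $|f|_{4,\nu t}$ or $|\zeta^{\rm app}_{\rm ref}|_{4,\nu t}|f|_{0,\nu t}$ with the correct $\eps$-power and the correct polynomial dependence on $|\zeta^{\rm app}_{\rm ref}|_{0,\nu t}$ and $|\zeta^{\rm MU}_\mu|_{0,\nu t}$. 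No inequality beyond those developed in Section \ref{fun_sec} and already employed in Proposition \ref{N1N2_prop} is needed.
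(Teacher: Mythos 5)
Your proposal is correct and follows essentially the same route as the paper's proof, which likewise reduces $|\partial_{xx}(\cdot)|_{0,\nu t}$ to $|\cdot|_{2,\nu t}$ and then applies Proposition \ref{productWie}, Proposition \ref{powerWie}, Proposition \ref{interpolation}, Young's inequality to route the surviving level-$4$ norms into the two buckets $|f|_{4,\nu t}$ and $|\zeta^{\rm app}_{\rm ref}|_{4,\nu t}|f|_{0,\nu t}$, and the triangle inequality $|f|_{0,\nu t}\leq |\zeta^{\rm app}_{\rm ref}|_{0,\nu t}+|\zeta^{\rm MU}_{\mu}|_{0,\nu t}$. The only deviation is your appeal to the smallness $\eps|\zeta^{\rm MU}_{\mu}|_{0,\nu t}\leq 1$ from Theorem \ref{theoMU}, which the paper does not use and which is dispensable (it would import a hypothesis absent from the statement): monomials such as $\eps^2|\zeta^{\rm MU}_{\mu}|^2_{0,\nu t}$ already appear inside $F_1$, and the remaining cross terms are absorbed unconditionally via $\eps a\leq \tfrac{1}{2}\left(1+\eps^2 a^2\right)$ and $2ab\leq a^2+b^2$.
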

 \begin{proof}
The proof follows exactly the same argument used in the proof of Proposition \ref{N1N2_prop}. For the sake of clarity, we only show here how to estimate some terms of $N_3(f,\zeta^{\rm app}_{\rm ref})$ and the rest of the terms can be estimated in the same way. First, we consider the terms where $f$ interacts with itself.
Using Proposition \ref{productWie}, Proposition \ref{interpolation} and the fact that $f=\zeta^{\rm MU}_\mu - \zeta^{\rm app}_{\rm ref}$ one obtains
\begin{equation*}
	\begin{aligned}
|\partial_{xx}(f\partial_{xx}f)|_{0,\nu t} &\leq |f\partial_{xx}f|_{2,\nu t} \leq K_2(|f|^2_{2,\nu t} + |f|_{0,\nu t}|f|_{4,\nu t})\\[5pt]&\leq 2K_2 |f|_{0,\nu t}|f|_{4,\nu t} \leq 2K_2 \left(|\zeta^{\rm app}_{\rm ref}|_{0,\nu t}+|\zeta^{\rm MU}_\mu|_{0,\nu t}\right)|f|_{4,\nu t}.\\[5pt]
	\end{aligned}
\end{equation*}Moreover, using also Proposition \ref{powerWie} we have 
\begin{equation*}
\begin{aligned}
|\partial_{xx}(f^2\partial_{xx}f)|_{0,\nu t} &\leq |f^2\partial_{xx}f|_{2,\nu t} \leq K_2( |f|^2_{0,\nu t}|f|_{4,\nu t}+ |f^2|_{2,\nu t}|f|_{2,\nu t})\\[5pt]&\leq K_2 \left( |f|^2_{0,\nu t}|f|_{4,\nu t} +K_{2,2}|f|_{0,\nu t} |f|^2_{2,\nu t} \right)\leq K_2(1+K_{2,2})|f|^2_{0,\nu t}|f|_{4,\nu t}\\[5pt]&
\leq K_2(1+K_{2,2}) \left(|\zeta^{\rm app}_{\rm ref}|_{0,\nu t}+|\zeta^{\rm MU}_\mu|_{0,\nu t}\right)^2|f|_{4,\nu t}
\end{aligned}
\end{equation*}
and 
\begin{equation*}
\begin{aligned}
|\partial_{xx}(f^3\partial_{xx}f)|_{0,\nu t} &\leq |f^3\partial_{xx}f|_{2,\nu t} \leq K_2( |f|^3_{0,\nu t}|f|_{4,\nu t}+|f^3|_{2,\nu t}|f||_{2,\nu t})\\[5pt]&\leq K_2 \left( |f|^3_{0,\nu t}|f|_{4,\nu t} +K_{2,3}|f|^2_{0,\nu t} |f|^2_{2,\nu t} \right)\leq K_2(1+K_{2,3})|f|^3_{0,\nu t}|f|_{4,\nu t}\\[5pt]&
\leq K_2(1+K_{2,3}) \left(|\zeta^{\rm app}_{\rm ref}|_{0,\nu t}+|\zeta^{\rm MU}_\mu|_{0,\nu t}\right)^3|f|_{4,\nu t}
\end{aligned}
\end{equation*}
Secondly, we consider two mixed terms. We have

\begin{equation*}
\begin{aligned}
|\partial_{xx}(\zeta^{\rm app}_{\rm ref}\partial_{xx}f &+f\partial_{xx} \zeta^{\rm app}_{\rm ref})|_{0,\nu t} \leq |\zeta^{\rm app}_{\rm ref}\partial_{xx}f|_{2,\nu t} +|f\partial_{xx} \zeta^{\rm app}_{\rm ref}|_{2,\nu t}\\[5pt]& \leq K_2 \left(|\zeta^{\rm app}_{\rm ref}|_{0,\nu t}|f|_{4,\nu t}+ |\zeta^{\rm app}_{\rm ref}|_{4,\nu t}|f|_{0,\nu t} + 2 |\zeta^{\rm app}_{\rm ref}|_{2,\nu t}|f|_{2,\nu t}\right)\\[5pt]& \leq 2K_2\left(\zeta^{\rm app}_{\rm ref}|_{0,\nu t}|f|_{4,\nu t}+ |\zeta^{\rm app}_{\rm ref}|_{4,\nu t}|f|_{0,\nu t} \right)\\[5pt]
\end{aligned}
\end{equation*}
where we have applied Young's inequality to obtain
\begin{equation*}
	\begin{aligned}
|\zeta^{\rm app}_{\rm ref}|_{2,\nu t}|f|_{2,\nu t}&\leq|\zeta^{\rm app}_{\rm ref}|^{1/2}_{0,\nu t}|\zeta^{\rm app}_{\rm ref}|^{1/2}_{4,\nu t}|f|^{1/2}_{0,\nu t}|f|^{1/2}_{4,\nu t}\\[5pt]& \leq \tfrac{1}{2}|\zeta^{\rm app}_{\rm ref}|_{4,\nu t}|f|_{0,\nu t} +\tfrac{1}{2}|\zeta^{\rm app}_{\rm ref}|_{0,\nu t}|f|_{4,\nu t}.
	\end{aligned}
\end{equation*} Using the same inequality we are able to estimate all the mixed terms. For instance, we get

\begin{equation*}
\begin{aligned}
|&\partial_{xx}((\zeta^{\rm app}_{\rm ref})^2\partial_{xx}f +f^2\partial_{xx} \zeta^{\rm app}_{\rm ref})|_{0,\nu t} \leq |(\zeta^{\rm app}_{\rm ref})^2\partial_{xx}f|_{2,\nu t} +|f^2\partial_{xx} \zeta^{\rm app}_{\rm ref}|_{2,\nu t}\\[5pt]& \leq K_2 \left(|\zeta^{\rm app}_{\rm ref}|^2_{0,\nu t}|f|_{4,\nu t}+ |\zeta^{\rm app}_{\rm ref}|_{4,\nu t}|f|^2_{0,\nu t} + K_{2,2} |\zeta^{\rm app}_{\rm ref}|_{2,\nu t}|f|_{2,\nu t}(|f|_{0,\nu t}+|\zeta^{\rm app}_{\rm ref}|_{0,\nu t})\right)
\\[5pt]& \leq \left((K_2 +\tfrac{K_2K_{2,2}}{2})|\zeta^{\rm app}_{\rm ref}|_{0,\nu t} + \tfrac{K_2K_{2,2}}{2}|f|_{0,\nu t}\right)|\zeta^{\rm app}_{\rm ref}|_{0,\nu t }|f|_{4,\nu t}\\[5pt] &
\quad +  \left((K_2 +\tfrac{K_2K_{2,2}}{2})|f|_{0,\nu t} + \tfrac{K_2K_{2,2}}{2}|\zeta^{\rm app}_{\rm ref}|_{0,\nu t}\right)|\zeta^{\rm app}_{\rm ref}|_{4,\nu t }|f|_{0,\nu t}
\\[5pt]&\leq  \left(K_2 (1+K_{2,2})|\zeta^{\rm app}_{\rm ref}|^2_{0,\nu t} + \tfrac{K_2K_{2,2}}{2}|\zeta^{\rm app}_{\rm ref}|_{0,\nu t}|\zeta^{\rm MU}_\mu|_{0,\nu t}\right)||f|_{4,\nu t}\\[5pt] &
\quad + \left(K_2 (1+K_{2,2})|\zeta^{\rm app}_{\rm ref}|_{0,\nu t} + \tfrac{K_2K_{2,2}}{2}|\zeta^{\rm MU}_\mu|_{0,\nu t}\right)||\zeta^{\rm app}_{\rm ref}|_{4,\nu t}|f|_{0,\nu t}.
\end{aligned}
\end{equation*}
Therefore, gathering together the estimates for all the terms of $N_3(f,\zeta^{\rm app}_{\rm ref})$ and computing the explicit constants $K_2$, $K_{2,2}$ and $K_{2,3}$, the estimate \eqref{N3_est} follows.
 \end{proof}

 \begin{proposition}\label{propNphimu_ref}
 Let $\mu<1$ and $N(\zeta^{\rm MU}_\mu, \widetilde{\phi}^\mu_{\rm ref})$ be as in \eqref{eq_diff_ref} with $\widetilde{\phi}^\mu_{\rm ref}$ solution to \eqref{laplace_phimu_ref} and $\zeta^{\rm MU}_\mu$ solution to \eqref{HSeq_dimless}. Then, the following inequality holds:
 \begin{equation}\label{Nphimu_ref_est}
 \tfrac{1}{\eps}|N(\zeta^{\rm MU}_\mu, \widetilde{\phi}^\mu_{\rm ref})|_{0,\nu t}\leq \mu^{3/2}  R^\mu_{\rm ref},
 \end{equation}
 for some time-dependent function $R^\mu_{\rm ref}$. Moreover,  if the initial data $\zeta^{\rm MU}_{\mathrm{in},\mu}$  belongs to $\mathbb{A}^{3}_0$ then $ R^\mu$ is uniformly bounded  in $L^1([0,T])$ with respect of $\mu$ for any $T>0$.
\end{proposition}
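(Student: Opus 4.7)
The plan is to follow the same blueprint as the proof of Proposition \ref{propNphimu}, but replacing the elliptic estimate from Proposition \ref{ell_est} by the sharper refined estimate from Proposition \ref{ell_est_ref}, and adjusting the boundary-trace contribution to reflect the refined Dirichlet data in \eqref{laplace_phimu_ref}. The starting point is the same splitting
\[
|N(\zeta^{\rm MU}_\mu,\widetilde{\phi}^\mu_{\rm ref})|_{0,\nu t}\leq \left|\partial_x\Bigl((1+\eps\zeta^{\rm MU}_\mu)\int_{-1}^0\partial_x\widetilde{\phi}^\mu_{\rm ref}\,dz\Bigr)\right|_{0,\nu t}+\left|\eps\partial_x\zeta^{\rm MU}_\mu\int_{-1}^0(z+1)\partial_z\widetilde{\phi}^\mu_{\rm ref}\,dz\right|_{1,\nu t},
\]
to which I would apply Proposition \ref{productWie} and Lemma \ref{poincare} exactly as in \eqref{estNmu_poincare} to bring the dimensionless gradient of $\widetilde{\phi}^\mu_{\rm ref}$ into play without producing the singular prefactor $1/\sqrt{\mu}$. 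This yields the analogue of \eqref{estNmu_poincare}, namely a bound in terms of $\|\nabla^\mu\widetilde{\phi}^\mu_{\rm ref}\|_{\mathcal{A}^{i,0}_{\nu t}}$ for $i=0,1,2$, and of Wiener norms of the Dirichlet trace $\widetilde{\phi}^\mu_{{\rm ref}\,|_{z=0}}$.

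Next, I would plug in Proposition \ref{ell_est_ref}, which provides $\|\nabla^\mu\widetilde{\phi}^\mu_{\rm ref}\|_{\mathcal{A}^{s,0}_{\nu t}}\leq C_s\eps\mu^{3/2}(\tfrac{1}{\rm bo}+1)|\zeta^{\rm MU}_\mu|_{s+4,\nu t}$ for the choices $s=0,1,2$, under the smallness condition \eqref{assump-zeta1ref} for $|\zeta^{\rm MU}_\mu|_{1,\nu t}$ (which is guaranteed by \eqref{smallness} of Theorem \ref{theoMU}). Simultaneously, from the boundary condition of \eqref{laplace_phimu_ref},
\[
\widetilde{\phi}^\mu_{{\rm ref}\,|_{z=0}}=\tfrac{\eps\sqrt{\mu}}{\rm bo}F_{\eps\sqrt{\mu}}(\partial_x\zeta^{\rm MU}_\mu)\,\partial_{xx}\zeta^{\rm MU}_\mu,
\]
combined with Proposition \ref{prop_Fepsmu} (which gives $|F_{\eps\sqrt\mu}(\partial_x\zeta^{\rm MU}_\mu)|_{s,\nu t}\leq \eps\mu|\zeta^{\rm MU}_\mu|_{s+1,\nu t}$) and Proposition \ref{productWie}, produces the bound $|\widetilde{\phi}^\mu_{{\rm ref}\,|_{z=0}}|_{s+1,\nu t}\lesssim \eps^2\tfrac{\mu^{3/2}}{\rm bo}|\zeta^{\rm MU}_\mu|_{0,\nu t}|\zeta^{\rm MU}_\mu|_{s+4,\nu t}$ for $s=0,1$, i.e.\ also of size $\mu^{3/2}$.

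Collecting these pieces, both the bulk and the trace contributions scale as $\mu^{3/2}$, and one arrives at
\[
\tfrac{1}{\eps}|N(\zeta^{\rm MU}_\mu,\widetilde{\phi}^\mu_{\rm ref})|_{0,\nu t}\leq \mu^{3/2}R^\mu_{\rm ref},
\]
where the time-dependent remainder $R^\mu_{\rm ref}$ is, in analogy with \eqref{Rmu},
\[
R^\mu_{\rm ref}=\widetilde C\bigl(1+\tfrac{1}{\rm bo}\bigr)\bigl(1+\eps|\zeta^{\rm MU}_\mu|_{0,\nu t}+\eps^2|\zeta^{\rm MU}_\mu|^2_{0,\nu t}\bigr)|\zeta^{\rm MU}_\mu|_{6,\nu t},
\]
for a constant $\widetilde C>0$ independent of $\mu$.

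Finally, to verify the uniform $L^1([0,T])$-bound on $R^\mu_{\rm ref}$, I would invoke Theorem \ref{theoMU} upgraded to initial data $\zeta^{\rm MU}_{\mathrm{in},\mu}\in\mathbb{A}^3_0$, which yields uniform boundedness (with respect to $\mu$) of $\zeta^{\rm MU}_\mu$ in $L^\infty(0,T;\mathbb{A}^3_{\nu t})\cap L^1(0,T;\mathbb{A}^6_{\nu t})$, exactly as observed at the end of the proof of Proposition \ref{propNphimu}. The main technical point, as in the unrefined case, is not an obstacle per se but a bookkeeping issue: one must ensure that the Poincaré-type trick of Lemma \ref{poincare} is applied with exactly the right regularity indices so that the elliptic estimate \eqref{ellipticestimate_s_ref} can be invoked at $s=2$ and no factor $1/\sqrt{\mu}$ appears; this forces the need for $\mathbb{A}^6_{\nu t}$-control of $\zeta^{\rm MU}_\mu$, which explains the $\mathbb{A}^3_0$ regularity assumption on the initial datum.
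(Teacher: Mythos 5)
Your proposal is correct and follows essentially the same route as the paper: the paper's proof also reuses the splitting and Poincaré-type estimate from Proposition \ref{propNphimu}, invokes the refined elliptic estimate of Proposition \ref{ell_est_ref} together with the refined Dirichlet trace and Proposition \ref{prop_Fepsmu} to extract the factor $\mu^{3/2}$, and defines $R^\mu_{\rm ref}$ as a constant times $(1+\tfrac{1}{\rm bo})$ times a low-order polynomial in $\eps|\zeta^{\rm MU}_\mu|_{0,\nu t}$ times $|\zeta^{\rm MU}_\mu|_{6,\nu t}$, with the uniform $L^1([0,T])$ bound coming from the higher-regularity ($\mathbb{A}^3_0$) version of Theorem \ref{theoMU}. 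No gaps beyond routine bookkeeping.
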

\begin{proof}
The proof is essentially the same as the proof of Proposition \ref{propNphimu}. We have 
\begin{equation}\label{estNmuref_poincare}
\begin{aligned}
&|N(\zeta^{\rm MU}_\mu, \widetilde{\phi}^\mu_{\rm ref})|_{0,\nu t}\leq \ (1+\eps |\zeta^{\rm MU}_{\mu}|_{0,\nu t})(\|\nabla^\mu \widetilde{\phi}^\mu_{\rm ref}\|_{\mathcal{A}^{2,0}_{\nu t}}+ |(\widetilde{\phi}^\mu_{\rm ref})_{|_{z=0}}|_{2,\nu t}) \\[2pt]&+ \eps |\zeta^{\rm MU}_{\mu}|_{1,\lambda}(2\|\nabla^\mu \widetilde{\phi}^\mu_{\rm ref}\|_{\mathcal{A}^{1,0}_{\nu t}}+ |(\widetilde{\phi}^\mu_{\rm ref})_{|_{z=0}}|_{1,\nu t})+\eps |\zeta^{\rm MU}_{\mu}|_{2,\nu t} \|\nabla^\mu\widetilde{\phi}^\mu_{\rm ref}\|_{\mathcal{A}^{0,0}_{\nu t}}
\end{aligned}
\end{equation}
with $\widetilde{\phi}^\mu_{\rm ref}$ solution to \eqref{laplace_phimu_ref}. Using Proposition \ref{productWie}, Proposition \ref{interpolation} and Proposition \ref{ell_est_ref}, there exist $K>0$ independent of $\mu$ and a degree-two polynomial $P(\cdot)$ such that 
\begin{equation*}\begin{aligned}
|N(\zeta^{\rm MU}_\mu, \widetilde{\phi}^\mu_{\rm ref})|_{0,\nu t}
\leq & K \eps \mu^{3/2} \left(1+\tfrac{1}{\rm bo}\right)\left(1 + P(\eps |\zeta^{\rm MU}_{\mu}|_{0,\nu t}) \right)|\zeta^{\rm MU}_{\mu}|_{6,\nu t},\\[5pt]
\end{aligned}
\end{equation*}
and, by defining 
\begin{equation*}
R^\mu_{\rm ref}=K\left(1+\tfrac{1}{\rm bo}\right)\left(1+ P(\eps|\zeta^{\rm MU}_{\mu}|_{0,\nu t}) \right)|\zeta^{\rm MU}_{\mu}|_{6,\nu t},\\[2pt]
\end{equation*} the desired estimate \eqref{Nphimu_ref_est} follows. From the higher regularity version of Theorem \ref{theoMU}, $R^\mu_\mathrm{ref}$ is uniformly bounded in $L^1([0,T])$ with respect to $\mu$ for any $T>0$ provided the initial data are in $\mathbb{A}^3_0$.
\end{proof}
We are now able to state the following asymptotic stability theorem for the refined thin-film equation \eqref{O(mu)}:
\begin{theorem}\label{conv_ref_theo}
	Let $\mathrm{bo>0}$ small enough and $\zeta_{\mathrm{in},\mu}$ be a zero-mean function in $\mathbb{A}^3_0$ satisfying the smallness assumption \eqref{smallness}. Let $\zeta^{\rm MU}_\mu$  and $\zeta^{\rm app}_{\rm ref}$ be the global weak solutions respectively to \eqref{heleshaw_dimless} and to \eqref{O(mu)} both with initial data $\zeta_{\mathrm{in},\mu}.$ Then, for any $T>0$ there exist some constant $C>0$ independent of $\mu$ and $\mu_0\in (0,1)$ such that for $\mu\in [ \mu_0,1)$
	\begin{equation}\label{stability_ref}
	\|\zeta^{\rm MU}_\mu -\zeta^{\rm app}_{\rm ref}\|_{L^\infty([0,T]; \ \mathbb{A}^0_{\nu t})}+ \tfrac{1}{64}(\tfrac{\sqrt{\mu}}{\rm bo}+\tfrac{\mu}{3}-1)	\|\zeta^{\rm MU}_\mu -\zeta^{\rm app}_{\rm ref}\|_{L^1([0,T]; \ \mathbb{A}^4_{\nu t})}\leq  C\mu^{3/2}.
	\end{equation}
\end{theorem}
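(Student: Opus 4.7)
The plan is to mimic the argument used for Theorem \ref{conv_theo}, applied now to the evolution equation \eqref{eq_diff_ref} satisfied by the difference $f = \zeta^{\rm MU}_\mu - \zeta^{\rm app}_{\rm ref}$, but taking care of the additional term $\eps\mu N_3(f,\zeta^{\rm app}_{\rm ref})$ that comes from the higher-order truncation and of the modified linear symbol $(\tfrac{\sqrt{\mu}}{\rm bo}+\tfrac{\mu}{3})|n|^4-|n|^2$. First I would take the Fourier series of \eqref{eq_diff_ref}, multiply each mode by $\overline{\widehat{f}(n)}/|\widehat{f}(n)|$, take the real part, multiply by $(1+|n|)^0 e^{\nu t|n|}=e^{\nu t|n|}$, and sum over $n\in\mathbb{Z}$. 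Choosing $\nu\in[0,\tfrac{1}{2}(\tfrac{\sqrt{\mu}}{\rm bo}+\tfrac{\mu}{3}-1))$ exactly as in the proof of Theorem \ref{theoO(mu)} and using \eqref{equiv-norms} yields
\begin{equation*}
\frac{d}{dt}|f|_{0,\nu t} + \tfrac{1}{32}\bigl(\tfrac{\sqrt{\mu}}{\rm bo}+\tfrac{\mu}{3}-1\bigr)|f|_{4,\nu t} \leq \tfrac{\eps\sqrt{\mu}}{\rm bo}|N_1|_{0,\nu t} + \eps|N_2|_{0,\nu t} + \eps\mu|N_3|_{0,\nu t} + \tfrac{1}{\eps}|N(\zeta^{\rm MU}_\mu,\widetilde{\phi}^\mu_{\rm ref})|_{0,\nu t}.
\end{equation*}

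Next I would plug in the three nonlinear estimates already at our disposal: \eqref{N1N2_est} (with $\tfrac{1}{\rm Bo}$ replaced by $\tfrac{\sqrt{\mu}}{\rm bo}$, obtained by the same computation), \eqref{N3_est} for the cubic/quartic terms, and \eqref{Nphimu_ref_est} for the remainder contribution. Theorem \ref{theoO(mu)} provides the uniform bounds $|\zeta^{\rm app}_{\rm ref}(t)|_{0,\nu t}\leq |\zeta_{\mathrm{in},\mu}|_{0,0}$ and $\int_0^T|\zeta^{\rm app}_{\rm ref}(t')|_{4,\nu t'}dt' \leq C$, while Theorem \ref{theoMU} provides the analogous uniform control of $\zeta^{\rm MU}_\mu$ in $L^\infty(0,T;\mathbb{A}^1)\cap L^1(0,T;\mathbb{A}^4)$. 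Consequently the coefficients of $|f|_{4,\nu t}$ coming from $N_1,N_2,N_3$ are each bounded by a constant times $(|\zeta^{\rm app}_{\rm ref}|_{0,\nu t}+|\zeta^{\rm MU}_\mu|_{0,\nu t})$, which by the smallness assumption \eqref{smallnessTF_ref} and \eqref{smallness} can be absorbed into the left-hand side provided $\mathrm{bo}$ is taken sufficiently small and $\mu$ stays away from $0$. This leaves an estimate of the form
\begin{equation*}
\frac{d}{dt}|f|_{0,\nu t} + \tfrac{1}{64}\bigl(\tfrac{\sqrt{\mu}}{\rm bo}+\tfrac{\mu}{3}-1\bigr)|f|_{4,\nu t} \leq G(t)|f|_{0,\nu t} + \mu^{3/2} R^\mu_{\rm ref},
\end{equation*}
where $G\in L^1([0,T])$ uniformly in $\mu$ (from the $L^1_t \mathbb{A}^4$ bounds on $\zeta^{\rm app}_{\rm ref}$ and $\zeta^{\rm MU}_\mu$) and $R^\mu_{\rm ref}\in L^1([0,T])$ uniformly in $\mu$ thanks to the $\mathbb{A}^3_0$ assumption on the initial data and the higher-regularity version of Theorem \ref{theoMU}. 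Since $f(0)=0$, Gronwall's inequality yields \eqref{stability_ref}.

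The main obstacle, and the reason $\mu$ must stay above some $\mu_0\in(0,1)$, is the linear parabolic coefficient $\tfrac{\sqrt{\mu}}{\rm bo}+\tfrac{\mu}{3}-1$: in the unstable configuration this quantity is positive only when $\mu$ is large enough (depending on $\rm bo$), and the argument absolutely requires the dissipation it provides both to absorb the $|f|_{4,\nu t}$ terms produced by $N_1,N_2,N_3$ (after using interpolation and the smallness of $|\zeta^{\rm app}_{\rm ref}|_{0,\nu t}, |\zeta^{\rm MU}_\mu|_{0,\nu t}$) and to guarantee the existence theorem for the refined approximation in the first place. Choosing $\mathrm{bo}$ small first, then $\mu_0$ such that $\tfrac{\sqrt{\mu_0}}{\rm bo}+\tfrac{\mu_0}{3}-1$ is bounded below by a positive constant, and finally verifying compatibility with the two smallness conditions \eqref{smallness} and \eqref{smallnessTF_ref} on the common initial datum $\zeta_{\mathrm{in},\mu}$, is the delicate bookkeeping step but involves no new ideas beyond those of Theorem \ref{conv_theo}.
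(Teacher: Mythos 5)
Your proposal follows essentially the same route as the paper's proof: the Wiener-space energy estimate for \eqref{eq_diff_ref}, insertion of \eqref{N1N2_est} (with $\tfrac{1}{\rm Bo}$ replaced by $\tfrac{\sqrt{\mu}}{\rm bo}$), \eqref{N3_est} and \eqref{Nphimu_ref_est}, absorption of the $|f|_{4,\nu t}$ contributions via the smallness assumptions and the uniform bounds from Theorems \ref{theoO(mu)} and \ref{theoMU}, the lower bound $\mu_0$ coming from positivity of $\tfrac{\sqrt{\mu}}{\rm bo}+\tfrac{\mu}{3}-1$, and Gronwall with $f(0)=0$. No substantive differences; the argument is correct as sketched.
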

\vspace{0.1em}
\begin{remark}
Here we consider the adapted version of the smallness condition \eqref{smallness} and of the entire Theorem \ref{theoMU} according to the new regime considered, where $\frac{1}{\rm Bo}=\frac{\sqrt{\mu}}{\rm bo}.$
\end{remark}

\begin{proof}
	Analogously to the proof of Theorem \ref{conv_theo}, we implement an energy estimate for \eqref{eq_diff_ref} and we get for $f=\zeta^{\rm MU}_\mu -\zeta^{\rm app}_{\rm ref}$ the following inequality:
	\begin{equation*}\begin{aligned}
	 \frac{d}{dt} | f|_{0,\nu t} +\tfrac{1}{32}(\tfrac{\sqrt{\mu}}{\rm bo}+\tfrac{\mu}{3}-1)|f|_{4,\nu t}\leq & \ \tfrac{\eps\sqrt{\mu}}{\rm bo}|N_1(f,\zeta^{\rm app}_{\rm ref})|_{0,\nu t} +\eps |N_2(f,\zeta^{\rm app}_{\rm ref})|_{0,\nu t} \\[5pt]+&\eps\mu |N_3(f,\zeta^{\rm app}_{\rm ref})|_{0,\nu t} +  \tfrac{1}{\eps}|N(\zeta^{\rm MU}_\mu, \widetilde{\phi}^\mu_{\rm ref})|_{0,\nu t}.
	\end{aligned}
	\end{equation*}
	From \eqref{N1N2_est}, \eqref{N3_est} and \eqref{Nphimu_ref_est}, it yields
	\begin{equation}\begin{aligned}
	\frac{d}{dt} | f|_{0,\nu t} +&\tfrac{1}{32}(\tfrac{\sqrt{\mu}}{\rm bo}+\tfrac{\mu}{3}-1)|f|_{4,\nu t}\\[5pt]&\leq  R_1(\zeta^{\rm app}_{\rm ref},\zeta^{\rm MU}_\mu)|f|_{4,\nu t} + R_2(\zeta^{\rm app}_{\rm ref},\zeta^{\rm MU}_\mu)|f|_{0,\nu t},\end{aligned}
	\end{equation} with
	\begin{align*}
		R_1(\zeta^{\rm app}_{\rm ref},\zeta^{\rm MU}_\mu)= 	2\eps\left(\tfrac{\sqrt{\mu}}{\rm bo}+ 1 \right)|\zeta^{\rm app}_{\rm ref}|_{0,\nu t } + \mu F_1(\zeta^{\rm app}_{\rm ref},\zeta^{\rm MU}_\mu) 
	\end{align*}
	and 
	\begin{align*}
			R_2(\zeta^{\rm app}_{\rm ref},\zeta^{\rm MU}_\mu)= & \left(4\eps\left(\tfrac{\sqrt{\mu}}{\rm bo}+ 1 \right) + \mu F_2(\zeta^{\rm app}_{\rm ref},\zeta^{\rm MU}_\mu)\right)|\zeta^{\rm app}|_{4,\nu t }\\[5pt]&+ 2\eps\left(\tfrac{\sqrt{\mu}}{\rm bo}+ 1 \right)|\zeta^{\rm MU}_\mu|_{4,\nu t }.
	\end{align*}
One can observe that the required smallness assumption \eqref{smallness} for the initial data implies the smallness assumption \eqref{smallnessTF_ref}. Therefore we can use 
both estimates \eqref{energy_estimate_ref} for $\zeta^{\rm app}_{\rm ref}$ and  \eqref{HS_energy_ineq} for $\zeta^{\rm MU}_\mu$, the fact that $\mu<1$ and $\rm bo <1$ in order to obtain that
\begin{align*}
R_1(\zeta^{\rm app}_{\rm ref},\zeta^{\rm MU}_\mu)\leq \tfrac{1}{64}(\tfrac{\sqrt{\mu}}{\rm bo}+\tfrac{\mu}{3}-1),
\end{align*} which gives 
	\begin{equation*}\begin{aligned}
	\frac{d}{dt} | f|_{0,\nu t} +\tfrac{1}{64}(\tfrac{\sqrt{\mu}}{\rm bo}+\tfrac{\mu}{3}-1)|f|_{4,\nu t}\leq R_2(\zeta^{\rm app}_{\rm ref},\zeta^{\rm MU}_\mu)|f|_{0,\nu t}+ \mu^{3/2} R^\mu_{\rm ref}
	\end{aligned}
	\end{equation*}
 From Theorem \ref{theoTF} and Theorem \ref{theoMU} we know that both  $\zeta^{\rm app}_{\rm ref}$ and $(\zeta^{\rm MU}_\mu)_{0<\mu<1}$ are uniformly bounded in $L^\infty([0,T];\mathbb{A}^0_{\nu t})\cap L^1([0,T];\mathbb{A}^4_{\nu t})$, which implies that $R_2(\zeta^{\rm app}_{\rm ref},\zeta^{\rm MU}_\mu)$ is uniformly bounded in $L^1([0,T];\mathbb{A}^4_{\nu t})$. Moreover, from Proposition \ref{propNphimu_ref}  we know that if $\zeta_{\mathrm{in},\mu}\in  \mathbb{A}^3_0$ then  $R^\mu_{\rm ref}$ is uniformly bounded in $L^1([0,T])$. We look for a convergence result for small values of the parameter $\mu$ but at the same time we need to guarantee that the condition $\tfrac{\sqrt{\mu}}{\rm bo}+\tfrac{\mu}{3}-1>0$ holds true. This implies that the values of $\mu$ cannot be taken arbitrarily small. Indeed, denoting $y=\sqrt{\mu}>0$, the previous condition reads
	$$y^2+\tfrac{3}{\rm bo}y -3>0,$$
	whose solution is the interval $(y^\ast,+\infty)$ for some $y^\ast\in (0,1)$ if $\mathrm{bo}$ is small enough. Let us hence define $\mu_0:=(y^\ast +\eps)^2$ for $\varepsilon>0$ arbitrarily small. Therefore, using Gronwall's inequality we obtain  for $\mu\in[\mu_0,1)$
	\begin{equation*}\begin{aligned}
		|f(t)|_{0,\nu t}  \ +  \  &\tfrac{1}{64}\left(\tfrac{\sqrt{\mu}}{\rm bo}+\tfrac{\mu}{3}-1\right)\int_{0}^{t} |f(t')|_{4,\nu t'} dt' \\[5pt]&\leq\mu^{3/2}\left( \int_{0}^{t} R^\mu_{\rm ref}(t')dt' \right)e^{ \int_{0}^{t}R_2(\zeta^{\rm app}_{\rm ref},\zeta^{\rm MU}_\mu) (t')dt'}\leq C \mu^{3/2} 
	\end{aligned}
	\end{equation*}
	for all $t\in[0,T]$ and some constant $C>0$ independent of $\mu$. Notice that in the previous inequality we have used that $f(0)=0$. The error estimate \eqref{stability_ref} easily follows after taking the supremum over $[0,T]$.
\end{proof}
\begin{remark}
	Since $\|f\|_{L^\infty(\mathbb{T})}\leq |f|_{0,\nu t }$, it follows from \eqref{stability} that for all $T>0$ and $\mu\in [\mu_0,1)$ we have  $$	\|\zeta^{\rm MU}_\mu -\zeta^{\rm app}_{\rm ref}\|_{L^\infty([0,T]\times \mathbb{T})}\leq   C\mu^{3/2}.$$
\end{remark}
 \vspace{0.5em}
 	\subsubsection{\textbf{The stable case}}
 	 
 	As we have previously seen, the rigorous justification of the thin film approximation at order $O(\mu^{3/2})$ in the unstable case is possible for small values of $\mu$ that are bounded from below and cannot be taken arbitrarily small as one would expect. This impossibility is due to the unstable configuration we are considering. More precisely, it is due to the sign of the gravity term in \eqref{O(mu)} that is responsible for the appearance of $-1$ in the coefficient of the fourth-order norm in the left hand side of \eqref{apriori_ref}. Then, we ask for the coefficient to be strictly positive in order to use the parabolic structure of the equation. This condition forces to impose a lower bound on the parameter $\mu$.  We now consider the refined thin film approximation in the stable configuration where the sign of the gravity terms in \eqref{O(mu)} changes, namely 
 	 		\begin{equation}\label{O(mu)stable}
 	 	\begin{aligned}
 	 	\partial_t  \zeta + \partial_x\left((1+\eps \zeta)(-\partial_x \zeta + \tfrac{\sqrt{\mu}}{\mathrm{bo}}\partial_{xxx} \zeta)\right)  - \tfrac{\mu}{3}\partial_{xx}\left((1+\eps\zeta)^3 \partial_{xx}\zeta\right)=0. 
 	 	\end{aligned}
 	 	\end{equation}
 	 Analogously to the proof of Theorem \ref{theoO(mu)}, we can write \eqref{O(mu)stable} in Fourier as 
 	 \begin{equation*}
 	 \partial_t \widehat{\zeta} + \left(\tfrac{\sqrt{\mu}}{\mathrm{bo}} -\tfrac{\mu}{3}\right)|n|^4 \widehat{\zeta} + |n|^2 \widehat{\zeta} = \widehat{\rm NL_2(\zeta)} + \widehat{\rm NL_3(\zeta)} +\widehat{\rm NL_4(\zeta)}
 	 \end{equation*} for slightly different nonlinear terms $\mathrm{NL}_2(\zeta)$, $\mathrm{NL}_3(\zeta)$, $\mathrm{NL}_4(\zeta)$ and it yields
 	 \begin{equation*}
 		\partial_t |\widehat{\zeta}| + \left(\tfrac{\sqrt{\mu}}{\mathrm{bo}} -\tfrac{\mu}{3}\right)|n|^4 |\widehat{\zeta}|  +|n|^2 |\widehat{\zeta}| \leq |\widehat{\rm NL_2(\zeta)} |+ |\widehat{\rm NL_3(\zeta)}| +|\widehat{\rm NL_4(\zeta)}|.\\[5pt]
 	 \end{equation*} Taking advantage of the positive sign of the gravity term 
$|n|^2 |\widehat{\zeta}|$ and
 choosing $$\nu\in \left[0,\tfrac{1}{2}\left(\tfrac{\sqrt{\mu}}{\mathrm{bo}} -\tfrac{\mu}{3}\right)\right)$$ we get
 \begin{equation*}
 \frac{d}{dt} | \zeta|_{0,\nu t} +\tfrac{1}{32}\left(\tfrac{\sqrt{\mu}}{\mathrm{bo}} -\tfrac{\mu}{3}\right)|\zeta|_{4,\nu t}\leq |\rm NL_2(\zeta)|_{0,\nu t}+ |\rm NL_3(\zeta)|_{0,\nu t} + |\rm NL_4(\zeta)|_{0,\nu t}.
 \end{equation*}Then, by using the same argument of the proof of Theorem \ref{conv_ref_theo}, we get the following existence result:

 \begin{theorem}\label{theoO(mu)stable}
 	Let $\mathrm{bo}>0$, $0<\mu<1$ such that $\tfrac{\sqrt{\mu}}{\mathrm{bo}} -\tfrac{\mu}{3}>0$ and $\zeta_0\in \wiener[0]_0(\mathbb{T})$ be a zero-mean function such that 
 	\begin{equation}\label{smallnessTF_ref_stable}
 	|\zeta_0|_{0,0}< \tfrac{\tfrac{\sqrt{\mu}}{\mathrm{bo}} -\tfrac{\mu}{3}}{128\eps \left(\tfrac{\sqrt{\mu}}{\mathrm{bo}}+\tfrac{55}{6}\mu+1\right)}.\end{equation} Then there exists a global weak solution to \eqref{O(mu)stable} in the sense of Definition \ref{weakdefO(mu)} with initial data $\zeta_0$ which becomes instantaneously analytic in a growing strip in the complex plane. In particular, for $$\nu\in \left[0,\tfrac{1}{2}\left(\tfrac{\sqrt{\mu}}{\mathrm{bo}} -\tfrac{\mu}{3}\right)\right),$$ we have  $$\zeta\in L^\infty([0,T];\wiener[0]_{\nu t})\cap L^1([0,T];\wiener[4]_{\nu t})$$
 	for all $T>0$ and the solution $\zeta$ satisfies for any $t\in[0,T]$ the energy inequality
 	\begin{equation}\label{energy_estimate_ref_stable}
 	|\zeta(t)|_{0,\nu t} + \tfrac{1}{64} \left(\tfrac{\sqrt{\mu}}{\mathrm{bo}} -\tfrac{\mu}{3}\right)\int_0^t|\zeta(t')|_{4,\nu t'}dt'\leq	|\zeta_0|_{0,0}
 	\end{equation} together with the decay
 	\begin{equation}\label{decay_ref_stable}
 	|\zeta(t)|_{0,\nu t} \leq 	|\zeta_0|_{0,0} \ e^{-\frac{t}{64}\left(\tfrac{\sqrt{\mu}}{\mathrm{bo}} -\tfrac{\mu}{3}\right)}.
 	\end{equation}
 \end{theorem}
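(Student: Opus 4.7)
The plan is to mirror the strategy used in the proof of Theorem \ref{theoO(mu)}, deriving an a priori energy estimate in the Wiener space $\mathbb{A}^0_{\nu t}$ and then invoking a standard approximation argument (which we omit as in the unstable case) to upgrade it to a global weak solution. The conceptual point is that the stable sign of the gravity term \emph{adds} parabolicity at the $|n|^2$ level rather than subtracting at the $|n|^4$ level, so the same scheme works but the dissipative structure is cleaner.

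First, I take the Fourier transform of \eqref{O(mu)stable}. The linear part now reads
\begin{equation*}
\partial_t \widehat{\zeta} + \left(\tfrac{\sqrt{\mu}}{\mathrm{bo}} - \tfrac{\mu}{3}\right)|n|^4 \widehat{\zeta} + |n|^2 \widehat{\zeta} = \widehat{\mathrm{NL}_2(\zeta)} + \widehat{\mathrm{NL}_3(\zeta)} + \widehat{\mathrm{NL}_4(\zeta)},
\end{equation*}
where the $\mathrm{NL}_j(\zeta)$ have exactly the same algebraic structure as in the unstable case (with minor sign changes in the coefficients) and therefore satisfy the same Wiener estimates \eqref{NL2_est}--\eqref{NL4_est} up to absolute constants. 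Multiplying by $\overline{\widehat{\zeta}}/|\widehat{\zeta}|$, taking real parts, multiplying by $e^{\nu t |n|}$ and summing over $n$, the two linear dissipative terms combine with the time derivative of the weight to give, for $\nu \in \bigl[0, \tfrac{1}{2}(\tfrac{\sqrt{\mu}}{\mathrm{bo}} - \tfrac{\mu}{3})\bigr)$,
\begin{equation*}
\frac{d}{dt}|\zeta|_{0,\nu t} + \tfrac{1}{32}\!\left(\tfrac{\sqrt{\mu}}{\mathrm{bo}} - \tfrac{\mu}{3}\right)\!|\zeta|_{4,\nu t} \leq |\mathrm{NL}_2(\zeta)|_{0,\nu t} + |\mathrm{NL}_3(\zeta)|_{0,\nu t} + |\mathrm{NL}_4(\zeta)|_{0,\nu t},
\end{equation*}
after using \eqref{equiv-norms} and discarding the nonnegative contribution coming from the $|n|^2$ term (which in the stable case is an extra gain rather than a loss).

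Next, I insert the analogues of \eqref{NL2_est}--\eqref{NL4_est} into the right-hand side. Since each nonlinear estimate produces a prefactor of the form $|\zeta|_{0,\nu t}|\zeta|_{4,\nu t}$, and since by $\varepsilon|\zeta|_{0,\nu t}<1$ (itself implied by the smallness condition) the cubic and quartic contributions are absorbed by the quadratic one, the total nonlinear bound takes the same shape as in the proof of Theorem \ref{theoO(mu)}, namely
\begin{equation*}
2\eps\!\left(\tfrac{\sqrt{\mu}}{\mathrm{bo}} + \tfrac{55}{6}\mu + 1\right)\!|\zeta|_{0,\nu t}|\zeta|_{4,\nu t}.
\end{equation*}
The smallness assumption \eqref{smallnessTF_ref_stable} on $|\zeta_0|_{0,0}$ is precisely what is needed to ensure that the nonlinear term is dominated by half of the dissipation, leading to
\begin{equation*}
\frac{d}{dt}|\zeta|_{0,\nu t} + \tfrac{1}{64}\!\left(\tfrac{\sqrt{\mu}}{\mathrm{bo}} - \tfrac{\mu}{3}\right)\!|\zeta|_{4,\nu t} \leq 0.
\end{equation*}
A continuity/bootstrap argument then propagates the smallness of $|\zeta(t)|_{0,\nu t}$ for all time and in particular excludes pinch-off (since $1+\eps\zeta(t)>1-\tfrac{1}{128}>0$), making the a priori estimate global. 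Integration in time yields \eqref{energy_estimate_ref_stable}, and since $\mathbb{A}^4_{\nu t}\subset \mathbb{A}^0_{\nu t}$ a further application of Gronwall's inequality produces the exponential decay \eqref{decay_ref_stable}.

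The only real subtlety—and the reason this statement is actually easier than Theorem \ref{theoO(mu)}—is the absence of a lower bound on $\mu$: since both $|n|^2$ and $(\tfrac{\sqrt{\mu}}{\mathrm{bo}}-\tfrac{\mu}{3})|n|^4$ are now dissipative, one only needs the fourth-order coefficient to be positive, i.e. $\tfrac{\sqrt{\mu}}{\mathrm{bo}} > \tfrac{\mu}{3}$, which for fixed $\mathrm{bo}>0$ is a genuine upper bound $\mu < (3/\mathrm{bo})^2$ rather than a lower bound. Thus the main work is purely bookkeeping of the nonlinear Wiener estimates, and no new analytic obstacle arises beyond what was already handled in Theorems \ref{theoTF} and \ref{theoO(mu)}.
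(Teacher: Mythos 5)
Your proposal is correct and follows essentially the same route as the paper: the paper's own argument for Theorem \ref{theoO(mu)stable} consists precisely of writing \eqref{O(mu)stable} in Fourier variables, exploiting the now-favorable sign of the $|n|^2$ term, choosing $\nu\in\bigl[0,\tfrac{1}{2}(\tfrac{\sqrt{\mu}}{\mathrm{bo}}-\tfrac{\mu}{3})\bigr)$, reusing the nonlinear Wiener bounds \eqref{NL2_est}--\eqref{NL4_est} and the absorption under the smallness condition exactly as in Theorem \ref{theoO(mu)}, and concluding via the standard approximation argument and Gronwall. Your closing remark that the positivity requirement $\tfrac{\sqrt{\mu}}{\mathrm{bo}}>\tfrac{\mu}{3}$ is an upper rather than a lower bound on $\mu$ matches the paper's point about the stable case.
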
\vspace{0.5em}
Finally, we obtain a rigorous justification of the refined thin film approximation in the stable case without any lower bound for $\mu$:
\begin{theorem}\label{conv_ref_theo_stable}
Let $\mathrm{bo>0}$ and $\zeta_{\mathrm{in},\mu}$ be a zero-mean function in $\mathbb{A}^3_0$ satisfying the smallness assumption \eqref{smallness}. Let $\zeta^{\rm MU}_\mu$  and $\zeta^{\rm app}_{\rm ref}$ be the global weak solutions respectively to \eqref{heleshaw_dimless} and to \eqref{O(mu)stable} both with initial data $\zeta_{\mathrm{in},\mu}.$ Then, for any $T>0$ there exist $C>0$ independent of $\mu$ and $\mu_1\in(0,1)$ such that for $\mu\in (0,\mu_1)$
\begin{equation}\label{stability_ref_stable}
\|\zeta^{\rm MU}_\mu -\zeta^{\rm app}_{\rm ref}\|_{L^\infty([0,T]; \ \mathbb{A}^0_{\nu t})}+ \tfrac{1}{64}\left(\tfrac{\sqrt{\mu}}{\mathrm{bo}} -\tfrac{\mu}{3}\right)	\|\zeta^{\rm MU}_\mu -\zeta^{\rm app}_{\rm ref}\|_{L^1([0,T]; \ \mathbb{A}^4_{\nu t})}\leq  C\mu^{3/2}.
\end{equation}
\end{theorem}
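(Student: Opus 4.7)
The plan is to mirror the proof of Theorem \ref{conv_ref_theo} almost verbatim, exploiting the favourable sign of the gravity term in \eqref{O(mu)stable}. First, I would set $f := \zeta^{\rm MU}_\mu - \zeta^{\rm app}_{\rm ref}$ (with $\zeta^{\rm MU}_\mu$ understood as the solution of the corresponding stable Muskat equation, obtained from \eqref{HSeq_dimless} after switching the sign of the linear gravity contribution, so that the linear gravity terms cancel pairwise in the subtraction) and derive its evolution equation by subtracting \eqref{O(mu)stable} from that Muskat equation. The result has the same structural form as \eqref{eq_diff_ref}, except that the linear part of the difference equation reads
\begin{equation*}
\partial_t f + \left(\tfrac{\sqrt{\mu}}{\mathrm{bo}} - \tfrac{\mu}{3}\right)\partial_{xxxx} f - \partial_{xx} f = \text{(nonlinear in $f$, $\zeta^{\rm app}_{\rm ref}$)} - \tfrac{1}{\eps}N(\zeta^{\rm MU}_\mu, \widetilde{\phi}^\mu_{\rm ref}),
\end{equation*}
whose Fourier symbol $(\sqrt{\mu}/\mathrm{bo} - \mu/3)|n|^4 + |n|^2$ is strictly positive on every nonzero mode as soon as $\mu < \mu_1 := (3/\mathrm{bo})^2$, with no lower bound on $\mu$ required.

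Next, I would run the Wiener-norm energy estimate exactly as in Theorem \ref{theoO(mu)stable}: choose $\nu \in \bigl[0, \tfrac{1}{2}(\sqrt{\mu}/\mathrm{bo} - \mu/3)\bigr)$, multiply the Fourier form of the equation by $e^{\nu t|n|}$ and sum over $n$ to obtain
\begin{equation*}
\frac{d}{dt}|f|_{0,\nu t} + \tfrac{1}{32}\left(\tfrac{\sqrt{\mu}}{\mathrm{bo}} - \tfrac{\mu}{3}\right)|f|_{4,\nu t} \leq \tfrac{\eps\sqrt{\mu}}{\mathrm{bo}}|N_1|_{0,\nu t} + \eps|N_2|_{0,\nu t} + \eps\mu|N_3|_{0,\nu t} + \tfrac{1}{\eps}|N(\zeta^{\rm MU}_\mu,\widetilde{\phi}^\mu_{\rm ref})|_{0,\nu t}.
\end{equation*}
The controls furnished by Proposition \ref{N1N2_prop}, the estimate \eqref{N3_est} (with the obvious sign adjustment inside $N_3$), and the elliptic-remainder estimate of Proposition \ref{propNphimu_ref} all carry over verbatim, since none of them depends on the sign of the gravity term. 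I would then use the decay \eqref{decay_ref_stable} of Theorem \ref{theoO(mu)stable} together with the smallness assumption \eqref{smallness} to absorb the $|\zeta^{\rm app}_{\rm ref}|_{0,\nu t}|f|_{4,\nu t}$ contribution into the dissipative part on the left-hand side, and close by Gronwall's inequality using the uniform $L^1_t \mathbb{A}^4_{\nu t}$ bounds on both $\zeta^{\rm app}_{\rm ref}$ and $\zeta^{\rm MU}_\mu$, as well as the uniform $L^1_t$ bound on $R^\mu_{\rm ref}$ available from Proposition \ref{propNphimu_ref} when $\zeta_{\mathrm{in},\mu} \in \mathbb{A}^3_0$. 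Since $f(0) = 0$, the estimate \eqref{stability_ref_stable} follows after taking the supremum in $t$.

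The main conceptual point, rather than a genuine obstacle, is the sign analysis in the first step: with the stable configuration the anti-dissipative $-|n|^2$ of the unstable regime is replaced by a genuinely dissipative $+|n|^2$, so the algebraic constraint $\sqrt{\mu}/\mathrm{bo} + \mu/3 - 1 > 0$ forcing $\mu \geq \mu_0$ in Theorem \ref{conv_ref_theo} is replaced by the much weaker $\sqrt{\mu}/\mathrm{bo} > \mu/3$, permitting arbitrarily small $\mu$ up to $\mu_1$. The only technical item requiring care is to check that \eqref{smallness} implies simultaneously \eqref{smallnessTF_ref_stable} for Theorem \ref{theoO(mu)stable}, the hypothesis on $|\zeta|_{1,\lambda}$ in Proposition \ref{ell_est_ref}, and the $\mathbb{A}^3_0$ propagation needed to bound $R^\mu_{\rm ref}$ uniformly; this follows from the same chain of implications as in the unstable case, and in fact only becomes easier when $\mu$ is allowed to be small.
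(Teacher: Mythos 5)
Your proposal is correct and follows essentially the same route as the paper, which presents this theorem as a sign-adjusted repetition of the proof of Theorem \ref{conv_ref_theo} built on the stable energy structure of Theorem \ref{theoO(mu)stable}; your reading of $\zeta^{\rm MU}_\mu$ as the solution of the correspondingly stable Muskat problem (so that the linear gravity terms combine rather than obstruct) is indeed the intended one. In particular, replacing the lower bound $\mu_0$ of the unstable case by the upper bound $\mu_1$ stemming from $\tfrac{\sqrt{\mu}}{\mathrm{bo}}-\tfrac{\mu}{3}>0$, i.e. $\mu<\min\{1,9/\mathrm{bo}^2\}$, is exactly the point of the paper's argument.
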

 \vspace{1em}

\subsection*{Acknowledgments.} 		
E. Bocchi and F. Gancedo were partially supported by the ERC through the Starting Grant project H2020-EU.1.1.-639227, by the grant P20-00566 of Junta de Andalucía and by the grant EUR2020-112271 (Spain). F. Gancedo was partially supported by MINECO grant RED2018-102650-T (Spain). The authors would like to thank Peter Constantin and Rafael Granero-Belinchón for helpful discussions.

\bibliographystyle{siam}
\bibliography{references}

\begin{thebibliography}{10}

\bibitem{APW2023}
{\sc S.~Agrawal, N.~Patel, and S.~Wu}, {\em Rigidity of acute angled corners
  for one phase {M}uskat interfaces}, Adv. Math., 412 (2023), pp.~Paper No.
  108801, 71.

\bibitem{AlazardOneFluid2019}
{\sc T.~Alazard, N.~Meunier, and D.~Smets}, {\em Lyapunov functions, identities
  and the {C}auchy problem for the {H}ele-{S}haw equation}, Comm. Math. Phys.,
  377 (2020), pp.~1421--1459.

\bibitem{AlazardHung2023}
{\sc T.~Alazard and Q.-H. Nguyen}, {\em On the {C}auchy problem for the
  {M}uskat equation. ii: Critical initial data}, Comm. Math. Phys., 397 (2023),
  p.~1043–1102.

\bibitem{Ambrose2014}
{\sc D.~M. Ambrose}, {\em The zero surface tension limit of two-dimensional
  interfacial {D}arcy flow}, {J}. {M}ath. {F}luid {M}ech., 16 (2014),
  pp.~105--143.

\bibitem{AmbroBonaNich2014}
{\sc D.~M. Ambrose, J.~L. Bona, and D.~P. Nicholls}, {\em On ill-posedness of
  truncated series models for water waves}, Proc. R. Soc. Lond. Ser. A Math.
  Phys. Eng. Sci., 470 (2014), pp.~20130849, 16.

\bibitem{Bear1988}
{\sc J.~Bear}, {\em Dynamics of {F}luids in {P}orous {M}edia}, {D}over
  {P}ublications, 1988.

\bibitem{BBD-P1995}
{\sc E.~Beretta, M.~Bertsch, and R.~Dal~Passo}, {\em Nonnegative solutions of a
  fourth-order nonlinear degenerate parabolic equation}, Arch. Rational Mech.
  Anal., 129 (1995), pp.~175--200.

\bibitem{BF1990}
{\sc F.~Bernis and A.~Friedman}, {\em Higher order nonlinear degenerate
  parabolic equations}, J. Differential Equations, 83 (1990), pp.~179--206.

\bibitem{BP1996}
{\sc A.~L. Bertozzi and M.~Pugh}, {\em The lubrication approximation for thin
  viscous films: regularity and long-time behavior of weak solutions}, Comm.
  Pure Appl. Math., 49 (1996), pp.~85--123.

\bibitem{Boc20}
{\sc E.~Bocchi}, {\em Floating structures in shallow water: local
  well-posedness in the axisymmetric case}, SIAM J. Math. Anal., 52 (2020),
  pp.~306--339.

\bibitem{BG-B2019}
{\sc G.~Bruell and R.~Granero-Belinch\'{o}n}, {\em On the {T}hin {F}ilm
  {M}uskat and the {T}hin {F}ilm {S}tokes {E}quations}, J. Math. Fluid Mech.,
  21 (2019), p.~21:33.

\bibitem{CCFG2016}
{\sc A.~Castro, D.~C\'{o}rdoba, C.~Fefferman, and F.~Gancedo}, {\em Splash
  singularities for the one-phase {M}uskat problem in stable regimes}, Arch.
  Ration. Mech. Anal., 222 (2016), pp.~213--243.

\bibitem{CCFGL-F2012}
{\sc A.~Castro, D.~C\'{o}rdoba, C.~Fefferman, F.~Gancedo, and
  M.~L\'{o}pez-Fern\'{a}ndez}, {\em Rayleigh-{T}aylor breakdown for the
  {M}uskat problem with applications to water waves}, Ann. of Math. (2), 175
  (2012), pp.~909--948.

\bibitem{Chen1993}
{\sc X.~Chen}, {\em The {H}ele-{S}haw problem and area-preserving
  curve-shortening motions}, Arch. Rational Mech. Anal., 123 (1993),
  pp.~117--151.

\bibitem{Con1993}
{\sc P.~Constantin, T.~F. Dupont, R.~E. Goldstein, L.~P. Kadanoff, M.~J.
  Shelley, and S.-M. Zhou}, {\em Droplet breakup in a model of the hele-shaw
  cell}, Phys. Rev. E, 47 (1993), pp.~4169--4181.

\bibitem{Con2018}
{\sc P.~Constantin, T.~Elgindi, H.~Nguyen, and V.~Vicol}, {\em On singularity
  formation in a {H}ele-{S}haw model}, Comm. Math. Phys., 363 (2018),
  pp.~139--171.

\bibitem{ConstantinPugh1993}
{\sc P.~Constantin and M.~Pugh}, {\em Global solutions for small data to the
  {H}ele-{S}haw problem}, Nonlinearity, 6 (1993), pp.~393--415.

\bibitem{CordobaPernas-Castano2017}
{\sc D.~C\'{o}rdoba and T.~Pernas-Casta\~{n}o}, {\em Non-splat singularity for
  the one-phase {M}uskat problem}, Trans. Amer. Math. Soc., 369 (2017),
  pp.~711--754.

\bibitem{Darcy1856}
{\sc H.~{D}arcy}, {\em Les {F}ontaines {P}ubliques de la {V}ille de {D}ijon},
  Dalmont, {P}aris,  (1856).

\bibitem{DGN2021}
{\sc H.~Dong, F.~Gancedo, and H.~Q. Nguyen}, {\em Global well-posedness for the
  one-phase {M}uskat problem}, Prerpint arXiv:2103.02656, to appear in Comm.
  Pure Appl. Math.,  (2021).

\bibitem{DuchonRobert1984}
{\sc J.~Duchon and R.~Robert}, {\em \'{E}volution d'une interface par
  capillarit\'{e} et diffusion de volume. {I}. {E}xistence locale en temps},
  Ann. Inst. H. Poincar\'{e} Anal. Non Lin\'{e}aire, 1 (1984), pp.~361--378.

\bibitem{ELM2011}
{\sc J.~Escher, P.~Lauren\c{c}ot, and B.-V. Matioc}, {\em Existence and
  stability of weak solutions for a degenerate parabolic system modelling
  two-phase flows in porous media}, Ann. Inst. H. Poincar\'{e} Anal. Non
  Lin\'{e}aire, 28 (2011), pp.~583--598.

\bibitem{EMM2012}
{\sc J.~Escher, A.-V. Matioc, and B.-V. Matioc}, {\em Modelling and analysis of
  the {M}uskat problem for thin fluid layers}, J. Math. Fluid Mech., 14 (2012),
  pp.~267--277.

\bibitem{EscherMatioc2011}
{\sc J.~Escher and B.-V. Matioc}, {\em On the parabolicity of the {M}uskat
  problem: well-posedness, fingering, and stability results}, Z. Anal. Anwend.,
  30 (2011), pp.~193--218.

\bibitem{EM2013}
\leavevmode\vrule height 2pt depth -1.6pt width 23pt, {\em Existence and
  stability of solutions for a strongly coupled system modelling thin fluid
  films}, NoDEA Nonlinear Differential Equations Appl., 20 (2013),
  pp.~539--555.

\bibitem{EscherSimonett1997}
{\sc J.~Escher and G.~Simonett}, {\em Classical solutions for {H}ele-{S}haw
  models with surface tension}, Adv. Differential Equations, 2 (1997),
  pp.~619--642.

\bibitem{FlynnNguyec2021}
{\sc P.~T. Flynn and H.~Q. Nguyen}, {\em The vanishing surface tension limit of
  the {M}uskat problem}, Comm. Math. Phys., 382 (2021), pp.~1205--1241.

\bibitem{Sema2017}
{\sc F.~Gancedo}, {\em A survey for the {M}uskat problem and a new estimate},
  SeMA J., 74 (2017), pp.~21--35.

\bibitem{GG-JPS2019Bubble}
{\sc F.~Gancedo, E.~Garc\'{i}a-Ju\'{a}rez, N.~Patel, and R.~Strain}, {\em
  Global {R}egularity for {G}ravity {U}nstable {M}uskat {B}ubbles}, Preprint
  arXiv:1902.02318, to appear in Mem. Amer. Math. Soc.,  (2019).

\bibitem{GG-JPS2019}
{\sc F.~Gancedo, E.~Garc\'{i}a-Ju\'{a}rez, N.~Patel, and R.~M. Strain}, {\em On
  the {M}uskat problem with viscosity jump: {G}lobal in time results}, Adv.
  Math., 345 (2019), pp.~552--597.

\bibitem{GG-BS2020}
{\sc F.~Gancedo, R.~Granero-Belinch\'{o}n, and S.~Scrobogna}, {\em Surface
  tension stabilization of the {R}ayleigh-{T}aylor instability for a fluid
  layer in a porous medium}, Ann. Inst. H. Poincar\'{e} Anal. Non Lin\'{e}aire,
  37 (2020), pp.~1299--1343.

\bibitem{GancedoLazar2022}
{\sc F.~Gancedo and O.~Lazar}, {\em Global well-posedness for the three
  dimensional {M}uskat problem in the critical {S}obolev space}, Arch. Ration.
  Mech. Anal., 246 (2022), pp.~141--207.

\bibitem{GancedoStrain2014}
{\sc F.~Gancedo and R.~M. Strain}, {\em Absence of splash singularities for
  surface quasi-geostrophic sharp fronts and the {M}uskat problem}, Proc. Natl.
  Acad. Sci. USA, 111 (2014), pp.~635--639.

\bibitem{GHS2007}
{\sc Y.~Guo, C.~Hallstrom, and D.~Spirn}, {\em Dynamics near unstable,
  interfacial fluids}, Comm. Math. Phys., 270 (2007), pp.~635--689.

\bibitem{Kim2003}
{\sc I.~C. Kim}, {\em Uniqueness and existence results on the {H}ele-{S}haw and
  the {S}tefan problems}, Arch. Ration. Mech. Anal., 168 (2003), pp.~299--328.

\bibitem{Lan13}
{\sc D.~Lannes}, {\em The water waves problem}, vol.~188 of Mathematical
  Surveys and Monographs, American Mathematical Society, Providence, RI, 2013.
\newblock Mathematical analysis and asymptotics.

\bibitem{Lan17}
\leavevmode\vrule height 2pt depth -1.6pt width 23pt, {\em On the dynamics of
  floating structures}, Ann. PDE, 3 (2017), pp.~Paper No. 11, 81.

\bibitem{LM2013}
{\sc P.~Lauren\c{c}ot and B.-V. Matioc}, {\em A gradient flow approach to a
  thin film approximation of the {M}uskat problem}, Calc. Var. Partial
  Differential Equations, 47 (2013), pp.~319--341.

\bibitem{LM2014}
\leavevmode\vrule height 2pt depth -1.6pt width 23pt, {\em A thin film
  approximation of the {M}uskat problem with gravity and capillary forces}, J.
  Math. Soc. Japan, 66 (2014), pp.~1043--1071.

\bibitem{Mat2012}
{\sc B.-V. Matioc}, {\em Non-negative global weak solutions for a degenerate
  parabolic system modelling thin films driven by capillarity}, Proc. Roy. Soc.
  Edinburgh Sect. A, 142 (2012), pp.~1071--1085.

\bibitem{MP2012}
{\sc B.-V. Matioc and G.~Prokert}, {\em Hele-{S}haw flow in thin threads: a
  rigorous limit result}, Interfaces Free Bound., 14 (2012), pp.~205--230.

\bibitem{Muskat}
{\sc M.~Muskat}, {\em The flow of homogeneous fluids through porous media},
  {S}oil {S}cience, 46 (1938), p.~169.

\bibitem{HQNguyen2019}
{\sc H.~Q. Nguyen}, {\em On well-posedness of the {M}uskat problem with surface
  tension}, Adv. Math., 374 (2020), pp.~107344, 35.

\bibitem{Otto1997}
{\sc F.~Otto}, {\em Viscous fingering: an optimal bound on the growth rate of
  the mixing zone}, SIAM J. Appl. Math., 57 (1997), pp.~982--990.

\bibitem{SaffmanTaylor1958}
{\sc P.~G. Saffman and G.~Taylor}, {\em The penetration of a fluid into a
  porous medium or {H}ele-{S}haw cell containing a more viscous liquid}, Proc.
  Roy. Soc. London. Ser. A, 245 (1958), pp.~312--329.

\bibitem{YeTanveer2011}
{\sc J.~Ye and S.~Tanveer}, {\em Global existence for a translating
  near-circular {H}ele-{S}haw bubble with surface tension}, SIAM J. Math.
  Anal., 43 (2011), pp.~457--506.

\end{thebibliography}
	
	\vspace{.1in}

\begin{tabular}{l}
	\textbf{Edoardo Bocchi} \\
	{\small Departamento de An\'alisis Matem\'atico \& IMUS}\\
	{\small Universidad de Sevilla} \\ 
	{\small C/ Tarfia, s/n}\\ 
	{\small Campus Reina Mercedes, 41012, Sevilla, Spain} \\ 
	{\small Email: ebocchi@us.es}
\end{tabular}

	\vspace{.1in}

\begin{tabular}{l}
	\textbf{Francisco Gancedo} \\
	{\small Departamento de An\'alisis Matem\'atico \& IMUS}\\
	{\small Universidad de Sevilla} \\ 
	{\small C/ Tarfia, s/n}\\ 
	{\small Campus Reina Mercedes, 41012, Sevilla, Spain} \\ 
	{\small Email: fgancedo@us.es}
\end{tabular}

\end{document}